\newtheorem{theorem}{Theorem}[section]
\newtheorem{proposition}[theorem]{Proposition}
\newtheorem{lemma}[theorem]{Lemma}
\newtheorem{claim}[theorem]{Claim}
\newtheorem{corollary}[theorem]{Corollary}
\newtheorem{definition}[theorem]{Definition}
\newtheorem{property}[theorem]{Property}
\newtheorem{fact}[theorem]{Fact}
\theoremstyle{remark}
\newtheorem{remark}{Remark}[section]
\newtheorem{convention}{Convention}
\newtheorem*{introExample*}{Introductory Example}
\newtheorem{example}{Example}
\newtheorem*{example*}{Example}
\crefname{theorem}{Theorem}{Theorems}
\crefname{lemma}{Lemma}{Lemmas}
\definecolor{darkblue}{rgb}{0,0,0.7} 
\newcommand{\pct}{\preceq}
\newcommand{\cO}{\ensuremath{\mathcal{O}}}
\newcommand{\undir}[1]{\bar{#1}}
\renewcommand{\v}{\mathrm{v}}
\newcommand{\Eup}{E^{\uparrow}\!}
\newcommand{\Edo}{E^{\downarrow}\kern-.5mm}
\newcommand{\lamZ}{\lambda^{\!0}}
\newcommand{\lamU}{\lambda^{\!1}}
\newcommand{\ptdeux}{\scriptscriptstyle{(2)}}
\DeclareMathOperator{\bc}{\mathcal{BC}}
\DeclareMathOperator{\us}{us}
\renewcommand{\u}{\ensuremath{\mathfrak{u}}}
\newcommand{\m}{\ensuremath{\mathfrak{m}}}
\renewcommand{\o}{\ensuremath{\mathfrak{o}}}
\newcommand{\s}{\ensuremath{\mathfrak{s}}}
\renewcommand{\t}{\ensuremath{\mathfrak{t}}}
\renewcommand{\r}{\ensuremath{\mathfrak{r}}}
\renewcommand{\c}{\ensuremath{\mathfrak{c}}}
\newcommand{\w}{\ensuremath{\mathrm{w}}}
\renewcommand{\l}{\ensuremath{\mathfrak{l}}}
\newcommand{\cT}{\ensuremath{\mathcal{T}}}
\newcommand{\cL}{\ensuremath{\mathcal{L}}}
\newcommand{\cR}{\ensuremath{\mathcal{R}}}
\newcommand{\yy}{\scalebox{0.6}{\LEFTcircle}}
\title[Combinatorial proof of the rationality of the bivariate enumeration of maps]{Combinatorial proof for the rationality of the bivariate generating series of maps in positive genus}
\author{Marie Albenque}
\address{IRIF UMR8243, Université Paris Cité, 75013 Paris, France}
\email{marie.albenque@irif.fr}
\urladdr{\url{https://www.irif.fr/~malbenque/}}
\author{Mathias Lepoutre}
\address{LIX UMR7161, \'Ecole Polytechnique, 91120 Palaiseau, France}
\email{mathias.lepoutre@lix.polytechnique.fr}
\urladdr{\url{http://www.lix.polytechnique.fr/Labo/Mathias.Lepoutre/}}
\begin{document}

\begin{abstract}
In this paper, we give the first combinatorial proof of a rationality scheme for the generating series of maps in positive genus enumerated by both vertices and faces. This scheme was first obtained by Bender, Canfield and Richmond in 1993 by purely computational techniques. 

To do so, we rely on a bijection obtained by the second author in a previous work between those maps and a family of decorated unicellular maps. Our main contribution consists in a fine analysis of the family of unicellular maps. As a byproduct, we also obtain a new and simpler combinatorial proof of the rationality scheme for the generating series of maps enumerated by their number of edges, originally obtained computationally by Bender and Canfield in 1991 and combinatorially by the second author in 2019. 
\end{abstract}

\maketitle

\section{Introduction}
This article deals with the enumerative properties of maps of genus $g$.
Informally, a \emph{map} of genus $g$ is a proper embedding of a graph on a $g$-hole torus (precise definitions of all the terminology appearing in the introduction will be given in Section~\ref{sec:def}).
Maps of genus 0 -- also called \emph{planar maps} --  were first studied by Tutte in the 60's. In a series of papers, he developed a robust method that allowed him to obtain many enumerative results, through the resolution of generating series obtained by a combinatorial decomposition \cite{Tut62,Tut63}. 
Since then, the enumeration of maps has been studied extensively in various fields of mathematics and mathematical physics; other classical techniques to enumerate maps include matrix integrals, topological recursion and bijection with decorated trees (see for instance \cite{LanZvo04}, \cite{Eyna16}, \cite{Scha15} and references therein). 

The first enumerative results for maps on surfaces of positive genus were obtained by Lehman and Walsh~\cite{WalshLehmanI,WalshLehmanII}, who gave expressions for the generating series of maps with fixed excess (the excess is the difference between the number of edges and the number of vertices minus~1). Then, Bender and Canfield combined the method developed by Tutte together with some fine manipulation of recurrence relations on the generating functions to obtain first the asymptotic behavior of maps of a given genus enumerated by edges \cite{BenCan86}. For $g\geq 0$, denote by $\mathcal{M}_g$ the set of rooted maps of genus $g$ and denote $M_g(z)$ their following generating series: 
\begin{equation}
 M_g(z)=\sum_{\m\in \mathcal{M}_g}z^{|E(\m)|}, \quad\text{ where }E(\m)=\{\text{edges of }\m\}.
\end{equation}
Then, in~\cite{BenCan91}, they refined their result to obtain closed formulas for $M_1(z)$, $M_2(z)$ and $M_3(z)$. But, more importantly, they exhibited the following \emph{rationality scheme}: 
\begin{theorem}[Tutte~\cite{Tut63} for $g=0$, Bender and Canfield~\cite{BenCan91} for $g\geq 1$]\label{thm:BC91}
Let $T(z)$ be the unique formal power series in $z$ that satisfies $T(z)=z+3T^2(z)$ and $T(0)=0$. Then, for any $g\geq 0$, $M_g(z)$ is a rational function of $T(z)$. 
\end{theorem}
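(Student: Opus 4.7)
The plan is to prove this combinatorially, building on the bijection obtained by the second author (alluded to in the abstract) which identifies $\mathcal{M}_g$ with a family of \emph{decorated unicellular maps} of genus $g$ having the same number of edges. This reduces the theorem to showing that the generating series of this family, by edges, is a rational function of $T(z)$.

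The first step would be the classical scheme decomposition of unicellular maps: starting from a unicellular map of genus $g$, iteratively deleting degree-one vertices and smoothing degree-two vertices produces its \emph{scheme}, a unicellular map of genus $g$ in which every vertex has degree at least three. By Euler's formula the scheme has $O(g)$ vertices and edges, so there are only finitely many schemes of genus $g$. Conversely, any unicellular map is reconstructed from its scheme by (i) subdividing each scheme edge into a path of edges (a \emph{chain}) of length $\geq 1$, and (ii) attaching a rooted planar tree in each resulting corner.

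Next, I would argue that in the decorated setting, the rooted planar trees attached at corners are enumerated precisely by $T(z)$: each such tree is either a single leaf-edge (contributing $z$), or else decomposes at its root vertex into two subtrees with one of three possible decoration choices (contributing $3T^2$), which matches exactly the defining equation $T = z + 3T^2$. Consequently, the contribution of the corner trees is a power of $T$ whose exponent equals the total number of corners of the chain-subdivided scheme.

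The heart of the proof, and its main obstacle, will be the third step: analyzing the decorations along each chain. The plan is to show that these decorations are governed by a finite automaton, equivalently a finite transfer matrix, so that the generating series of a single decorated chain is rational in $T$. Summing over chain lengths and decoration configurations, then over the finitely many schemes of genus $g$, will express $M_g(z)$ as a rational function of $T$. Identifying the local decoration rules induced by the bijection and verifying that they admit such a finite-state description requires a careful case analysis -- this is precisely the \emph{fine analysis} promised in the abstract, and is also what distinguishes the present argument from the earlier combinatorial proof of the second author. Once this finite-memory structure is in hand, the rationality follows from standard transfer-matrix manipulations.
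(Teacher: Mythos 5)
Your overall architecture---bijection to decorated unicellular maps, scheme decomposition into finitely many schemes of genus $g$, trees enumerated by $T$ grafted on, and an analysis of the decorations along the chains replacing scheme edges---is indeed the skeleton of the paper's argument. But the proposal contains a gap at precisely the point you flag as ``the heart of the proof,'' and it also skips a preliminary obstruction.

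First, the bijection of \cite{Lep19} produces \emph{well-rooted} decorated unicellular maps, and well-rootedness is not preserved when you forget the root to pass to the scheme decomposition. So before any scheme decomposition you must handle a rerooting step. In the paper this is the role of Theorem~\ref{thm:shortcut} (built on the $2$-to-$1$ map of Proposition~\ref{prop:2to1}): counting good maps with a marked scheme trunk relates $O^\times_{\bar\r}$ to the generating series $R_{\bar\r}$ of scheme-rooted cores, with a combinatorial normalization factor $2g-|\mathring n_2(\bar\r)|$. Your proposal never mentions this step, and without it you cannot even set up the scheme decomposition as an exact enumeration.

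Second, and more seriously, your transfer-matrix claim is false as stated. A single chain between two scheme vertices at heights $i$ and $j$ is encoded by a Motzkin walk of unconstrained length with prescribed endpoints; its generating series is $B\cdot D^{|i-j|}$, where $B$ and $D$ are the Motzkin bridge and primitive Motzkin walk series (Lemma~\ref{lem:decompCore}). These are \emph{algebraic} in $t$, not rational, so the contribution of a single chain is not a rational function of $T$, and a finite automaton by itself will never produce one. The key insight of the paper---which is invisible in your plan---is to change variables from $t$ to $D$: then $t$ and $B$ are rational in $D$, so the core series $R_\l$ for each labeled scheme is rational in $D$. But you then sum over the \emph{infinitely many} labeled schemes $\l$ sharing a fixed unlabeled scheme (the scheme-vertex heights are unbounded), and rationality in $D$ is only upgraded to rationality in $T$ via the criterion of~\cite{ChMaSc09} (Lemma~\ref{lem:criterion}): one also needs the nontrivial symmetry $F(D)=F(1/D)$. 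Establishing this symmetry is the genuinely hard step; the paper proves it via the mirror operation on height orders relative to a consistent naming (Theorem~\ref{lem:uniSymMir}, using Lemma~\ref{lem:uniEnumOrdered} and the acyclicity of the offset graph from~\cite{Lep19}). Your proposal has no mechanism to produce this symmetry, and ``standard transfer-matrix manipulations'' will not discover it.
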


Though the number of maps with a fixed genus and a fixed number of edges is finite, this is not the case for maps with a fixed genus and a fixed number of vertices (or a fixed number of faces). But, in view of Euler's formula, which states that for any map $\m$ of genus $g$: 
\[
    |V(\m)|+|F(\m)|=2-2g+|E(\m)|, 
\]
(where $F(\m)$ and $V(\m)$ denote respectively the set of faces and of vertices of $\m$), a natural way to refine the result of Bender and Canfield is to count maps by both their number of vertices and their number of faces. Hence, for any fixed $g\geq 1$, we denote by $M_g(z_\bullet, z_\circ)$ to be the generating series of rooted maps of genus $g$ enumerated by both their vertices and faces, i.e.: 
\[
   M_g(z_\bullet, z_\circ)=\sum_{\m\in \mathcal{M}_g}z_\bullet^{|V(\m)|}z_{\circ}^{|F(\m)|}. 
\]  
Arquès obtained the first results in this direction and gave explicit formulae for $M_0$ and $M_1$ in~\cite{Arq85,Arq87}. Then, in~\cite{BeCaRi93}, Bender, Canfield and Richmond refined Theorem~\ref{thm:BC91} and obtained the following \emph{bivariate rationality scheme}:
\begin{theorem}[Arquès~\cite{Arq85,Arq87} for $g=0,1$, Bender, Canfield and Richmond \cite{BeCaRi93} for $g\geq 2$]\label{thm:bivExprRat} Define\\ $T_{\bullet}(z_\bullet, z_\circ)$ and $T_{\circ}(z_\bullet, z_\circ)$ as the unique formal power series with constant term equal to 0 and that satisfy
\begin{equation}\label{eq:defTnoirTblanc}
\begin{cases}
T_\bullet &= z_\bullet +T_\bullet^2 + 2T_\circ T_\bullet\\ T_\circ&=z_{\circ}+T_{\circ}^2+2T_\circ T_{\bullet}
\end{cases}
\end{equation}
Then \emph{$M_g(z_\bullet, z_\circ)$ {is a rational function of} $T_\bullet$ and $T_\circ$}. 
\end{theorem}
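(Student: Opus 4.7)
The plan is to follow the strategy outlined in the abstract: reduce the bivariate enumeration of maps of genus $g$ to that of a family of \emph{decorated unicellular maps} via the bijection established by the second author in a previous paper, and then perform a refined analysis of these unicellular maps. The key advantage of unicellular maps (i.e.\ maps with a single face) is their intrinsically tree-like nature: every unicellular map of genus $g$ admits a canonical decomposition (in the spirit of Chapuy--Marcus--Schaeffer) into a finite \emph{scheme}---obtained by iteratively deleting vertices of degree $1$ and suppressing those of degree $2$---together with chains and planar trees attached along its edges and corners. Since, for fixed $g$, the set of possible schemes is finite, this is the structural finiteness that will ultimately underlie rationality.

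The second step is to give a combinatorial meaning to the series $T_\bullet$ and $T_\circ$. Rewriting the system~\eqref{eq:defTnoirTblanc} as
\[
T_\bullet \;=\; \frac{z_\bullet}{1 - T_\bullet - 2T_\circ}, \qquad T_\circ \;=\; \frac{z_\circ}{1 - T_\circ - 2T_\bullet},
\]
makes it natural to interpret $T_\bullet$ and $T_\circ$ as the generating series of two families of bicolored planar trees, with $z_\bullet$ and $z_\circ$ marking root vertices and root faces respectively. Under the specialisation $z_\bullet=z_\circ=z$, the symmetry of the system forces $T_\bullet=T_\circ=T$ with $T=z+3T^2$, which is exactly Tutte's series of \Cref{thm:BC91}; this compatibility is reassuring. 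I would verify this identification by first carrying out the combinatorial decomposition of planar unicellular maps (i.e.\ trees) enumerated by vertices and faces, thereby recovering Arqu\`es' formula for $M_0(z_\bullet,z_\circ)$, and then using the same planar pieces as the atomic building blocks glued to schemes of higher genus.

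With these ingredients, the main step is to express the generating series of decorated unicellular maps of genus $g$ as a finite sum over schemes, each term being a rational function of $T_\bullet$ and $T_\circ$. Concretely, each edge of a scheme contributes a rational series obtained by summing over chain subdivisions and planar tree attachments, each corner contributes a factor in $T_\bullet$ or $T_\circ$, and the summation over decorations compatible with the scheme is performed explicitly. Combining the finitely many contributions yields the desired rational expression for $M_g(z_\bullet, z_\circ)$. As announced in the abstract, this strategy should also specialise cleanly (via $z_\bullet=z_\circ=z$) to give a new proof of \Cref{thm:BC91}.

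The principal obstacle will be the promised ``fine analysis'' of the decorations: the bijection of the second author equips each unicellular map with combinatorial data whose interaction with the vertex/face bipartition is delicate. One must verify that, for each fixed scheme, the sum over all decorations compatible with given tree attachments produces only finitely many distinct patterns, and that each evaluates to a genuine \emph{rational}---rather than merely algebraic---function of $T_\bullet$ and $T_\circ$. Controlling this decoration bookkeeping, including its compatibility with the $z_\bullet \leftrightarrow z_\circ$ duality exhibited by the system, is precisely what the abstract identifies as the core technical contribution of the paper.
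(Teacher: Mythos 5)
Your high-level skeleton matches the paper's: apply the radial construction, then the blossoming bijection of~\cite{Lep19} to reduce to decorated unicellular maps, decompose by scheme, and interpret $T_\bullet$ and $T_\circ$ as generating series of Schaeffer-type bicolored trees. The algebraic observations about the system~\eqref{eq:defTnoirTblanc} are correct, and you rightly flag the ``decoration bookkeeping'' as the crux. But the sketch leaves three genuine gaps, each of which requires an idea you have not supplied and which you cannot get for free from the CMS-style scheme decomposition.

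First, the rerooting obstruction. The bijection of~\cite{Lep19} lands on \emph{well-rooted} unicellular blossoming maps, and the standard pruning/scheme decomposition requires ``unrooting'' and rerooting at a scheme trunk. In the univariate setting this is benign, but here rerooting at a different rootable stem flips the parity of every corner label, hence swaps all black and white leaves. Your proposal never mentions this, yet it is exactly why the final answer is not $R_{\bar\r}(T_\bullet,T_\circ)$ but the symmetrized quantity $\bigl(R_{\bar\r}(T_\bullet,T_\circ)+R_{\bar\r}(T_\circ,T_\bullet)\bigr)/(2g-|\mathring n_2(\bar\r)|)$ (Theorem~\ref{thm:shortcut}, established via the 2-to-1 map of Proposition~\ref{prop:2to1}). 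Without this symmetrization the subsequent rationality argument fails.

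Second, the passage from rationality in auxiliary variables to rationality in $T_\bullet,T_\circ$. Once you encode branches of a core by typed Motzkin paths (Lemma~\ref{lem:decompCore}), the generating series of cores with a fixed \emph{labeled} scheme is indeed rational — but in the primitive-walk series $D_\bullet, D_\circ$, not in $t_\bullet, t_\circ$. The change of variables \eqref{eq:tbul}--\eqref{eq:tcir} expresses $t_\bullet, t_\circ$ as rational functions of $D_\bullet, D_\circ$, not the other way round; the inverse is algebraic. The paper's Lemma~\ref{lem:criterion} resolves this: a symmetric function is rational in $t_\bullet, t_\circ$ if and only if, viewed in $D_\bullet, D_\circ$, it is rational \emph{and} $\parallel$-symmetric (invariant under $D_\bullet, D_\circ \mapsto D_\bullet^{-1}, D_\circ^{-1}$). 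Your proposal treats rationality as an almost automatic consequence of finiteness, but the real mechanism is this extra symmetry, which is non-obvious and must be proven.

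Third, the finiteness you invoke is the wrong one. The set of \emph{unlabeled} schemes of genus $g$ is finite, but the set of \emph{labeled} schemes (with corner heights) is infinite, and the sum over them is an infinite geometric-type series. The paper groups labeled schemes by unlabeled scheme, a consistent naming $\nu$, and a height-ordering $\pi$ of the vertices, and proves the required $\parallel$-symmetry via a mirror operation $\pi \mapsto \overline\pi$ on these orderings (Theorem~\ref{thm:MirrorR}, proved first in the univariate case in Section~\ref{sec:uniSym} via Lemma~\ref{lem:uniEnumOrdered}, then in the bivariate case in Section~\ref{sec:bivSym} via binary bijections and truncations). This mirror/consistent-naming machinery, together with Theorem~\ref{thm:acyclicOffsetGraph} guaranteeing consistent namings exist, is the ``fine analysis'' you correctly guess must be done but do not perform. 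In short: the proposal correctly identifies the entry point and the obstacles' location, but does not contain the rerooting symmetrization, the $\parallel$-symmetry criterion, or the mirror argument that together constitute the actual proof.
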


Since the formal power series $T$, $T_{\circ}$ and $T_\bullet$ appearing in Theorems~\ref{thm:BC91} and~\ref{thm:bivExprRat} can naturally be interpreted as the generating series of some decorated trees, the rationality schemes obtained in Theorems~\ref{thm:BC91} and~\ref{thm:bivExprRat} call for a combinatorial explanation. By that, we mean a decomposition of rooted maps into simple objects -- enumerated by a rational generating function -- onto which are glued the trees enumerated by either $T$ or by $T_{\circ}$ and $T_\bullet$. 
\medskip

The quest for combinatorial proofs of enumerative results obtained for maps has already quite a long history. 
The first bijection between planar maps and some trees was obtained by Cori and Vauquelin in~\cite{CorVau81}. It gave as an enumerative corollary Tutte's enumeration formula for planar maps. But, the real breakthrough came with major results obtained by Schaeffer~\cite{Scha97,Scha98} who obtained two families of bijections between planar maps and some decorated trees. Since then, those bijections have been extended and generalized and it is now customary to call the first one a \emph{blossoming bijection} and the second one a \emph{mobile-type bijection} (a term first coined in~\cite{BDG04}). In a blossoming bijection, the decorated tree associated to a map is one of its spanning trees with some decorations, which permit to reconstruct its facial cycles. In a mobile-type bijection, the tree is not necessarily spanning and its vertices carry some integral labels which encode some metric properties of the map. In the original works of Schaeffer, it was also proved that both these bijections provide a combinatorial explanation of Tutte's enumeration formula. Moreover, it is noteworthy for our purpose that the bijection obtained in~\cite{Scha97} allows to get a combinatorial proof of the bivariate enumeration of planar maps (i.e. the case $g=0$ in Theorem~\ref{thm:bivExprRat}). Let us already unveil that the key bijection in this paper is a blossoming bijection.   
\smallskip

\begin{figure}[t]
  \centering
  \subcaptionbox{A (4-valent and bicolorable) map of genus 1,\label{subfig:Genus1Intro}}{\;\includegraphics[page=5,width=0.35\linewidth]{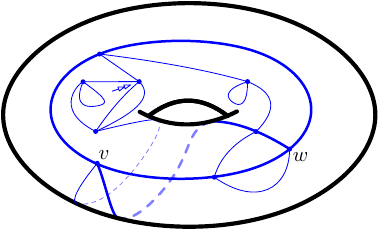}}\qquad\qquad\qquad
  \subcaptionbox{and its corresponding spanning decorated unicellular map.\label{subfig:BlossomingIntro}}{\;\includegraphics[page=6,width=0.35\linewidth]{images/Tore_HDR.pdf}}
 \caption{Illustration of the bijection of~\cite{Lep19} for a 4-valent map on a torus of genus 1.\\ We associate to a map (represented in~\subref{subfig:Genus1Intro}) one of its spanning unicellular submaps decorated by some opening and closing half-edges (respresented in~\subref{subfig:BlossomingIntro}). These half-edges can be matched as in a parenthis word to reconstruct the facial cycles.}
  \label{fig:ToreBij}
\end{figure}

To get combinatorial proofs of Theorems~\ref{thm:BC91} and~\ref{thm:bivExprRat}, it is hence natural to generalize those bijections to higher genus. The natural analogue of trees in higher genus being unicellular maps (i.e. maps with only one face), we hence aim at bijections between maps of genus $g$ and decorated \emph{unicellular maps of genus $g$}, rather than decorated plane trees. 

In \cite{ChMaSc09}, Chapuy, Marcus and Schaeffer extended the bijection of \cite{Scha98}, and thus obtained a mobile-type bijection for maps in higher genus. But, unfortunately, it did not give a combinatorial proof of the rationality scheme obtained in~\cite{BenCan91}, but only of a slightly weaker version of it.
In~\cite{Lep19}, the second author managed to extend Schaeffer's blossoming bijection of~\cite{Scha97} to any genus, see Figure~\ref{fig:ToreBij}. And, by analyzing the decorated unicellular maps obtained via this construction, he was able to give a fully combinatorial proof of Theorem~\ref{thm:BC91}. 
\medskip

The purpose of this article is to give a \emph{combinatorial proof} of \cref{thm:bivExprRat}, based on the same bijection. Let us first mention that (as in all the works aforementioned) we do not study directly general maps. Rather, we rely on the radial construction -- introduced by Tutte~\cite{Tut63} and recalled in Section~\ref{sub:radial} -- which gives a bijection between maps and bicolorable 4-valent maps (i.e. maps where all the vertices have degree 4 and that admit a proper coloring of their faces in black and white). It turns out that via this bijection, the number of vertices and of faces of the original map are sent to the number of black and white faces of its image. 

Two main difficulties arise in the bivariate analysis of the unicellular maps obtained by applying the bijection of~\cite{Lep19} to 4-valent bicolorable maps. First, an operation on unicellular maps (a.k.a. the rerooting procedure), which is critical to enumerate them, does not behave well when applied to a single map. But, we manage to prove that its effect is somehow balanced when applied to a whole subset of maps. This is the purpose of Theorem~\ref{thm:shortcut}. As far as we are aware, this type of results is not available for mobile-type bijections and explains why the bijection obtained in~\cite{Scha98} and its generalization to higher genus given in~\cite{ChMaSc09} fail to obtain bivariate enumerative corollary.

Secondly, the method developed in~\cite{Lep19} to analyze unicellular blossoming maps cannot be extended to the bivariate enumeration. We develop here a new method, by proving some refined rationality schemes for subsets of unicellular blossoming maps. As a corollary, we obtain a new (and easier) proof of the original result of~\cite{Lep19}. Let us be slightly more specific. The main reason why the bijection of~\cite{Lep19} yields a combinatorial proof of Theorem~\ref{thm:BC91} is the following. Define informally the scheme of a unicellular map as the map obtained by removing iteratively vertices of degree 1 and 2. Then, the generating series of unicellular blossoming maps obtained by the  bijection and with a prescribed scheme are also rational functions in terms of the series $T$ defined in Theorem~\ref{thm:BC91}. It turns out that an analogous statement also holds for the bivariate generating series of unicellular maps with a prescribed scheme, which is rational in terms of the series $T_\bullet$ and $T_\circ$ defined in Theorem~\ref{thm:bivExprRat}. What we prove precisely, which is stated in Theorem~\ref{thm:MirrorR}, is an even more refined rationality scheme for unicellular maps which share the same scheme and some other properties.

Our result hence confirms that, in higher genus, blossoming bijections produce the right ``elementary objects'', and demonstrates the robustness of blossoming bijections. Another illustration of their robustness is also its recent generalization to maps on non-orientable surfaces obtained by Maciej Do{\l}\k{e}ga and the second author in~\cite{DolegaLepoutre}, which relies for some parts on an earlier version of the present work which appears in the PhD thesis of the second author \cite[Chapter II.2]{LepoutrePhD}. 
\medskip

Let us conclude this introduction on a related note about the bivariate enumeration of maps. In~\cite{BeCaRi93}, the authors obtained in fact a more precise result that Theorem~\ref{thm:bivExprRat} only and gave a general formula for $M_g$ in terms of $T_\bullet$ and $T_\circ$. This formula was precise enough to deduce the asymptotic enumeration of maps of genus $g$. Their formula (further simplified by Arquès and Giorgetti in~\cite{ArqGio99}) is given below:
\begin{theorem}[\cite{BeCaRi93,ArqGio99}]\label{thm:bivExprFormula}
For any fixed $g\geq 1$, $M_g(z_\bullet, z_\circ)$ has the following form:
\begin{equation}
M_g(z_\bullet, z_\circ)=\frac{T_{\bullet}T_{\circ}(1-T_{\bullet}-T_{\circ})P_g(T_{\bullet},T_{\circ})}{\left((1-2T_{\bullet}-2T_{\circ})^2-4T_{\bullet}T_{\circ}\right)^{5g-3}},
\end{equation}
where $T_{\bullet}$ and $T_{\circ}$ are the generating series defined in \eqref{eq:defTnoirTblanc}, and $P_g(x,y)$ is a symmetric polynomial with integer coefficients, of total degree not greater than $6g-6$.
\end{theorem}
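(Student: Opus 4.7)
The plan is to refine the scheme-based analysis used to prove Theorem~\ref{thm:bivExprRat} so as to pin down the specific denominator, the numerator prefactor, the symmetry, integrality, and the degree bound on $P_g$. A direct computation shows that $D(T_\bullet,T_\circ) := (1-2T_\bullet-2T_\circ)^2 - 4T_\bullet T_\circ$ is the Jacobian determinant of the change of variables $(T_\bullet, T_\circ) \mapsto (z_\bullet, z_\circ)$ defined by $z_\bullet = T_\bullet(1-T_\bullet-2T_\circ)$ and $z_\circ = T_\circ(1-T_\circ-2T_\bullet)$ (immediate from \eqref{eq:defTnoirTblanc}). Hence $D$ is precisely the locus where this substitution degenerates, which identifies it as the natural denominator. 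Concretely, starting from Theorem~\ref{thm:MirrorR}, one would group unicellular blossoming maps by their scheme $\sigma$ (obtained by iteratively suppressing vertices of degree $1$ and $2$). Each scheme contributes a rational function whose denominator is a product of factors, one per "long branch" of variable length attached to $\sigma$; each such branch, after geometric summation on its length, produces a power of $D$ in the denominator.

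The first main task is to show that the total denominator exponent, maximized over schemes of genus $g$, equals exactly $5g-3$. Cubic schemes are the extremal case, and by Euler's formula and the degree constraint they have $4g-2$ vertices and $6g-3$ edges; one verifies that the corresponding denominator contribution is exactly $D^{5g-3}$. The symmetry $P_g(x,y)=P_g(y,x)$ is inherited from map duality, which exchanges vertices with faces, $z_\bullet$ with $z_\circ$, and hence (by uniqueness of the implicit solution of \eqref{eq:defTnoirTblanc}) $T_\bullet$ with $T_\circ$. The prefactor $T_\bullet T_\circ$ reflects the fact that every map has at least one vertex and one face; the factor $(1-T_\bullet-T_\circ)$ is more subtle and I would detect it by analyzing the behaviour at the locus $T_\bullet+T_\circ=1$, where distinct scheme contributions must conspire to create a simple zero. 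Integrality of the coefficients of $P_g$ then follows since $M_g$ has integer coefficients and $D$ is monic with constant term $1$.

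The degree bound $\deg P_g\leq 6g-6$ is, in my view, the main obstacle. A naive piece-by-piece estimate — summing the numerator degree of each scheme's contribution after clearing the common denominator $D^{5g-3}$ — yields a bound strictly larger than $6g-6$. To obtain the sharp bound, one must exhibit global algebraic cancellations between scheme contributions. I expect the cleanest route to combine two ingredients: first, a double induction on $g$ and on the combinatorial complexity of the scheme, exploiting that suppressing one degree-$3$ vertex of a cubic scheme reduces both the genus and the expected numerator degree by compatible amounts; second, the self-dual structure of the enumeration, since $P_g(T_\bullet,T_\circ) = P_g(T_\circ,T_\bullet)$ forces the top-degree terms in $T_\bullet$ and $T_\circ$ to coincide after the swap, ruling out many candidate high-degree monomials and letting one propagate the degree bound inductively.
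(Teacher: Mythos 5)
The paper does not actually prove Theorem~\ref{thm:bivExprFormula}: it is a cited result of Bender--Canfield--Richmond~\cite{BeCaRi93}, simplified by Arqu\`es--Giorgetti~\cite{ArqGio99}, included in the introduction for context. The paper's own contribution is the weaker Theorem~\ref{thm:bivExprRat} (mere rationality of $M_g$ in $T_\bullet,T_\circ$), established via Theorems~\ref{thm:shortcut}, \ref{thm:MirrorR} and~\ref{thm:ratScheme}. So there is no internal proof to compare your attempt against; you are proposing a new argument on top of the paper's machinery.

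As such a proposal, it is a plan rather than a proof, and the hard parts are precisely the parts left open. The Jacobian observation is correct and a nice piece of motivation --- one does have $z_\bullet = T_\bullet(1-T_\bullet-2T_\circ)$, $z_\circ = T_\circ(1-T_\circ-2T_\bullet)$, and the Jacobian determinant is indeed $(1-2T_\bullet-2T_\circ)^2-4T_\bullet T_\circ$ --- but this explains why $D$ should be involved without bounding the exponent. In the paper's machinery the scheme-level denominators (Lemmas~\ref{lem:decompCore} and~\ref{lem:bivEnumOrdered}) come as products of factors $1-(D_\circ D_\bullet)^{\mathtt{C}^{\pi,k}_\s}$ together with powers of $1-D_\bullet D_\circ$ from $B$, all expressed in the Motzkin variables $D_\bullet,D_\circ$ rather than in $T_\bullet,T_\circ$. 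The route from $R_{\s,\nu}^\pi$ back to an expression with denominator a power of $D(T_\bullet,T_\circ)$ passes through Lemma~\ref{lem:criterion}, where a symmetric $\parallel$-symmetric rational function of $D_\bullet,D_\circ$ is re-expressed as a rational function of $t_\bullet^{-1},t_\circ^{-1}$; your claim that ``each branch produces a power of $D$ after geometric summation'' is not what this chain of substitutions produces directly, and the bookkeeping of the final denominator power across all schemes (including non-cubic ones, and including the clearing of factors like $1-D_\bullet D_\circ$) is exactly the missing step. Identifying $5g-3$ as the exponent for cubic schemes (with the correct counts $6g-3$ edges, $4g-2$ vertices) is plausible but unproved, and you would also need to show that lower-complexity schemes never demand a higher power.

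Beyond the denominator, the argument for the factor $1-T_\bullet-T_\circ$ is purely heuristic (``distinct scheme contributions must conspire''), and you explicitly concede the degree bound $\deg P_g\leq 6g-6$ is ``the main obstacle'' and offer only a speculative double induction. The integrality step is also too quick: since $z_\bullet,z_\circ$ are integer polynomials in $T_\bullet,T_\circ$, $M_g$ does have integer coefficients as a power series in $T_\bullet,T_\circ$, but to deduce that $P_g$ has integer coefficients you also need to know that $M_g\cdot D^{5g-3}$ is, as a polynomial, divisible by $T_\bullet T_\circ(1-T_\bullet-T_\circ)$ --- divisibility that you have not established. In short: correct target, plausible skeleton, but the prefactor, the exponent $5g-3$, and above all the $6g-6$ degree bound all require genuinely new work not supplied here and not supplied by the paper either.
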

A much more recent result about the bivariate enumeration of maps was given by Carrell and Chapuy~\cite{CarCha15}. Using the so-called KP-hierarchy, they obtain very nice recurrence relations for the number of maps with given genus and given number of vertices and faces, which (taking as an input the rationality result of Theorem~\ref{thm:BC91}) allows them to compute very efficiently the coefficients of the polynomial $P_g$ appearing in~\cref{thm:bivExprFormula}. This work has then been extended also to non-orientable maps, using this time the BKP hierarchy, by Bonzom, Chapuy and Do{\l}\k{e}ga in~\cite{BonzomChapuyDolega}.
\medskip

\textbf{Organization of the article:}
\cref{sec:def} is dedicated to introducing the terminology. We define the necessary material about bicolorable maps, the radial construction and orientations.  In \cref{sec:bijection}, we describe the closure operation, and prove in Theorem~\ref{thm:shortcut} that we can carry out the bivariate enumeration of unicellular blossoming maps even after some rerooting steps. In \cref{sec:motzkin}, we study the bivariate series of some Motzkin paths naturally appearing in \cref{sec:bijection}, so as to prove, in \cref{lem:criterion}, that the rationality stated in \cref{thm:bivExprRat} amounts to the symmetry of a certain generating function. In~\cref{sec:genSeries}, we focus on the generating series of unicellular maps refined by their scheme and some other statistics and state in Theorem~\ref{thm:MirrorR}, the refined rationality scheme that is key in our combinatorial proof of Theorem~\ref{thm:BC91}. We finally prove this theorem, first in \cref{sec:uniSym} for the univariate scheme, thus improving the proof \cite{Lep19}, and then in~\cref{sec:bivSym} for the bivariate case, concluding the bijective proof of the rationality expressed in \cref{thm:bivExprRat}.
\medskip

\textbf{Acknowledgment:}
Both authors warmly thank Éric Fusy for suggesting this problem and an anonymous referee for his/her thorough reading, which improves significantly the presentation of this work. 

Both authors were partially supported by the French ANR grant GATO~(16\,CE40\,0009), and MA by the ANR grant IsOMa (ANR-21-CE48-0007).

\section{Notations, definitions and first constructions}\label{sec:def}

\subsection{Notations}
In this article, combinatorial families are named with calligraphic letters (e.g. $\mathcal{M}$), their generating series in the corresponding capital letter (e.g. $M(z)$), and an object of the family is usually denoted by the corresponding lower case letter in gothic font (e.g. $\mathfrak{m}$). 

For $n\in \mathbb{Z}_{>0}$, we set $[n]=\{1,\ldots,n\}$, and for $i,j\in \mathbb{Z}_{\geq 0}$, with $i\leq j$, we set $[i,j]=\{i,\ldots,j\}$. 
\subsection{Maps}\label{sec:maps}

An \emph{embedded graph} is a proper embedding of a connected graph on a surface, considered up to orientation-preserving homeomorphisms. 
A \emph{face} of an embedded graph is a connected component of its complementary in the surface. 
An embedded graph is a \emph{map} if all its faces are topologically equivalent to disks, see Figure~\ref{fig:Tore}.
In this work we only consider maps on compact orientable surfaces without borders, which can be classified by their genus. The \emph{genus} of a map is hence defined as the genus of its underlying surface.

\begin{figure}[t]
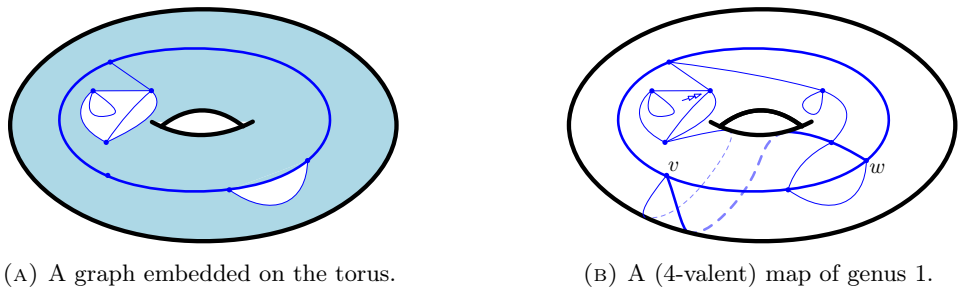

  \centering
  \subcaptionbox{A graph embedded on the torus.\label{subfig:notMaps}}{\;\includegraphics[page=2,width=0.35\linewidth]{images/Tore_HDR.pdf}}\qquad\qquad\qquad
  \subcaptionbox{A (4-valent) map of genus 1.\label{subfig:MapsTore}}{\;\includegraphics[page=1,width=0.35\linewidth]{images/Tore_HDR.pdf}}
 \caption{Two embeddings of a connected graph on the torus of genus 1. On~\subref{subfig:notMaps}, the embedding is not cellular: the shaded face is indeed not homeomorphic to a disk. On~\subref{subfig:MapsTore}, the embedding is cellular and defines a map of genus 1.}
  \label{fig:Tore}
\end{figure}

For a map $\m$, we denote respectively $V(\m)$, $E(\m)$ and $F(\m)$, its set of vertices, edges and faces. 
\medskip

An edge can be seen as the reunion of two \emph{half-edges}, such that the removal of the middle point of an edge separates the two half-edges.  The \emph{degree} of a vertex is the number of half-edges adjacent to it. Equivalently, it is the number of edges adjacent to this vertex, with multiplicity. Similarly, the \emph{degree} of a face is the number of edges adjacent to this face, with multiplicity.

The embedding fixes the cyclical order of edges around each vertex, which defines readily a \emph{corner} as a couple of consecutive edges around a vertex. Corners may also be viewed as incidences between vertices and faces.
More precisely, a \emph{corner} is an ordered pair $(h,h')$, where $h$ and $h'$ are two half-edges adjacent to the same vertex $v$ such that $h'$ directly follows $h$ in counterclockwise order around $v$\footnote{Note that, the embedding of the map (up to orientation-preserving homeomorphisms) is completely characterized by the cyclic ordering of the edges around each vertex. This fact will come in handy in the following as it allows us to represent maps without the underlying surface.}. The set of corners of a map $\m$ is denoted $C(\m)$
. For $\kappa \in C(\m)$, we denote by $\v(\kappa)$ the vertex incident to $\kappa$.

A map $\m$ is \emph{rooted} by marking one of its corners, which is called the \emph{root corner} and denoted $\rho_\m$
 or $\rho$ if there is no ambiguity. In figures, the root corner is indicated by a double arrow, see Figure~\ref{fig:Tore}\subref{subfig:MapsTore}. The vertex and the face incident to $\rho_\m$ are respectively called the root vertex and the root face and are respectively denoted $\v(\rho)$ and $f_{\rho}$.

\subsection{Bicolorable maps and the radial construction}\label{sub:radial}
A map is called \emph{bicolorable} if its faces can be properly colored with two colors, where a coloring is said to be proper if all the edges of the map separate faces of distinct colors\footnote{The dual of bicolorable maps are bipartite maps which are more often encountered in the literature}. It is easy to see that a bicolorable map admits only two proper colorings of its faces, and that one is obtained from the other by switching the color of every face. 

\begin{convention}
In the following, all the rooted bicolorable maps are assumed to be properly colored in black and white with their root face being black, see Figure~\ref{fig:radial}\subref{subfig:radial2}. With this convention, for $\m$ a rooted bicolorable map, we denote respectively $F_\bullet(\m)$ and $F_\circ(\m)$ the set of black and white faces of $\m$.
\end{convention}
\medskip

Fix $\m$ a bicolorable map. The existence of a proper bicoloring implies immediately that for any $v\in V(\m)$, the degree of $v$ is even. Therefore, a bicolorable map is \emph{Eulerian}. In genus 0, a map is Eulerian if and only if it is bicolorable (and Schaeffer's original paper deals indeed with \emph{Eulerian planar maps}). But this equivalence fails in positive genus, where some Eulerian maps may not be bicolorable, see for instance Figure~\ref{fig:schemes}\subref{subfig:schemeCarre}. It appears that the right family of maps to consider in order to generalize Schaeffer's construction is the family of bicolorable maps rather than Eulerian maps. 
\smallskip

We denote by $\bc_g$ the set of bicolorable maps of genus $g$, and introduce their generating function $BC_g$ when enumerated by both their number of black and white faces and by their degree distribution. More precisely, we set: 
\begin{equation}\label{eq:defBC}{}
BC_g(z_\bullet,z_\circ;d_1,d_2,\ldots;t)=\sum_{\m\in \bc_g}t^{|E(\m)|}z_\bullet^{|F_\bullet(\m)|}z_\circ^{|F_\circ(\m)|} \prod_{i\geq 1} d_i^{n_i(\m)}, 
\end{equation}
where, for each $i\geq 1$, $n_i(\m)$ denotes the number of vertices of degree $2i$ of $\m$. For $\m\in \bc_g$, we clearly have $\sum_{i}in_i(\m)=|E(\m)|$, so that by Euler's formula: 
\begin{equation}\label{eq:EulerBicol}
  |F_\bullet(\m)|+|F_\circ(\m)| = 2-2g+\sum_{i\geq 1}(i-1)n_i(\m).
\end{equation}
This implies that the generating series of these maps without vertices of degree 2, i.e. the evaluation $BC_g(z_\bullet,z_\circ;0,d_2,\ldots;t)$, is a formal power series in $z_\bullet$ and $z_\circ$ with polynomial coefficients in $t$ and in the $(d_i)_{i\geq 2}$. 
\medskip

A map is said to be $4$-valent if all its vertices have degree 4. The set of $4$-valent bicolorable maps of genus $g$ is denoted $\bc^\times_g$. Its bivariate generating series is denoted by $BC^\times(z_\bullet,z_\circ)$, and corresponds to the following specialization: 
\[
  BC_g^\times(z_\bullet,z_\circ) = BC_g(z_\bullet,z_\circ;0,1,0\ldots;1),
\]
which is indeed a formal power series in $z_\bullet$ and $z_\circ$ by the remark above.
\smallskip

\begin{figure}[t]
\centering
    \subcaptionbox{A map (plain edges) and its radial (dashed edges).\label{subfig:radial1}}
    {\includegraphics[page=1,scale=0.82]{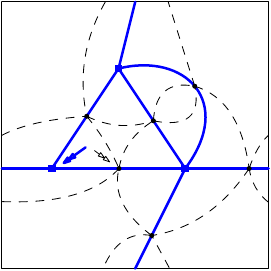}}
    \qquad\qquad
    \subcaptionbox{A map (dashed edges) and its radial (plain edges).\label{subfig:radial2}}
    {\includegraphics[page=2,scale=0.82]{images/Radial.pdf}}
    \qquad\qquad
    \subcaptionbox{The local rule and the rooting convention\label{subfig:LocalRadial}}
    {\includegraphics[page=3,scale=0.82]{images/Radial.pdf}}
\caption{\label{fig:radial}Illustration of the radial construction. The original map is represented with blue edges and square vertices and its radial by black edges and round vertices.\\
In~\subref{subfig:radial1} and~\subref{subfig:radial2}, we use the classical representation of the torus of genus 1 as a square where  both pairs of its opposite sides must be identified. In~\subref{subfig:radial2}, the faces of the radial map are colored in black (actually gray for sake of visibility) and white, following our coloring convention.}    
\end{figure}

This family of maps holds special interest thanks to the \emph{radial construction} introduced by Tutte in~\cite{Tut63}. This construction maps bijectively $\mathcal{M}_g$ to $\bc^\times_g$ in the following way (illustrated in Figure~\ref{fig:radial}). Fix $\m\in \mathcal{M}_g$. The radial $\r$ of $\m$ is then defined as follows. 
First, the vertices of $\r$ correspond to the edges of $\m$. Then, each corner of $\m$ gives rise to an edge in $\r$. More precisely, for $(h,h')\in C(\m)$, let $e,e'\in E(\m)$ be such that $h\subset e$ and $h'\subset e'$. Write $v_e$ and $v_{e'}$ for the corresponding vertices in $\r$ and add an edge in $\r$ between $v_e$ and $v_{e'}$. Finally, we root $\r$ at the corner adjacent to the face $f$ corresponding to the root vertex of $\m$, and just after the edge of $\r$ corresponding to $\rho_\m$, in clockwise order around $f$.

It can be proved that $\r$ is a map of genus $g$. Moreover, since every edge of $\m$ is incident to 4 corners, $\r$ is 4-valent. Faces of $\r$ correspond either to a vertex or a face of $\m$. For $c\in C(\m)$, the corresponding edge of $\r$ separates the faces of $\r$ corresponding to the vertex and to the face adjacent to $c$. By consequence, $\r$ is bicolorable and the rooting convention ensures that vertices and faces of $\m$ correspond respectively to black and white faces of $\r$. Hence, the radial construction has the following enumerative corollary, which is essential to our result:

\begin{proposition}\label{prop:radial}
The radial construction is a bijection between $\mathcal{M}_g$ and $\bc^\times_g$. Moreover, fix $\m\in \mathcal{M}_g$ and denote by $\r$ its image. Then, the radial construction induces a bijection between $V(\m)$ and $F_\bullet(\r)$ on one hand and between $F(\m)$ and $F_\circ(\r)$ on the other hand. 

Therefore, it yields the following equality of generating series: 
\begin{equation}
M_g(z_\bullet,z_\circ)=BC^\times_g(z_\bullet,z_\circ)
\end{equation}
\end{proposition}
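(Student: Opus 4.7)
The plan is to verify that the radial construction is well defined, 4-valent, bicolorable, properly rooted, and genus-preserving, and then to exhibit an explicit inverse that witnesses the bijection on the combinatorial statistics we care about.

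First, I would check directly from the definition that $\r$ is 4-valent and identify its faces. Each edge $e\in E(\m)$ is incident to exactly four corners (two on each side), and by construction each such corner contributes exactly one edge of $\r$ adjacent to the vertex $v_e$, so $\deg_\r(v_e)=4$. To identify faces of $\r$, I would trace the facial cycles: fixing a vertex $v\in V(\m)$ of degree $k$, the $k$ corners around $v$ produce $k$ edges of $\r$ whose successive positions (read off the cyclic order at $v$) bound a $k$-gonal face $f_v$ of $\r$; analogously, tracing the boundary of a face $f\in F(\m)$ of degree $\ell$ produces an $\ell$-gonal face $f^\star$ of $\r$. Since every corner of $\m$ is incident to exactly one vertex and exactly one face, every edge of $\r$ separates some $f_v$ from some $f^\star$, so the two-coloring declaring each $f_v$ black and each $f^\star$ white is proper. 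This also proves bicolorability and gives the bijections $V(\m)\to F_\bullet(\r)$ and $F(\m)\to F_\circ(\r)$.

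Next, I would verify that $\r$ is genuinely a cellular embedding of the correct genus on the same surface. By the construction $|V(\r)|=|E(\m)|$, $|E(\r)|=|C(\m)|=2|E(\m)|$, and the face count is $|V(\m)|+|F(\m)|$, so Euler's relation gives $|V(\r)|-|E(\r)|+|F(\r)|=|V(\m)|-|E(\m)|+|F(\m)|=2-2g$; combined with the fact that each $f_v$ and $f^\star$ is homeomorphic to a disk, $\r$ is a map of genus $g$. For the rooting, the convention that the root corner of $\r$ lies in the face corresponding to $\v(\rho_\m)$, immediately after the edge issued from $\rho_\m$ in clockwise order, identifies a unique corner of $\r$; by our coloring convention this root face is black, consistent with $\v(\rho_\m)\in V(\m)$.

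To show bijectivity I would construct the inverse $\Phi\colon\bc^\times_g\to\mathcal{M}_g$. Given $\r\in\bc^\times_g$, define a map $\m$ whose vertices are the black faces of $\r$ and whose edges are the vertices of $\r$: each $v\in V(\r)$, being 4-valent, is incident to exactly two black faces (alternating with two white faces around $v$), and is declared the midpoint of an edge of $\m$ joining these two black faces. The cyclic order of edges around a black face $f$ of $\r$ induces the cyclic order of edges around the corresponding vertex of $\m$, which fixes the embedding, and the root corner of $\r$ selects a distinguished corner of $\m$ via the same local rule read backwards. A direct check corner-by-corner shows that $\Phi$ and the radial construction are mutual inverses and that black (resp.\ white) faces of $\r$ correspond to vertices (resp.\ faces) of $\Phi(\r)$.

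The enumerative corollary is immediate from the statistic-preserving bijection, yielding $M_g(z_\bullet,z_\circ)=BC^\times_g(z_\bullet,z_\circ)$. The main subtlety I expect is not in the combinatorics of half-edges but in keeping the orientation and the rooting conventions coherent simultaneously: one must verify that the local rule of Figure~\ref{fig:radial}\subref{subfig:LocalRadial} is compatible with the clockwise-versus-counterclockwise conventions at the root, so that the inverse truly lands on the root corner one started from, and that the black-root convention for $\r$ corresponds exactly to marking a vertex (and not a face) of $\m$.
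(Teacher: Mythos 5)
Your proof is correct and follows the same route the paper sketches in the paragraph preceding the proposition (4-valence from corners, faces of $\r$ arising from vertices and faces of $\m$, the corner rule giving a proper two-coloring, and the rooting convention aligning $V(\m)$ with $F_\bullet(\r)$). In fact the paper states the proposition without a formal proof, deferring the topological claim to Tutte; your additional details (the Euler-characteristic computation verifying the genus, and the explicit inverse recovering $\m$ from black faces and $4$-valent vertices of $\r$) only make the argument more self-contained.
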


\subsection{Unicellular maps, cores, schemes}\label{sub:unicellular}
A \emph{unicellular map} is a map with only one face. In particular, a unicellular map of genus 0 is a plane tree. 

The \emph{clockwise contour} of a unicellular map is the sequence of corners encountered while following the boundary of its unique face starting from its root corner, while keeping the interior of the face on its right. It corresponds to following the boundary of the disk homeomorphic to the face in clockwise direction, and note that it means turning in \emph{counterclockwise} direction around the vertices.
\medskip

The \emph{core} of a unicellular map $\u$ -- denoted $\Psi(\u)$
 -- is the map obtained from $\u$ by removing iteratively its vertices of degree one together with their incident edges. It is easy to see that the collection of deleted edges forms a forest of plane trees. Hence, a unicellular map can be canonically decomposed into its core and this forest of plane trees, see Figure~\ref{fig:schemes}\subref{subfig:coreDec}. The operation of that associates its core to a unicellular map $\u$ is called the \emph{pruning} of $\u$.

Next, the \emph{scheme} of $\u$ -- denoted $\Theta(\u)$ -- is the map obtained from $\Psi(\u)$ by additionally removing its vertices of degree 2. More precisely, for any $v$ a vertex of degree $2$ in $\Psi(\u)$, denote by $x,y$ its two neighbours. Then, merge the two former edges $\{v,x\}$ and $\{v,y\}$ into an edge $\{x,y\}$ and delete $v$, see Figure~\ref{fig:schemes}\subref{subfig:schemeHexa}.  
. 
\smallskip

By extension, a \emph{core} is a unicellular map in which all the vertices have degree not smaller than 2 and a \emph{scheme} is a unicellular map in which all the vertices have degree not smaller than 3.

\begin{figure}[t]
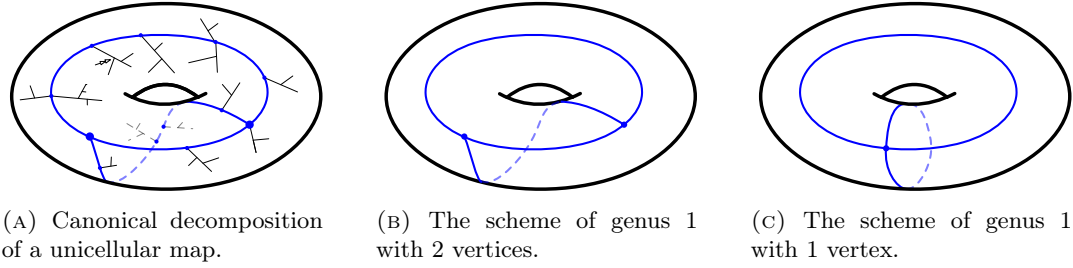

  \centering \subcaptionbox{Canonical decomposition of a unicellular map.\label{subfig:coreDec}}{\;\includegraphics[page=8,width=0.28\linewidth]{images/Tore_HDR.pdf}}\qquad
  \subcaptionbox{The scheme of genus 1 with 2 vertices.\label{subfig:schemeHexa}}{\;\includegraphics[page=3,width=0.28\linewidth]{images/Tore_HDR.pdf}}\qquad
  \subcaptionbox{The scheme of genus 1 with 1 vertex.\label{subfig:schemeCarre}}{\;\includegraphics[page=4,width=0.28\linewidth]{images/Tore_HDR.pdf}}
 \caption{In~\subref{subfig:coreDec}, we represent schematically the decomposition of a unicellular map into a forest of trees (represented in thin black edges) and a core (represented in fat blue edges). Its scheme is represented in~\subref{subfig:schemeHexa}. In~\subref{subfig:schemeCarre}, we represent the other possible scheme of genus 1.}
  \label{fig:schemes}
\end{figure}

\begin{remark} A simple computation shows that in genus 1, a scheme has either 1 or 2 vertices. Both possibilities are illustrated in Figure~\ref{fig:schemes}\subref{subfig:schemeHexa} and \ref{fig:schemes}\subref{subfig:schemeCarre}. When the scheme of a unicellular map $\u$ of genus 1 has 2 vertices, it is convenient to represent $\u$ inside an hexagon whose edges need then to be identified. In that case, the vertices of the hexagon correspond to the vertices of $\Theta(\u)$. By extension, maps of genus 1 are often represented inside an hexagon: for instance the map represented in Figure~\ref{fig:orientations} is the same as the one represented in Figure~\ref{fig:Tore}\subref{subfig:MapsTore}.
\end{remark}

\subsection{Orientations}\label{sec:orientations}
An \emph{orientation} of a map is an orientation of all its edges. An \emph{oriented map} is a map endowed with an orientation. 
For the remainder of the section, let $\m$ be a fixed oriented map.
A face of $\m$ is said to be \emph{clockwise} (resp. \emph{counterclockwise}) if its boundary forms a directed cycle and such that $\rho_\m$ lies on its left (resp. on its right). 

\begin{definition}
The orientation of $\m$ is \emph{bicolorable} if along any simple oriented loop $\ell$ drawn on the underlying surface such that $\ell \cap V(\m)=\emptyset$, there are as many edges (with multiplicity) crossing $\ell$ towards its right and towards its left. 
\end{definition}
 It is not difficult to see that if a map admits a bicolorable orientation, then it is bicolorable. Indeed, the existence of a bicolorable orientation implies directly that any cycle in the dual map has even length. Hence the dual map is bipartite, which implies that $\m$ is bicolorable. In fact, reciprocally, any bicolorable map does admit one bicolorable orientation. We now describe one such orientation which will be quintessential to our work.

First, define the \emph{height function} $h:F(\m)\rightarrow \mathbb{Z}_{\geq 0}$ of $\m$ as the unique function that satisfies the following conditions: 
\begin{compactitem}
\item The height of the root face is equal to $0$,
\item The height of a non-root face is equal to the minimum of the height of its adjacent faces plus $1$.
\end{compactitem}
Note that for $f\in F(\m)$, $h(f)$ can equivalently be defined as the dual distance between $f$ and the root face. Note also that in the canonical coloring of the faces of a bicolorable map, black and white faces have respectively even and odd heights, see Figure~\ref{fig:orientations}\subref{subfig:dualgeodheight}. 
Since $\m$ is bicolorable, the heights of two adjacent faces differ exactly by $1$.
Then, the \emph{dual-geodesic orientation} of $\m$ is defined as the orientation such that the face on the left of an edge is the face with higher height. It is clearly a bicolorable orientation.
\medskip

\begin{figure}[t]
  \centering
   \subcaptionbox{The dual-geodesic orientation. Labels correspond to the height of faces.\label{subfig:dualgeodheight}}{\;\includegraphics[scale=0.9,page=6]{images/Rooting}}\qquad\qquad\qquad
  \subcaptionbox{The orientation is not bicolorable as witnessed by the red dashed cycle.\label{subfig:noBicolor}}{\;\includegraphics[scale=0.9,page=1]{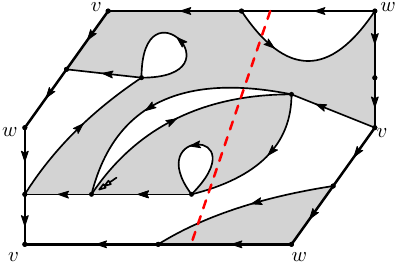}}
 \caption{The same 4-valent bicolorable map endowed with some Eulerian orientations with no clockwise face. The one in~\subref{subfig:dualgeodheight} is bicolorable and corresponds to the dual-geodesic orientation, the one in~\subref{subfig:noBicolor} is not bicolorable.}
  \label{fig:orientations}
\end{figure}

The rest of this section is not directly used in this article. However, since it is crucial to the proof of \cite[Theorem~3.14]{Lep19} on which relies heavily Theorem~\ref{thm:bijbivariate}, we include it here for sake of completeness. 

It follows from the construction that a bicolorable map endowed with its dual-geodesic orientation has no clockwise face. The following theorem stated in~\cite[Corollary 2.19]{Lep19}, and which is a consequence of the more general work of Propp \cite{Propp93}, gives another characterization of the dual-geodesic orientation, based on this observation:
\begin{theorem}\label{thm:propp}
For any bicolorable map, its dual-geodesic orientation is its unique bicolorable orientation with no clockwise face.
\end{theorem}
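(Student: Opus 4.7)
The plan is to invoke the distributive lattice structure on $\alpha$-orientations of an oriented surface, due to Propp \cite{Propp93}, and to identify $\mathcal{O}^*$ as the unique extremal element having no clockwise face. First I would note that an orientation of $\m$ is bicolorable if and only if (i) at every vertex $v$ the out-degree equals $\deg(v)/2$ (obtained by specializing the defining condition to small loops around $v$), and (ii) the $1$-cycle carried by the orientation is null-homologous on the underlying surface (obtained by specializing to non-contractible loops). Consequently, the set of bicolorable orientations of $\m$ coincides with the set of $\alpha$-orientations for $\alpha(v)=\deg(v)/2$ that belong to the trivial homology class.

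By Propp's theorem applied to this homology class, this set carries a distributive lattice structure whose covers correspond to reversing counterclockwise contractible face cycles, and whose extremal element is characterized uniquely by the absence of any clockwise face. Since $\mathcal{O}^*$ is bicolorable and has no clockwise face (both facts already asserted in the excerpt, and following directly from the monotonicity of the height function along edges), it must be this extremal element, giving the uniqueness.

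For a more self-contained argument avoiding the black-box invocation of Propp, one can proceed directly. Suppose $\mathcal{O}$ is another bicolorable orientation of $\m$ with no clockwise face, and consider the set $D \subseteq E(\m)$ of edges on which $\mathcal{O}$ and $\mathcal{O}^*$ disagree. Since both orientations satisfy (i) and (ii), $D$ is Eulerian and null-homologous as a $1$-chain. If $D \neq \emptyset$, one extracts a contractible directed sub-cycle $C$ in $\mathcal{O}$, bounding an embedded disk $D_0$ on one side; up to swapping the roles of $\mathcal{O}$ and $\mathcal{O}^*$ (the reversed cycle $-C$ is directed in $\mathcal{O}^*$ precisely because $C \subseteq D$, and has $D_0$ on the opposite side), one may assume $D_0$ lies on the right of $C$ in $\mathcal{O}$. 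A planar Jordan-face induction inside $D_0$ — base case a single face, inductive step using an interior edge and a sub-arc of $C$ to shortcut $C$ into a strictly shorter directed cycle bounding a strictly smaller sub-disk — then produces a face of $D_0$ whose boundary is a directed cycle in $\mathcal{O}$ circling clockwise, contradicting the hypothesis on $\mathcal{O}$.

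The main obstacle I anticipate is the contractibility step in the self-contained version: in positive genus, the directed-cycle decomposition of $D$ might initially consist only of non-contractible cycles whose homology classes cancel out, and producing a contractible cycle from them requires surgical rerouting at shared vertices — which is exactly the combinatorial content underlying Propp's theorem, and is substantially more delicate than the planar case. By contrast, the Jordan-face induction itself, being a standard planar disk argument, is routine once one is reduced to a disk.
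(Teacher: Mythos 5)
Your first paragraph is essentially the paper's own argument: the paper gives no proof of \cref{thm:propp}, citing only \cite[Corollary~2.19]{Lep19} and Propp \cite{Propp93}, and your identification of bicolorable orientations with the $\alpha$-orientations ($\alpha(v)=\deg(v)/2$) whose associated $1$-cycle is null-homologous is exactly the reduction this citation presupposes. Small contractible loops around a vertex give the balanced-degree condition, and the full loop condition says the orientation's $1$-cycle has vanishing intersection number with every class in $H_1$, hence is null-homologous by Poincar\'e duality together with the torsion-freeness of $H_1$ of a closed orientable surface. That is the fixed-$\alpha$, fixed-homology-class setting of Propp's distributive lattice, with the root face playing the role of the face one must pin for the lattice structure (and the covering relation) to be well-founded. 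So your primary route matches the paper.

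The obstacle you flag in the ``self-contained'' version is genuine and you are right not to claim to have closed it: a directed-cycle decomposition of the disagreement set $D$ may consist only of non-contractible cycles whose homology classes cancel, and recombining them into a single contractible directed cycle is precisely the non-planar content of Propp's argument. You could, however, bypass both Propp and the Jordan-face induction. Since $\mathcal{O}$ is bicolorable, it carries a well-defined height function $h_{\mathcal{O}}:F(\m)\to\mathbb{Z}$ with $h_{\mathcal{O}}(f_\rho)=0$ and $|h_{\mathcal{O}}(f)-h_{\mathcal{O}}(f')|=1$ for adjacent faces $f,f'$; well-definedness around arbitrary closed dual walks is exactly the bicolorability of $\mathcal{O}$, and the two sides of an edge are distinct faces because $\m$ itself is bicolorable. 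The no-clockwise-face hypothesis translates into the statement that no non-root face is a strict local extremum of $h_{\mathcal{O}}$ of the relevant sign; examining a global extremizer yields $h_{\mathcal{O}}\ge 0$, whence $0\le h_{\mathcal{O}}\le h$ by the triangle inequality against the dual distance $h$. Taking a face $f$ minimizing $h_{\mathcal{O}}(f)$ among those with $h_{\mathcal{O}}(f)<h(f)$, the hypothesis produces a neighbour $f'$ with $h_{\mathcal{O}}(f')=h_{\mathcal{O}}(f)-1<h(f)-1\le h(f')$, contradicting minimality; hence $h_{\mathcal{O}}=h$ and $\mathcal{O}$ is the dual-geodesic orientation. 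No contractible cycle is ever extracted, so the difficulty you anticipated simply does not arise.
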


\begin{remark}
Consider a map $\m$, endowed with a bicolorable orientation. Fix $v\in V(\m)$ and consider a loop $\ell$ drawn on the underlying surface around $v$ small enough that no other vertex lies inside $\ell$. The definition of a bicolorable orientation implies that half the edges incident to $v$ are oriented towards it and half of them are oriented outwards it. This is precisely the definition on an Eulerian orientation. Therefore, any bicolorable orientation is in particular Eulerian. 

The dichotomy between Eulerian and bicolorable orientations is reminiscent of the dichotomy between Eulerian and bicolorable maps.
In genus 0, any Eulerian orientation is bicolorable. However, this is not true anymore in higher genus, see for instance Figure~\ref{fig:orientations}\subref{subfig:noBicolor}. The existence of the nice characterization of bicolorable orientations stated in Theorem~\ref{thm:propp} justifies the fact that bicolorable orientations are nicer to work with in higher genus.
\end{remark}

\section{Bijection for higher-genus maps, counting vertices and faces}\label{sec:bijection}

\subsection{Definition of blossoming maps}\label{sec:blossoming}
A \emph{blossoming map} is a map $\m$, with some additional unmatched half-edges, called \emph{stems}. The set of stems of $\m$ is denoted $S(\m)$
. For $s\in S(\m)$, we denote $\v(s)$ its incident vertex. Stems are oriented, and 
a stem $s$ is called a \emph{leaf} if it is oriented towards $\v(s)$ and a \emph{bud} otherwise. 
The blossoming maps we consider have almost always as many buds as leaves.

The interior map $\mathring{\m}$ of $\m$ is the map induced by $E(\m)$. For $v$ a vertex of $\m$, the \emph{interior degree} of $v$ -- denoted $\mathring{\deg}(v)$ -- is the degree of $v$ in $\mathring{\m}$. 

\begin{convention}
A rooted blossoming map is always rooted in a corner that precedes (in counterclockwise order) a bud. Equivalently, a rooted blossoming map is a blossoming map with a marked bud, called the \emph{root bud}.

In the figures, the root bud is represented by a double red arrow, see for instance Figure~\ref{fig:closurefirst}\subref{subfig:good}.
\end{convention}
In genus 0, this convention implies that rooted unicellular blossoming maps are blossoming plane trees with a marked bud. It corresponds to \emph{planted plane trees} as considered originally in \cite{Scha97}.

\subsection{Closure of blossoming maps}\label{sec:closure}
The closure of a blossoming map consists in matching its buds and leaves in a canonical way. Informally, buds and leaves are respectively seen as opening and closing parentheses and are matched as in a parenthesis word. We give in this section two (more formal) equivalent definitions of the closure operation. 

\subsubsection{First definition of closure}
Let $\u$ be a rooted blossoming unicellular map, with the same number of leaves and buds. The \emph{closure} of $\u$ is defined as follows (see Figure~\ref{fig:closurefirst}). The clockwise contour of $\u$ induces a cyclic ordering, denoted $\pct$, on the stems. Let $\beta$ be a bud (incident to $v\in V(\u)$) such that the next stem for $\pct$ after $\beta$ is a leaf $\ell$ (incident to $w\in V(\u)$). The \emph{local closure} of $\beta$ consists in merging $\beta$ and $\ell$ into an edge $\{v,w\}$ oriented from $v$ to $w$. The local closure produces a new face which lies on the left of $\{v,w\}$ and which is denoted $f(\ell)$.

\begin{figure}[t]
\centering
    \subcaptionbox{A well-rooted blossoming map,\label{subfig:good}}
    {\includegraphics[page=2,scale=0.6]{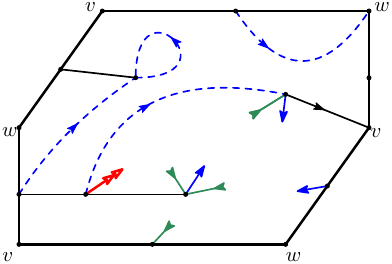}}
    \qquad
    \subcaptionbox{after a few local closures,\label{subfig:partial}}
    {\includegraphics[page=1,scale=0.6]{images/ClosureFirst.pdf}}
    \qquad
    \subcaptionbox{the full closure\label{subfig:fullclosure}}
    {\includegraphics[page=3,scale=0.6]{images/ClosureFirst.pdf}}
\caption{\label{fig:closurefirst}The closure of a well-rooted blossoming map.}    
\end{figure}

Next the \emph{closure} of $\u$ is the map obtained by performing iteratively all possible local closures (after each local closure, the ordering on the stems is obtained by removing the two stems involved in the local closure from the previous ordering). The resulting map is rooted on the corner preceding the root bud before its closure.

We can prove that the closure is well-defined in the sense that it does not depend on the order in which the local closures are performed, and that there are no remaining unmatched stems in the closure. 

\begin{definition}
A rooted blossoming unicellular map $\u$ with the same number of leaves and buds is called \emph{well-rooted} if the local closure involving its root bud can be the last local closure performed\footnote{In Schaeffer's original paper~\cite{Scha97}, well-rooted unicellular maps of genus 0 are called \emph{balanced trees}. But, in some recent works, see for instance~\cite{HDRLeveque}, the authors called \emph{balanced orientations}, some orientations which satisfy a property analogue to the one defining bicolorable orientations. To avoid confusion, we hence stick to the terminology ``well-rooted'' as introduced in~\cite{Lep19}} 

Equivalently, a rooted blossoming map is well-rooted, if there is no local closure involving a bud $\beta$ and a leaf $\ell$ such that $\beta\pct\rho_{\u}\pct\ell$. 

Equivalently, flip the orientation of the root bud of $\u$ to transform it into a ``root leaf'' and perform all the possible local closures. The resulting map will have two unmatched leaves and if one of those is the root leaf, then $\u$ is \emph{well-rooted}.
\end{definition}

Let $\u$ be a blossoming map and denote by $\chi(\u)$ its closure. There is a clear bijection between edges of $\chi(\u)$ and the reunion of $E(\u)$ and of the set of leaves of $\u$. The edges in the image of $E(\u)$ in $\chi(\u)$ are called \emph{proper edges} and the other edges of $\chi(\u)$ are called \emph{closure edges}.
In the following, we identify $E(\u)$ and the set of \emph{proper-edges} of $\chi(\u)$ and view $\mathring \u$ as a submap of~$\chi(\u)$.

\subsubsection{Closure with labels}\label{sub:closureLabels}
We now present an alternative description of the closure, slightly more formal, which is built upon a labeling of the corners. Let $\u$ be a rooted blossoming unicellular map. Recall that $C(\u)$ denotes the set of its corners.
We define the \emph{canonical labeling} $\lambda: C(\u)\rightarrow \mathbb{Z}$ of $\u$ as follows. First, $\lambda(\rho_\u)=0$. Then, we perform a clockwise contour of $\u$ and apply the following rules. Let $\kappa_1$ and $\kappa_2$ be two successive corners, then the label $\lambda(\kappa_2)$ of $\kappa_2$ is equal to: 
\begin{itemize}
	\item $\lambda(\kappa_1)$ if $\kappa_1$ and $\kappa_2$ are incident along a common edge, 
	\item $\lambda(\kappa_1)+1$ if $\kappa_1$ and $\kappa_2$ are separated by a bud,
	\item $\lambda(\kappa_1)-1$ if $\kappa_1$ and $\kappa_2$ are separated by a leaf.
\end{itemize}

The canonical labeling can be extended from corners to stems: 
for a stem $\sigma$ adjacent to $2$ corners with labels $i-1$ and $i$, we set $\lambda(\sigma)=i$, see Figure~\ref{fig:closure}\subref{subfig:goodlab}.
\medskip

We can now give an equivalent definition of the closure procedure. Consider $\u$ endowed with its canonical labeling. 
For any bud $\beta$, denote $\ell(\beta)$ the first leaf after $\beta$ (for $\preceq$) such that $\lambda(\beta)=\lambda(\ell(\beta))$. Then the \emph{closure based on labels} of $\u$ is the map obtained by merging $\beta$ and $\ell(\beta)$ into an oriented edge. 
The resulting map is rooted on the corner preceding the root bud before the closure.
\begin{figure}[t]
\centering
    \subcaptionbox{A blossoming map endowed with its canonical labeling,\label{subfig:goodlab}}    {\includegraphics[page=9,scale=.9]{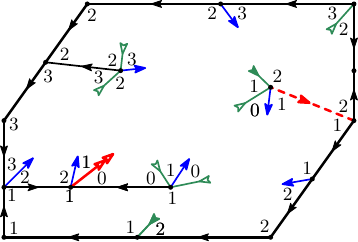}}
    \qquad\qquad
    \subcaptionbox{and after a few partial closures.\label{subfig:fullclosurelab}}
    {\includegraphics[page=17,scale=.9]{images/Rooting.pdf}}
\caption{\label{fig:closure}Illustration of local closures based on labels. The 4-valent unicellular blossoming map $\u$ represented in \subref{subfig:goodlab} is well-rooted, well-oriented and well-labeled: it is a good map.}    
\end{figure}

The following result follows straightforwardly from the definitions:
\begin{proposition}
Let $\u$ be a rooted unicellular blossoming map, with the same number of leaves and buds. Then the closure of $\u$ and the closure based on labels of $\u$ coincide. 

Moreover, $\u$ is well-rooted if and only if all its labels are non-negative. 
\end{proposition}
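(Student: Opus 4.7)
The plan is to observe that the canonical labeling is nothing but the running counter of a parenthesis word, and then to interpret both statements in that language.

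More precisely, if we unroll the clockwise contour of $\u$ starting at $\rho_\u$ into a linear sequence, the rules defining $\lambda$ say exactly that the label of a corner equals the number of buds minus the number of leaves encountered strictly before it (starting from $0$ at $\rho_\u$). So viewing buds as ``opening parentheses'' and leaves as ``closing parentheses,'' the labels along the contour are the prefix partial sums of the associated $\pm 1$ word. The label $\lambda(\beta)$ of a bud $\beta$ (resp.\ $\lambda(\ell)$ of a leaf $\ell$) is the value of this counter \emph{after} the increment (resp.\ \emph{before} the decrement), which matches the convention ``$\lambda(\sigma)$ equals the larger of the two adjacent corner labels.''

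For the first claim, the key point is that iterated local closure on the cyclic sequence is exactly the standard nested matching of a parenthesis word on a cycle: each local closure pairs an ``opening'' with the immediately following ``closing,'' then removes them. I would show by induction on the distance between $\beta$ and $\ell(\beta)$ that this nested matching coincides with the label-based matching. The induction rests on the following observation: if $\lambda(\beta)=i+1$ and $\ell(\beta)$ is the first leaf past $\beta$ with the same label $i+1$, then minimality of $\ell(\beta)$ forces every corner strictly between $\beta$ and $\ell(\beta)$ to have label $\geq i+1$. Hence, restricted to $(\beta, \ell(\beta))$, the sequence of stems is a balanced sub-parenthesis-word, so by induction its stems match among themselves; what remains is exactly the pair $(\beta, \ell(\beta))$, as produced by local closure.

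For the second claim, the definition rephrases ``well-rooted'' as: no matched pair $(\beta,\ell)$ straddles $\rho_\u$. Since the matching is nested on the cyclic contour, no pair straddling $\rho_\u$ is equivalent to saying that, when we cut the cycle at $\rho_\u$ and view the stems as a linear word, this word is itself a balanced parenthesis word (all matchings fit inside the linear interval). By the standard characterization of balanced parenthesis words, this holds if and only if every prefix has at least as many buds as leaves, i.e., every partial sum is nonnegative --- which is exactly the statement that all corner labels $\lambda(\kappa)\geq 0$. Since stem labels are sandwiched between the labels of their two adjacent corners, nonnegativity on corners is equivalent to nonnegativity on the whole labeling, giving the equivalence.

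The only real subtlety I foresee is bookkeeping in the induction of the first claim: one must handle the base case (where no stem lies strictly between $\beta$ and $\ell(\beta)$) and check that after each local closure the remaining sequence still satisfies the same inductive hypothesis with the labels inherited from $\u$. Everything else is a clean translation of ``nested matching of a parenthesis word'' into the language of the closure.
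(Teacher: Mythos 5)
Your proof is correct, and it is the natural unpacking of what the paper simply calls ``follows straightforwardly from the definitions.'' The translation to parenthesis words is exactly the intended mechanism: the canonical labeling is the prefix-sum counter, the iterated local closure is the innermost-out nested matching, and non-negativity of all corner labels is the Dyck condition on the linear word cut at the root corner, which is precisely the ``no pair straddles $\rho_\u$'' reformulation of well-rootedness. The induction you sketch for the first claim (that the label-based rule $\beta\mapsto\ell(\beta)$ reproduces the nested matching) is the right argument, and the key inequality --- that minimality of $\ell(\beta)$ forces all corners strictly between $\beta$ and $\ell(\beta)$ to carry label $\geq\lambda(\beta)$ --- is correct, since the counter must pass through $\lambda(\beta)-1$ via a leaf labeled $\lambda(\beta)$ before it can drop lower. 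One tiny slip in phrasing: the stem label $\lambda(\sigma)$ is not ``sandwiched between'' its two adjacent corner labels but is equal to the larger of the two; this only strengthens your point that non-negativity of corner labels yields non-negativity of stem labels, so the conclusion is unaffected.
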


\subsubsection{Closure and orientations}
For $\u$ a unicellular blossoming map with the same number of leaves and buds, observe that the closure edges of $\chi(\u)$ are naturally oriented. To endow $\chi(\u)$ with an orientation, it is therefore only necessary to prescribe the orientation of its proper edges. In other words, it is enough to orient the edges of $\mathring{\u}$.

To do so, we endow rooted unicellular maps with an orientation in the following canonical way. We perform a clockwise contour of their unique face, and orient each proper edge backwards the first time it is followed. Since the surfaces we consider are orientable, the orientation will be forward the second time the edge is followed. This yields the following definition.

\begin{definition}[well-oriented map]\label{def:wellOriented}
A blossoming unicellular map is called \emph{well-oriented} if it is endowed with the latter orientation.
\end{definition}

\subsection{Good blossoming maps and closures}
The closure operation defined in the previous section can be applied \emph{a priori} to any family of blossoming maps. Nevertheless, to obtain some interesting results, we now apply the closure to a specific family of maps. We follow the presentation of~\cite[Section~3]{Lep19}. In this section, we always assume that unicellular maps are canonically well-oriented. 

We first extend the labeling of corners to non-necessarily unicellular maps, in the following way:
\begin{definition}[well-labeled map]\label{def:wellLabeled}
A blossoming oriented map is said to be \emph{well-labeled} if its corners are labeled in such a way that: 
\begin{itemize}
\item the root corner is labeled 0, 
\item the labels of two corners lying on the same side of an edge coincide, and
\item the labels of two corners adjacent around a vertex differ by 1, in which case the higher label sits on the left of the separating edge (or stem). 
\end{itemize}
If such a labeling exists, it is called a \emph{good labeling}.
\end{definition}
Note first that an oriented map admits at most one good labeling. If $\u$ is a rooted unicellular well-oriented blossoming map, $\u$ is \emph{well-labeled} if and only if the labeling $\lambda$ of its corners defined in Section~\ref{sub:closureLabels} is a good labeling. 

\begin{claim}
Let $\m$ be an oriented map, then $\m$ admits a good labeling if and only if its underlying orientation is bicolorable.

Consequently, the closure of a rooted well-oriented and well-labeled unicellular map is a bicolorable map endowed with its dual-geodesic orientation.
\end{claim}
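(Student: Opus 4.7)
My plan is to split the claim into (i) the equivalence between the existence of a good labeling and bicolorability of the orientation, and (ii) the consequence about the closure. For (i), I first observe that a good labeling, if it exists, must be constant on each face: two corners of the same face $f$ can be connected by a walk along the boundary of $f$, and successive corners encountered this way lie on the same side of the edges they share, so they have equal labels by condition~1. Hence a good labeling descends to a function on $F(\m)$ with root-face label $0$ and with adjacent face labels differing by exactly $1$ (the higher label on the left of the separating edge). Such a face function exists if and only if its $\pm 1$ integration around any dual cycle (i.e., any loop on the underlying surface avoiding $V(\m)$) vanishes, and this is exactly the bicolorable condition: an edge crossed by the loop contributes $+1$ or $-1$ depending on whether the crossing is from the edge's right to its left or conversely, so vanishing means equal numbers of right-to-left and left-to-right crossings. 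Conversely, given a bicolorable orientation, I would construct the face labeling by integrating $\pm 1$ along any dual path from the root face; bicolorability guarantees path-independence.

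For (ii), observe that the closure $\chi(\u)$ has the same corners as $\u$, so the canonical labeling of $\u$ gives an assignment of labels on the corners of $\chi(\u)$. I would then check that this is a good labeling of $\chi(\u)$. For proper edges there is nothing new to check, since $\u$ is assumed well-labeled. On a closure edge $e$ formed by merging a bud $\beta$ of label $k$ at $v$ with a leaf $\ell$ of label $k$ at $w$, the four surrounding corners inherit the labels $k-1, k$ at $v$ and $k, k-1$ at $w$ listed in counterclockwise order around each vertex. Since $e$ is oriented $v\to w$ (hence outgoing at $v$ and incoming at $w$), the counterclockwise successor of $e$ at $v$ and the counterclockwise predecessor of $e$ at $w$ are exactly the two corners sitting on the left of $e$; these are also precisely the two corners of label $k$, which simultaneously verifies both conditions in the definition of a good labeling. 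By (i), $\chi(\u)$ is then bicolorable.

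To deduce that this orientation is the dual-geodesic one, I would apply Theorem~\ref{thm:propp}, which reduces the problem to showing that $\chi(\u)$ has no clockwise face. Euler's formula yields $|F(\chi(\u))| = 1 + n_B$, where $n_B$ is the number of buds of $\u$, so besides the root face there are exactly $n_B$ non-root faces, each obtained as the face $f(\ell)$ created by a single local closure. By the very definition of a local closure, $f(\ell)$ lies on the left of its defining edge; hence its boundary contains at least one edge with $f(\ell)$ on the left, which prevents $f(\ell)$ from being clockwise (that would require $f(\ell)$ on the right of every boundary edge). The root face itself cannot be clockwise since $\rho$ is a corner of it and therefore cannot lie on the left of any directed cycle bounding it. The main subtlety I anticipate is the bookkeeping in the second paragraph: carefully identifying the left side of each closure edge at both endpoints, and aligning the closure orientation $v \to w$ with the counterclockwise-successor/predecessor convention implicit in the canonical labeling.
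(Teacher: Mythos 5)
Your proof follows the same route as the paper's: a good labeling is constant on each face and hence descends to a face-labeling whose $\pm1$ integral around dual loops vanishes iff the orientation is bicolorable; the canonical labeling of $\u$ transfers to a good labeling of $\chi(\u)$; and the absence of clockwise faces plus Theorem~\ref{thm:propp} gives the dual-geodesic conclusion. You are in fact slightly more thorough than the paper on two points: you spell out the converse direction (bicolorable $\Rightarrow$ good labeling, by integrating along dual paths), which the paper's proof omits, and you verify the good-labeling conditions across a closure edge explicitly — and that verification (ccw-successor at $v$ and ccw-predecessor at $w$, both labeled $k$, both on the left of the outgoing-at-$v$/incoming-at-$w$ edge) is correct. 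Your argument that each $f(\ell)$ cannot be clockwise because its defining closure edge has $f(\ell)$ on its left is also sound.

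The genuine gap is the root face. The assertion that $\rho$, ``being a corner of'' the root face, ``cannot lie on the left of any directed cycle bounding it'' is not a valid inference: if the root face sat on the left of its boundary cycle then $\rho$, being inside that face, would sit on the left too, so nothing is ruled out by the mere fact that $\rho$ is a corner of the face. What actually excludes this is the rooting convention: $\rho$ is the corner that \emph{precedes} the root bud in counterclockwise order around $\v(\rho)$, so after closure it lies on the \emph{right} of the (outgoing) closure edge $e_\rho$ created from the root bud; if the boundary of the root face were a directed cycle, the root face, and hence $\rho$, would then lie on the right of it, making that face counterclockwise. You should supply this observation instead of the unsupported claim. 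Finally, note that identifying every non-root face with some $f(\ell)$ presupposes $\u$ is well-rooted (so that the single leftover face after all local closures is the one containing $\rho$); this is what holds in all of the paper's applications of the claim, so it is a weakness of the statement as much as of your proof, but it is an assumption worth flagging explicitly.
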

\begin{proof}
Let $\m$ be a (possibly blossoming) map endowed with a good labeling of its corners. Then, from the sequence of labels of corners around any fixed vertex, it is clear that the orientation is Eulerian. 

Assume now that $\m$ has no stem. It follows from the definition of a good labeling that all the corners incident to the same face has the same label, and define the label of the face to be this common value. Consider an oriented loop $\ell$ drawn on the underlying surface. Then, in the sequence of labels of faces visited by $\ell$, the label increases by one if and only if the edge crosses $\ell$ from left to right and decreases by one otherwise. It implies that the orientation of $\m$ is bicolorable. 

The closure of a well-labeled unicellular map is naturally endowed with a (good) labeling of its corners, hence the closure is bicolorable. Moreover if the unicellular map is well-oriented, then no clockwise cycle can be created during the closure so that it concludes the proof by Theorem~\ref{thm:propp}.
\end{proof}

In the rest of this section, we will state that the closure operation can be reversed and that, reciprocally, to a bicolorable map corresponds a unique well-labeled well-oriented unicellular blossoming map.
\begin{definition}\label{def:goodMaps}The set of well-rooted, well-labeled, well-oriented unicellular blossoming maps of genus $g$ is denoted by $\cO_g$. 

Moreover, the subset of $\cO_g$ restricted to 4-valent maps is denoted $\cO^\times_{g}$ and the elements of $\cO^\times_{g}$ are called \emph{good maps}.
\end{definition}
Let $\o\in \cO_g$. A leaf $\ell$ of $\o$ is said to be \emph{black} (respectively \emph{white}) if $\lambda(\ell)$ is \emph{even} (respectively \emph{odd}). The set of black and white leaves of $\o$ are respectively denoted $\ell_\bullet(\o)$ and $\ell_\circ(\o)$.
We then define the corresponding generating series as:
\begin{equation}\label{eq:defOg}{}
O_g(z_\bullet,z_\circ;d_1,d_2,\ldots;t)=\sum_{\o\in \mathcal{O}_g}t^{|E(\mathring{\o})|}z_\bullet^{|\ell_\bullet(\o)|+1}z_\circ^{|\ell_\circ(\o)|} \prod_{i\geq 1} d_i^{n_i(\o)}, 
\end{equation}
where we recall that, for each $i\geq 1$, $n_i(\m)$ denotes the number of vertices of degree $2i$ of $\o$. And similarly, we define:
\begin{equation}\label{eq:defGfGood}{}
O_g^\times(z_\bullet,z_\circ)=\sum_{\o\in \mathcal{O}^\times_g}z_\bullet^{|\ell_\bullet(\o)|+1}z_\circ^{|\ell_\circ(\o)|}.
\end{equation}

\begin{remark}\label{rem:asymetry}
The asymmetry in $z_\bullet$ and $z_\circ$ in the definition of $O_g$ comes from the asymmetry in the rooting convention of blossoming maps (and more precisely from the fact that the root bud is always matched with a white leaf). A simple way to remember this convention -- which will be useful in the sequel -- is to think of the root bud as a black leaf. So that an element of $\cO_{g}$ which contributes with a weight $z_\bullet^{n_\bullet}z_\circ^{n_\circ}$ to $O_g$ has $n_\bullet$ black leaves (including the root bud).
\end{remark}
We can now state the following theorem, which refines \cite[Theorem~3.14]{Lep19}: 
\begin{theorem}\label{thm:bijbivariate}
The closure operation is a bijection between $\cO_{g}$ and $\mathcal{BC}_{g}$, preserving the degree distribution of the vertices. Therefore, its restriction to 4-valent maps yields a bijection between $\cO^\times_{g}$ and $\mathcal{BC}^\times_{g}$. 

Moreover for $\o\in\cO_{g}$, it induces a bijection between $\ell_\circ(\o)$ and $F_\circ(\chi(\o))$ on one hand and between $\ell_\bullet(\o)$ and $F_\bullet(\chi(\o))\backslash\{f_\rho\}$ on the other hand. Hence we have the following equality of generating series:
\[
   O_g(z_\bullet,z_\circ;d_1,d_2,\ldots;t)=BC_g(z_\bullet,z_\circ;d_1,d_2,\ldots;t) \quad\text{ and }\quad O_g^\times(z_\bullet,z_\circ)=BC_g^\times(z_\bullet,z_\circ).
\]
\end{theorem}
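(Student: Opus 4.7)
The bijective aspect between $\cO_g$ and $\bc_g$ via the closure $\chi$ has already been established in \cite[Theorem~3.14]{Lep19}, with the forward direction (that $\chi(\o)$ is bicolorable and endowed with its dual-geodesic orientation) recorded in the Claim just above. What remains here are two refinements: the preservation of the degree distribution of vertices, and the correspondence between colored leaves of $\o$ and colored faces of $\chi(\o)$. Degree preservation is immediate from the definition of local closure: each such closure merges a bud and a leaf incident to two (not necessarily distinct) vertices into a single proper edge, without creating or deleting vertices and without modifying the total number of half-edges incident to any vertex. Hence, for every $v \in V(\o)$, its total degree in $\o$ (counting stems) equals its degree in $\chi(\o)$, yielding $n_i(\o) = n_i(\chi(\o))$ for every $i\geq 1$. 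The restriction to $4$-valent maps follows by specialising $d_2=1$ and $d_i=0$ for $i\neq 2$.

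For the face-color refinement, the plan is to track the canonical labeling $\lambda$ of $\o$ (from \cref{sub:closureLabels}) through the closure. Since $\o$ is well-labeled and well-oriented, $\lambda$ is a good labeling of $\o$ in the sense of \cref{def:wellLabeled}. I claim $\lambda$ extends naturally to a good labeling of $\chi(\o)$: at each local closure of a bud $\beta$ with its matched leaf $\ell$, one has $\lambda(\beta)=\lambda(\ell)=:k$, and a direct check shows that the two corners on the left of the newly-glued edge both carry label $k$, while the two corners on its right both carry label $k-1$, which is exactly what the good-labeling axioms require around a new edge. Iterating over all local closures, $\chi(\o)$ inherits a labeling of its corners satisfying those axioms. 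By uniqueness of the good labeling for a bicolorable orientation, the label of a corner equals the height of its incident face — and this height is even for black faces and odd for white ones by the coloring convention.

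It then follows that each local closure of a leaf $\ell$ produces a face of $\chi(\o)$ whose color is governed by the parity of $\lambda(\ell)$: black leaves produce black faces, white leaves produce white faces. The root face $f_\rho$ has height $0$ and is not created by any local closure; it corresponds to the root corner of $\o$ (with label $0$). This yields the bijections $\ell_\bullet(\o) \leftrightarrow F_\bullet(\chi(\o)) \setminus \{f_\rho\}$ and $\ell_\circ(\o) \leftrightarrow F_\circ(\chi(\o))$, hence $|F_\bullet(\chi(\o))|=|\ell_\bullet(\o)|+1$ and $|F_\circ(\chi(\o))|=|\ell_\circ(\o)|$, and the generating function equalities $O_g=BC_g$ and $O_g^\times = BC_g^\times$ follow by a direct comparison of the defining sums. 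The main obstacle in this plan is the inductive verification that $\lambda$ extends to a good labeling of $\chi(\o)$: between a bud $\beta$ and its matched leaf $\ell$ one must account for the corners enclosed by prior local closures, but the parenthesis structure of the matching guarantees that those corners all carry labels $\geq k$ on the boundary of the new face $f(\ell)$, so that the final gluing contributes exactly the pairs $(k,k)$ on the left and $(k-1,k-1)$ on the right of the new edge, and the induction goes through.
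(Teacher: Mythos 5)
Your proof is correct and follows essentially the same route as the paper: both cite \cite[Theorem~3.14]{Lep19} for the underlying bijection, both observe that the canonical labeling of $\o$ extends to the good labeling of $\chi(\o)$ (so that the label of a corner equals the height of its incident face), and both then match black/white leaves to black/white non-root faces via $\ell\mapsto f(\ell)$ together with the parity convention. The only small additions on your side are the explicit degree-preservation argument and the inductive verification that the labeling extends through local closures, which the paper instead derives directly from the label-based definition of closure and the preceding Claim.
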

\begin{proof}
Theorem~3.14 of \cite{Lep19} states that the closure operation is a bijection between $\cO_{g}$  and  $\mathcal{BC}_{g}$, such that the number of leaves of a map in $\cO_g$ is equal to the number of non-root faces of its closure.

Let $\o \in \cO_g$ and let  $\m=\chi(\o)$.
The definition of the closure based on labels ensures that all the corners incident to the same face $f$ of $\Phi(\o)$ have the same label, that we call \emph{height} of $f$, and denote by $h(f)$. 
Moreover, recall that for any leaf $\ell$ of $\m$, $f(\ell)$ is the face of $\Phi(\o)$ situated on the left of the closure edge associated to $\ell$. Therefore $f$ is a bijection between the set of leaves of $\m$ and $\mathcal{F}_{\Phi(\o)}\backslash f_{\rho}$, such that, additionally, the label of $\ell$ and $f(\ell)$ coincide.
Consequently, since $\m$ is bicolorable, $f$ is a bijection between black (resp. white) leaves of $o$ and black (resp. white) non-root faces of $\m$. Since the root face of $\m$ is black by definition, this concludes the proof.
\end{proof}

Combining Theorem~\ref{thm:bijbivariate} and Proposition~\ref{prop:radial} gives the following result:
\begin{corollary}\label{cor:refOp}
The closure operation combined with the radial construction gives a bijection between $\cO^\times_{g}$ and $\mathcal{M}_{g}$. And, for any $g\geq 0$, we furthermore have the following equality of generating series: 
\begin{equation}\label{eq:egMgOg}
    M_g(z_\bullet,z_\circ)=O_g^\times(z_\bullet,z_\circ).
\end{equation}
\end{corollary}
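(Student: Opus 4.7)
The statement is an immediate consequence of chaining the two bijections established earlier in the section, so my plan is simply to compose them and track the generating series along the way. First, I would invoke Proposition~\ref{prop:radial}, which provides a bijection between $\mathcal{M}_g$ and $\bc^\times_g$ under which vertices (resp.\ faces) of a map correspond to black (resp.\ white) faces of its radial, thus yielding $M_g(z_\bullet,z_\circ)=BC^\times_g(z_\bullet,z_\circ)$. Second, I would invoke the $4$-valent restriction of Theorem~\ref{thm:bijbivariate}, which gives a bijection between $\cO^\times_g$ and $\bc^\times_g$, together with the identity $O_g^\times(z_\bullet,z_\circ)=BC^\times_g(z_\bullet,z_\circ)$.

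Composing these two bijections — concretely, by sending $\m\in\mathcal{M}_g$ to the radial of $\m$ and then to its preimage under the closure operation in $\cO^\times_g$ — produces a bijection between $\mathcal{M}_g$ and $\cO^\times_g$. Chaining the two equalities of generating series then gives
\[
    M_g(z_\bullet,z_\circ) \;=\; BC^\times_g(z_\bullet,z_\circ) \;=\; O_g^\times(z_\bullet,z_\circ),
\]
which is the claimed identity~\eqref{eq:egMgOg}.

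Since both ingredients are already proved, there is no real obstacle here; the only point requiring a moment of attention is the bookkeeping of the color convention. Specifically, the asymmetric rooting convention for blossoming maps discussed in Remark~\ref{rem:asymetry} (the root bud is counted as a black leaf in the weight $z_\bullet^{|\ell_\bullet(\o)|+1}$) matches the fact that, under the closure, leaves correspond to non-root faces of the same color, with the root face of the closure being black by convention; this in turn matches the root vertex of the map under the radial construction. Verifying this color-matching explicitly is the one line of the proof worth writing out.
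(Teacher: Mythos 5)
Your proposal is correct and follows exactly the same route as the paper, which simply states this corollary as an immediate consequence of combining Proposition~\ref{prop:radial} and Theorem~\ref{thm:bijbivariate} without writing out further details. Your extra remark about checking the color convention (root bud as black leaf matching the black root face of the radial) is a nice sanity check but not an independent step, since the bookkeeping is already built into the statements of the two ingredients.
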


\begin{remark}
In view of the preceding corollary, to give a combinatorial proof of Theorem~\ref{thm:bivExprRat}, it is enough to prove that a similar result holds with $O_g^\times$ in place of $M_g$. That is why, from now on and until the end of the article, we only deal with 4-valent maps and in particular with good maps.
\end{remark}

\subsection{Analysis of good maps}\label{sec:schemes}
The now typical approach developed in~\cite{ChMaSc09} to enumerate unicellular maps consists in decomposing a map into a core (as defined in Section~\ref{sub:unicellular}) on which are grafted some trees. However, some technical issues need to be taken care of before applying this general strategy. The first of these issues, which already appeared in~\cite{Lep19}, comes from the fact that we consider well-rooted maps. 

To overcome this difficulty, we use some ``rerooting'' techniques as introduced for the planar case in~\cite{Scha97} and extended to higher genus in~\cite{Lep19}. A new complication appears, due to the fact that the distribution of white and black leaves is not stable via the rerooting operation. However, we prove in Theorem~\ref{thm:shortcut} that its effect averages on some subsets of maps, which is enough for our purposes.

\subsubsection{Rootable stems and unrooted maps}
We start with some definitions.
\begin{definition}[rootable stem, well-rootable stem]
A stem is called \emph{rootable} if it is either a leaf or the root bud. The set of rootable stems of a map $\m$ is denoted $\bar S(\m)$.
\end{definition}

\begin{definition}[unrooted map]\label{def:unroot}
Two rooted blossoming unicellular maps are called \emph{root-equivalent} if they have the same interior map and the same set of stems and of rootable stems (in particular, they do not necessarily have the same root). In other words, $\u$ and $\u'$ are root-equivalent if there exists $\sigma\in S^{\rho}(\u)$, such that $\u'$ can be obtained from $\u$ by turning the root bud of $\u$ into a leaf and transforming $\sigma$ into the root bud of $\u'$.

The \emph{unrooted map}
 of a rooted unicellular blossoming map $\u$ is the equivalence class of $\u$ for root-equivalence.
\end{definition}

\subsubsection{Blossoming cores and schemes}\label{subsub:blossoming}
The definitions of schemes and cores given in Section~\ref{sub:unicellular} for unicellular maps can be extended to unicellular blossoming maps. \emph{Blossoming cores} and \emph{blossoming schemes} are then defined as unicellular blossoming maps with only vertices of interior degree greater than 1 and 2 respectively.
\smallskip

\begin{figure}[t]
\centering
    \subcaptionbox{A good map,}
    {\includegraphics[page=1,scale=0.6]{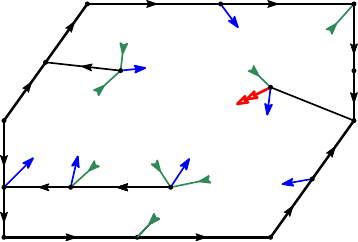}}
    \qquad\qquad
    \subcaptionbox{its (scheme-rooted) blossoming core,\label{subfig:core}}
    {\includegraphics[page=2,scale=0.6]{images/blossomingCore.pdf}}
    \qquad\qquad
    \subcaptionbox{and its blossoming scheme.\label{subfig:scheme}}
    {\includegraphics[page=3,scale=0.6]{images/blossomingCore.pdf}}
\caption{\label{fig:schemeCore}The blossoming core and scheme of a blossoming well-oriented scheme-rooted unicellular map.}    
\end{figure}
However, special care needs to be taken to handle leaves and buds when computing the core and the scheme of a unicellular blossoming map. Let $\u$ be a unicellular blossoming map. To construct its blossoming core $\Psi(\u)$, we proceed as follows. For any $v\in V(\u)$ of interior degree one, let $w$ be the unique vertex incident to $v$. Erase $v$ together with its stems, and add a stem $\sigma$ adjacent to $w$ in place of the former edge $\{v,w\}$. Repeat this procedure until there is no vertices of interior degree 1 left, see Figure~\ref{fig:schemeCore}.

In a blossoming core, vertices of interior degree 3 are called \emph{scheme vertices} and subsequently stems incident to scheme vertices are called \emph{scheme stems}. If its root bud is a scheme stem, a blossoming core is said to be \emph{scheme-rooted}.

The scheme $\Theta(\c)$ of a scheme-rooted blossoming core $\c$ is the map obtained from $\c$ by removing its vertices of interior degree 2, together with their incident stems. Vertices of interior degree $2$ in $\c$ form sequences called \emph{branches} between its scheme vertices, so that each branch of $\c$ is replaced by an edge in $\Theta(\c)$. Moreover, we identify scheme vertices and scheme stems of $\c$ with their image in $\Theta(\c)$. 
\smallskip

Note that if a unicellular map is endowed with an orientation then its core inherits its orientation. In particular, it is easy to see that the blossoming core of a well-oriented unicellular map is also well-oriented, as illustrated in Figure~\ref{fig:schemeCore}\subref{subfig:core}. Moreover, if $\c$ is a well-oriented \emph{scheme-rooted} blossoming core, the orientation of all edges inside one of its branches coincide, so that $\Theta(c)$ is also naturally endowed with an orientation. Again, it is easy to see that $\Theta(c)$ is well-oriented, see Figure~\ref{fig:schemeCore}\subref{subfig:scheme}.  
\medskip

In the rest of the paper, all the families of unicellular maps (or families in bijection with some sets of unicellular maps by \cref{thm:bijbivariate}) can be refined by specifying the unrooted scheme (or the unrooted scheme of their image by the bijection). For instance, we denote~$\mathcal{BC}^\times_{\undir{\s}}$, the subset of $\mathcal{BC}^\times$ such that $\m\in \mathcal{BC}^\times_{\undir{\s}}$ if and only if the unrooted scheme of $\chi^{-1}(\m)$ is equal to $\bar \s$.

\subsubsection{Main result}
The set of \emph{scheme-rooted well-labeled well-oriented $4$-valent blossoming cores} is denoted $\mathcal{R}$. The subset of $\mathcal{R}$ restricted to cores of genus $g$ with $n_\bullet-1$ black leaves and $n_\circ$ white leaves is denoted $\mathcal{R}_{g;n_\bullet,n_\circ}$, and the generating series $R(z_\bullet,z_\circ)$ is hence defined as: 
\[
    R(z_\bullet,z_\circ)=\sum_{n_\bullet\geq 0, n_{\circ}\geq 0}|\mathcal{R}_{g;n_\bullet,n_\circ}|z^{n_\bullet}_{\bullet}z^{n_\circ}_{\circ}.
\]
We can now state the main result of this section: 
\begin{theorem}\label{thm:shortcut}
Let $\r\in\mathcal{R}$. Then, the following equality of generating series holds: 
\begin{equation}
    O^{\times}_{\overline{\r}}(z_\bullet,z_\circ)=
    \frac{R_{\overline{\r}}(T_\bullet,T_\circ)+R_{\overline{\r}}(T_\circ,T_\bullet)}
    {2g-|\mathring n_2(\overline{\r})|},
\end{equation}
where $\mathring n_2(\overline{\r})$ denotes the set of vertices of interior degree $4$ in $\overline{\r}$ and $T_\bullet$ and $T_\circ$ are the generating series defined in~\eqref{eq:defTnoirTblanc} of Theorem~\ref{thm:bivExprRat}. 
\end{theorem}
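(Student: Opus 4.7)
The plan is to interpret the right-hand side of \cref{thm:shortcut} as an enumeration of suitably rooted ``inflated'' blossoming cores, and to match it to $O^\times_{\overline{\r}}$ via a rerooting / averaging argument.

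\textbf{Step 1 (tree substitution).} First I would interpret $R_{\overline{\r}}(T_\bullet, T_\circ)$ combinatorially. Given a scheme-rooted core $\r \in \mathcal{R}_{\overline{\r}}$, each of its stems carries a natural colour --- black or white --- according to the parity of the label of its adjacent corner. I would show that substituting at each black stem a $4$-valent blossoming tree enumerated by $T_\bullet$, and at each white stem a tree enumerated by $T_\circ$, produces a well-oriented, well-labeled, $4$-valent unicellular blossoming map whose unrooted scheme is still $\overline{\r}$. The consistency of the substitution at every internal vertex is exactly captured by the fixed-point system \eqref{eq:defTnoirTblanc}. Summing over $\mathcal{R}_{\overline{\r}}$ thus yields $R_{\overline{\r}}(T_\bullet, T_\circ)$, and the companion term $R_{\overline{\r}}(T_\circ, T_\bullet)$ represents the same construction carried out with the opposite colour convention, corresponding to scheme-rooting the core at a stem whose canonical label has the opposite parity (equivalently, to flipping the bicolouring of the associated closed map).

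\textbf{Step 2 (comparison with well-rootings).} The maps produced in Step 1 are scheme-rooted, not well-rooted, so they are not immediately elements of $\cO^\times_{\overline{\r}}$. I would then compare the two rooting conventions at the level of unrooted equivalence classes: each class contributes to $O^\times_{\overline{\r}}$ via its well-rooted representatives, and to the numerator of the right-hand side via its scheme-rooted rerootings (summed over both colourings). Euler's formula applied to a $4$-valent scheme of genus $g$ yields $n_3 + 2 n_4 = 4g - 2$ with $n_4 = |\mathring n_2(\overline{\r})|$, so the scheme carries exactly $n_3 = 2(2g - n_4) - 2$ scheme stems. The denominator $2g - n_4 = (n_3 + 2)/2$ is therefore a natural combinatorial quantity arising from the $n_3$ scheme stems together with two additional contributions linked to the root face, all halved by the black/white symmetrisation already present in the numerator.

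\textbf{Step 3 (balance identity, main obstacle).} The heart of the proof is the \emph{balance identity}: for each unrooted class of good maps with scheme $\overline{\r}$, the sum of scheme-rooted rerootings across both colour conventions equals $2g - n_4$ times the number of well-rooted rerootings. The rerooting operation is notoriously ill-behaved at the level of individual maps --- it does not preserve well-rootedness, and label-minima shift in complicated ways --- so the identity must be proved by averaging over the whole equivalence class rather than map-by-map. The natural tool, foreshadowed in the introduction and developed in \cref{sec:motzkin}, is to encode the canonical contour of a unicellular map as a weighted Motzkin path whose cyclic shifts correspond exactly to rerootings: well-rootings are the shifts landing on global minima, scheme-rootings are those landing on scheme steps, and a cyclic-lemma-style argument then yields the ratio $2g - n_4$ uniformly across equivalence classes. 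Making this averaging work cleanly while respecting the bivariate black/white weights is the main technical obstacle, and represents the central novelty of this paper over the univariate analysis of \cite{Lep19}.
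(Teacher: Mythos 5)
Your high-level plan captures the right ingredients --- tree substitution at the stems, a rerooting step, and a count of markable positions equal to $2g - |\mathring n_2(\overline{\r})|$ --- but the specific mechanism you propose in Steps~2 and~3 departs from what is needed and does not clearly close the argument.

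The actual engine of the paper's proof is a \emph{direct 2-to-1 bijection} (Proposition~\ref{prop:2to1}): a good map $\o$ with a marked scheme trunk $\tau$ is sent to the scheme-rooted tree-decorated core $\o^\tau$ obtained by rerooting the pruned core at $\tau$, and each such decorated core has \emph{exactly two} well-rooted preimages $\o$ and $\tilde\o$ (the second obtained by rerooting $\o$ at the leaf matched to its root bud). The crucial bivariate ingredient is the observation that passing from $\o$ to $\tilde\o$ flips the parity of \emph{every} corner label simultaneously, so that $|\ell^\circ(\o)| = |\ell^\bullet(\tilde\o)|+1$ and vice versa. This is precisely what produces the two terms $R_{\overline{\r}}(T_\bullet,T_\circ)$ and $R_{\overline{\r}}(T_\circ,T_\bullet)$ in the numerator. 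You gesture at a ``balance identity'' that would deliver the factor $2g - n_4$ by a cyclic-lemma argument on Motzkin encodings, but this is not how the denominator arises here: the number $2g-|\mathring n_2(\overline{\r})|$ is \emph{exactly} the number of scheme trunks of any good map with unrooted scheme $\overline{\r}$ (this is \cite[Lemma~4.7]{Lep19}), so that $\big(2g-|\mathring n_2(\overline{\r})|\big)\,O^\times_{\overline{\r}}$ is literally the generating series of good maps with a marked scheme trunk, with no averaging needed. The Motzkin-path machinery of Section~\ref{sec:motzkin} is used elsewhere in the paper (to compute $R_\l$ for fixed labeled scheme), not in this rerooting step.

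Your Step~2 also misreads the source of the denominator: the identity $2g - n_4 = (n_3+2)/2$ is a numerical coincidence of Euler's formula and is \emph{not} a ``halving by black/white symmetrisation''; the symmetrisation is entirely accounted for by the sum of two $R$-terms in the numerator, and the denominator is an honest count of marked positions. Finally, your Step~1 colours the stems of the core black or white according to adjacent-label parity and substitutes $T_\bullet$ or $T_\circ$ accordingly; this is essentially correct, but the subtle point the paper must handle --- and which you do not address --- is the off-by-one in the black-leaf count: the root bud of $\o$ is \emph{not} a rootable stem of the core, and under the tree substitution it reappears as a leaf inside one of the grafted trees, which is why the exponent $|\ell_\bullet(\o)|+1$ appears. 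Without the explicit 2-to-1 correspondence and its parity-flip property, it is unclear how your cyclic-shift averaging would organize the bookkeeping so that both terms $R_{\overline{\r}}(T_\bullet,T_\circ)$ and $R_{\overline{\r}}(T_\circ,T_\bullet)$ emerge cleanly with the correct denominator.
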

This result is a generalization of~\cite[Lemma~4.12]{Lep19}. It requires two rerooting steps coupled with a pruning procedure, along with an analysis of the evolution of the parity of labels during the rerooting. Its proof requires to introduce a couple of additional definitions and is the purpose of the next section. 

\subsection{Proof of Theorem~\ref{thm:shortcut}}
\subsubsection{Combinatorial interpretation of $T_\bullet$ and $T_\circ$}
As mentioned above, when constructing the core of a unicellular blossoming map, the deleted edges form a forest of rooted blossoming plane trees. We now characterize these trees. 

\begin{figure}[t]
\centering
\includegraphics[scale=1]{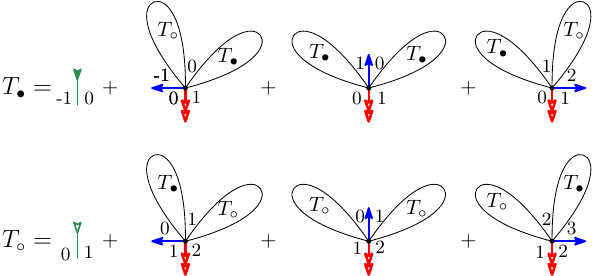}
\caption{\label{fig:decTree}Schematic representation of the decomposition of trees in $\mathcal{T}^\times$ that translate into the relations \eqref{eq:dectree} for generating series. Labels around the root vertex are indicated.}
\end{figure}

Let $\mathcal{T}^\times$
 be the set of well-oriented, well-labeled, 4-valent unicellular rooted maps of genus 0. We adopt the convention that the map reduced to a single leaf with no vertex belongs to $\mathcal{T}^\times$. 
Elements of $\cT^\times$ can alternatively be characterized as follows. They  are 4-valent blossoming plane trees with a marked bud, in which all the interior edges are oriented from a vertex to its parent. Moreover, the fact that they are well-labeled implies that each interior vertex of $\t\in \cT^\times$ is incident to exactly one bud, except for the root vertex which is incident to two buds. With this reformulation, it is clear that $\cT^\times$ corresponds exactly to the family of trees originally introduced by Schaeffer in~\cite{Scha97}.
\smallskip

Next, recall that in a well-labeled blossoming map, leaves are incident to two corners which are labeled $i$ and $i-1$ in counterclockwise order around its incident vertex. If $i$ is even (respectively odd), the corresponding leaf is called a \emph{black leaf} (respectively a \emph{white leaf}). In the sequel, it will be useful to introduce the two following bivariate generating series $T_\bullet$ and $T_\circ$ to enumerate elements of $\cT^\times$: 

\begin{equation}\label{eq:dectree}
T_{\bullet}(z_\bullet,z_\circ) = z_\bullet+\sum_{\substack{t \in \mathcal{T}^\times\\t\text{ not empty}}}z_\bullet^{|\ell_\bullet(\t)|}z_\circ^{|\ell_\circ(\t)|} \qquad\text{and}\qquad
T_{\circ}(z_\bullet,z_\circ) = z_\circ+\sum_{\substack{t \in \mathcal{T}^\times\\t\text{ not empty}}}z_\bullet^{|\ell_\circ(\t)|}z_\circ^{|\ell_\bullet(\t)|}.
\end{equation}
Alternatively $T_\circ$ can be defined as the generating series of elements of $\cT^\times$, in which all the labels are shifted by 1, so that white leaves become black leaves and vice-versa. With this remark in mind, standard decomposition of trees as illustrated in~Figure~\ref{fig:decTree} allows to retrieve the following equations already given in \eqref{eq:defTnoirTblanc}:
\begin{equation}
\begin{cases}
T_\bullet &= z_\bullet +T_\bullet^2 + 2T_\circ T_\bullet\\ T_\circ&=z_{\circ}+T_{\circ}^2+2T_\circ T_{\bullet}.
\end{cases}
\end{equation}
These equations uniquely characterize $T_\bullet$ and $T_\circ$ as formal power series in two variables without constant term. Hence, this provides a combinatorial interpretation of the series involved in the rational parametrization of Theorem~\ref{thm:bivExprRat}.

\subsubsection{Scheme trunks and rerooting}
\begin{definition}
Let $\o$ be a good blossoming unicellular map. A \emph{scheme trunk} of $\o$ is either: 
\begin{itemize}
    \item an edge incident to a scheme vertex, which does not belong to the core (i.e, it is the ``trunk'' of a tree attached to the scheme),
    \item or, a rootable stem incident to a scheme vertex. 
\end{itemize}
\end{definition}

\begin{definition}
Let $\r\in \mathcal{R}$. 
A \emph{4-valent scheme-rooted tree-decorated core} is defined as $\r^\star=\big(\r,(\t_\sigma)_{\sigma\in \bar S(\r)}\big)$, where $(\t_\sigma)$ is a collection of trees in $\mathcal{T}^\times$ indexed by the rootable stems of $\r$.

We denote by $\mathcal{D}$ the set of 4-valent \emph{scheme-rooted tree-decorated cores}. 
\end{definition}

\begin{figure}
\centering
\includegraphics[width=0.9\linewidth,page=5]{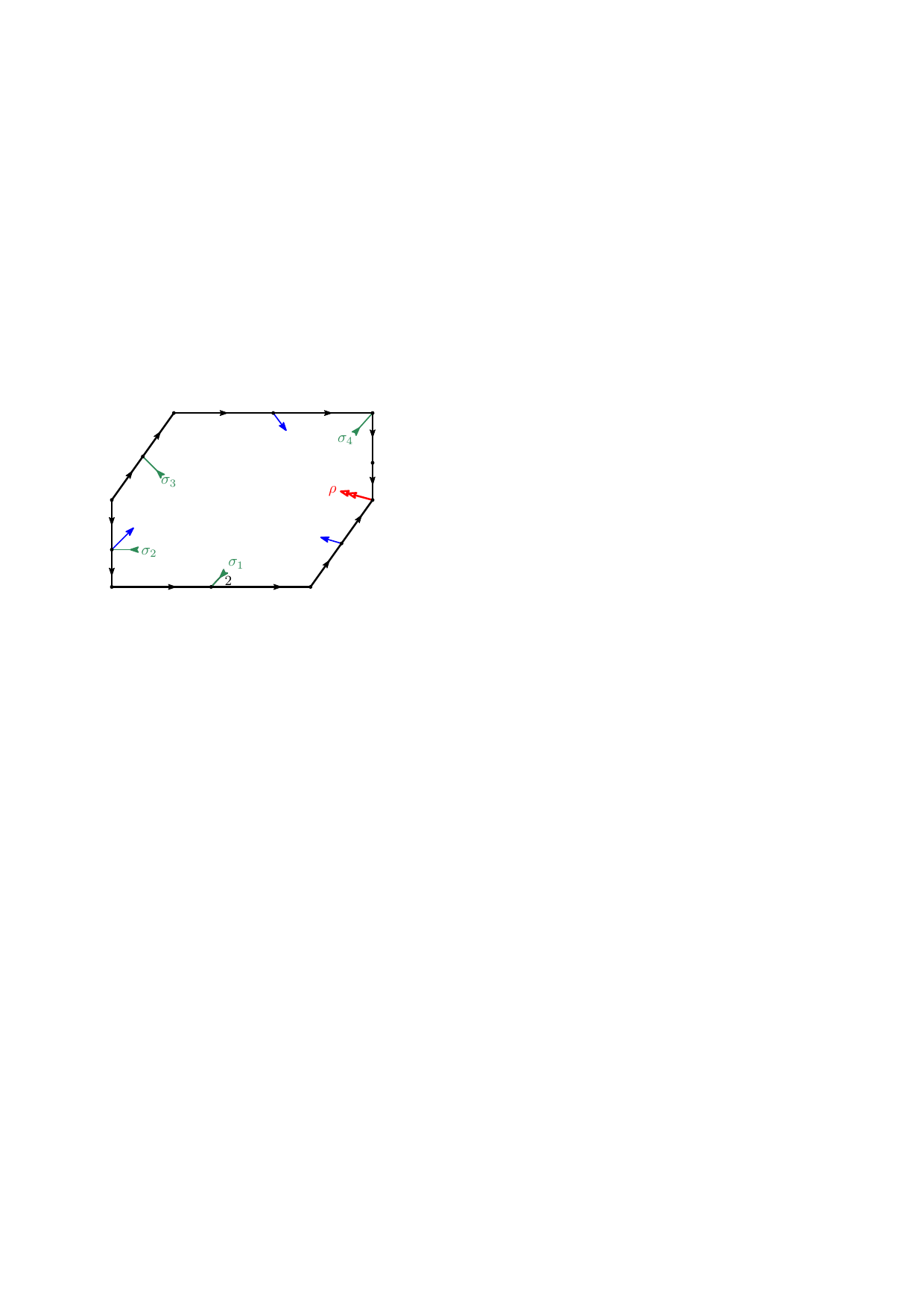}
\caption{\label{fig:decoratedCore}The correspondence between a scheme-rooted decorated core (left) and a non-rooted blossoming map with a marked scheme trunk (right)}
\end{figure}

Let $\s^\star=(\s,(\t_\ell)) \in \mathcal{D}$. A unicellular non-rooted blossoming map with a marked scheme trunk is canonically associated to $\s^\star$ by the following construction, see Figure~\ref{fig:decoratedCore}. 
For any leaf $\sigma$ of $\s$, if $\t_\sigma$ is empty, do nothing; otherwise graft $\t_{\sigma}$ to $\s$ by merging the root bud of $\t_{\sigma}$ with $\sigma$. 
Let $\rho$ be the root bud of $\s$, graft $\t_\rho$ to $\s$ by either merging its root bud with $\rho$ and marking the edge thus created if $\t_\rho$ is not empty or by only marking $\rho$ otherwise. 
In the following, we \emph{identify scheme-rooted tree-decorated cores with the corresponding blossoming maps with a marked scheme trunk}.
\smallskip

Fix $\o\in \mathcal{O}^\times$ and $\tau$ a scheme-trunk of $\o$. Then, define $\o^\tau$ to be the scheme-rooted tree-decorated core obtained by replacing the root bud of $\o$ by a leaf and by marking $\tau$, see Figures~\ref{fig:trunk}\subref{subfig:markedTrunk1} and~\ref{fig:trunk}\subref{subfig:otau}.

\begin{proposition}\label{prop:2to1}
Let $\o \in \mathcal{O}^\times$ and let $\tau$ be a scheme trunk of $\o$. Then the application which maps $(\o,\tau)$ to $\o^\tau$ is a 2-to-1 application between good maps with a marked scheme trunk and $\mathcal{D}$. 

Moreover, if we call $\tilde \o$, the other preimage of $\o^\tau$, then: 
\[
|\ell^\circ(\o)|=|\ell^\bullet(\tilde \o)|+1,\quad \text{ and }\quad  |\ell^\circ(\tilde \o)|=|\ell^\bullet( \o)|+1. 
\]
\end{proposition}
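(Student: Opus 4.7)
The plan is to invert the construction $(\o,\tau)\mapsto \o^\tau$ for a given element of $\mathcal{D}$ and count its preimages. First, since $\o^\tau$ retains the marking $\tau$, the scheme trunk is recovered unambiguously, and the only missing datum is the location of the original root bud $\rho$ of $\o$, which the construction turned into a leaf. Thus recovering $\o$ amounts to selecting some leaf of $\o^\tau$, promoting it back to a bud, and declaring it the new root bud. The task is to prove that exactly two such leaf choices yield a good map, and then to track how the colors of leaves transform between the two preimages.

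I would next use the canonical labeling introduced in Section~\ref{sub:closureLabels} to characterize valid promotions. View the clockwise contour of $\o^\tau$ as a cyclic Motzkin-type path, with steps $+1$ for each bud, $-1$ for each leaf, and $0$ for each side of an interior edge. Since $\o$ had equally many buds and leaves while $\o^\tau$ has one bud fewer and one leaf more (namely the old root bud $\rho$), the contour of $\o^\tau$ has net sum $-2$. Promoting a leaf $\ell$ to a bud flips its step from $-1$ to $+1$, restoring the balance; the well-rootedness criterion, which by the discussion of Section~\ref{sub:closureLabels} is equivalent to the labeling being non-negative, then translates to a concrete inequality comparing the label of $\ell$ to the partial minima of the contour.

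A cycle-lemma-style argument adapted to Motzkin paths with net step $-2$ then shows that exactly two leaves, call them $\rho$ and $\tilde\rho$, satisfy this inequality: they are precisely the two leaves whose associated corner attains the global minimum of the contour, up to a one-off adjustment forced by the $-2$ net step. I would also verify that at both choices the well-labeled and well-oriented conditions are inherited from the closure framework of Section~\ref{sec:closure} so that neither candidate is ruled out for external reasons. The main technical obstacle of the argument is this ``exactly two'' count, which requires the careful cyclic analysis sketched above and where one must distinguish the two extremal positions of the modified contour from all the other candidates.

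Finally, for the color-swap identities, I would observe that, because the contour arc between $\rho$ and $\tilde\rho$ must absorb an odd net change (a direct consequence of the $-2$ imbalance being split between the two extrema), the canonical labelings of $\o$ and of $\tilde\o$ differ globally by a shift of odd parity. This flips the parity of the label of every corner and stem, and thereby exchanges the roles of black and white leaves. The extra $+1$ in each identity is precisely the bookkeeping of Remark~\ref{rem:asymetry}: the root bud is counted as a virtual black leaf, so the leaf that plays the role of the root bud in $\o$ but appears as an ordinary (and now recolored) leaf in $\tilde\o$ accounts for the off-by-one shift between $|\ell^\circ(\o)|$ and $|\ell^\bullet(\tilde\o)|$, and symmetrically for the swapped identity.
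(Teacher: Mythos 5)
Your plan to establish the $2$-to-$1$ count takes a genuinely different route from the paper's, but as written it contains a real gap precisely at the step you flag as the ``main technical obstacle.'' The paper constructs the second preimage concretely: it is $\o$ rerooted at the leaf $\ell$ that the closure matches to the root bud, and \cite[Lemma~4.2]{Lep19} guarantees that rerooting at a rootable stem preserves the well-labeled and well-oriented properties, while the fact that at most two well-rooted maps admit the same unrooted map supplies the upper bound. Your route instead tries to count admissible promotions directly via a cycle-lemma argument on the contour of $\o^\tau$, which has net step $-2$. The difficulty is that this is \emph{not} a standard cycle lemma: promoting a leaf $\ell$ turns a $-1$ step into $+1$, so each candidate $j$ is tested against a \emph{different} modified walk, not a rotation of a single fixed walk. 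The proposed description of the two winners as ``the two leaves whose associated corner attains the global minimum of the contour, up to a one-off adjustment'' is not correct as stated — already for the four-step cyclic sequence $(-1,-1,-1,+1)$ the two valid positions are the leaves of ranks $2$ and $3$, and only one of their adjacent corners achieves the global minimum — and would have to be replaced by a precise statement with its own proof. Likewise, the well-labeled and well-oriented checks for the promoted maps are acknowledged but left undone, whereas the paper handles them by a cited lemma; these are exactly the facts that make the cyclic inequality the only binding constraint, so the argument is incomplete without them.

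There is also a smaller imprecision in the color-swap argument. You claim the canonical labelings of $\o$ and $\tilde\o$ ``differ globally by a shift of odd parity,'' and hence that \emph{every} corner and \emph{every} stem flips parity. In fact the label shift is $+1$ on one arc of the contour and $-1$ on the complementary arc, so all \emph{corners} flip parity, but the two special stems $\rho_\o$ and $\ell$ — exactly the ones whose two adjacent corners shift in opposite directions — keep label $1$ in both $\o$ and $\tilde\o$ and do \emph{not} flip parity. The final identities still hold because these two stems simultaneously exchange bud/leaf status, so the virtual-black-leaf bookkeeping of Remark~\ref{rem:asymetry} applies to them directly rather than through a parity flip; taken literally, though, ``every stem flips parity'' would produce the wrong off-by-one in $|\ell^\circ(\o)|=|\ell^\bullet(\tilde\o)|+1$. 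The paper's own shorthand in this passage is similarly terse, but if you want the cycle-lemma route to be self-contained you need to make this bookkeeping explicit rather than appeal to a global parity shift that does not literally occur.
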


\begin{proof}
We first prove that $\o^\tau$ belongs to $\mathcal{D}$, which amounts to showing that $\Psi(\o^\tau)$ -- the scheme of $\o^\tau$ --   belongs to $\mathcal{R}$. First, it is easy to see that the blossoming core of a good map is itself well-labeled and well-oriented (when considering the restriction of the orientation of the original map to the edges that belong to the core). Therefore $\c:=\Psi(\o)$ is a well-labeled well-oriented 4-valent unicellular blossoming map (but it is not scheme-rooted).  

By a slight abuse of notation, we denote by $\tau$ the rootable stem of $\c$ which corresponds to $\tau$, see Figure~\ref{fig:trunk}\subref{subfig:Psio}. 
Then, define $\c^\tau$ to be the map obtained from $\c$ by changing the root bud into a leaf and changing $\tau$ into the root bud, in other words we ``reroot'' $\c$ at $\tau$, see Figure~\ref{fig:trunk}\subref{subfig:Psiotau}. The proof of \cite[Lemma~4.2]{Lep19} can be applied verbatim to prove that $\c^\tau$ is still well-labeled and well-oriented, hence it belongs to $\mathcal{R}$.
\smallskip

Now, let $\ell$ be the leaf which is matched to the root bud of $\o$ in its closure. Reroot $\o$ at $\ell$ (i.e. transform its root bud into a leaf and turn $\ell$ into the new root bud), and call $\tilde \o$ the resulting map. Again \cite[Lemma~4.2]{Lep19} directly implies that $\tilde \o$ is still a good map, and clearly $(\tilde \o)^\tau=\o^\tau$. Note that there might exist some non-trivial automorphisms and that $\o$ and $\tilde \o$ could be equal, but as maps with a marked scheme trunk $(\o,\tau)$ and $(\tilde \o, \tau)$ are necessarily different. So that $(\tilde \o,\tau)$ is another preimage of $\o^\tau$. 

To prove that there exists no other preimage of $\o^\tau$ in $\mathcal{O}^\times$, it suffices to notice that any other preimage is necessarily root-equivalent to $\o$ and that at most two different well-rooted maps can admit the same unrooted map. 
\smallskip

To finish the proof, observe that, generally, if we pick a rootable stem of $\o$ and reroot $\o$ at this stem, then in the new map, either all labels switch parity or none do. Moreover, they switch parity if and only if the picked rootable stem is preceded by a corner with an odd label in $\o$. It implies that all labels in $\o^\tau$ have different parity than in $\o$, see also Figure~\ref{fig:trunk}\subref{subfig:markedTrunk2}. By seeing momentarily the root bud as a black leaf (as was done in Remark~\ref{rem:asymetry}), it yields a bijection between black leaves of $\o$ and white leaves of $\tilde \o$ and vice-versa, which concludes the proof. 
\end{proof} 

\begin{figure}[t]
  \centering
    \subcaptionbox{\label{subfig:markedTrunk1}A good map $\o$ with a marked scheme trunk.}[.46\linewidth]
    {
    \includegraphics[page=1,scale=0.8]{images/Rooting.pdf}
    }\qquad
 \subcaptionbox{\label{subfig:otau}The scheme-rooted decorated core $\o^\tau$.}[.46\linewidth]
    {
    \includegraphics[page=2,scale=0.8]{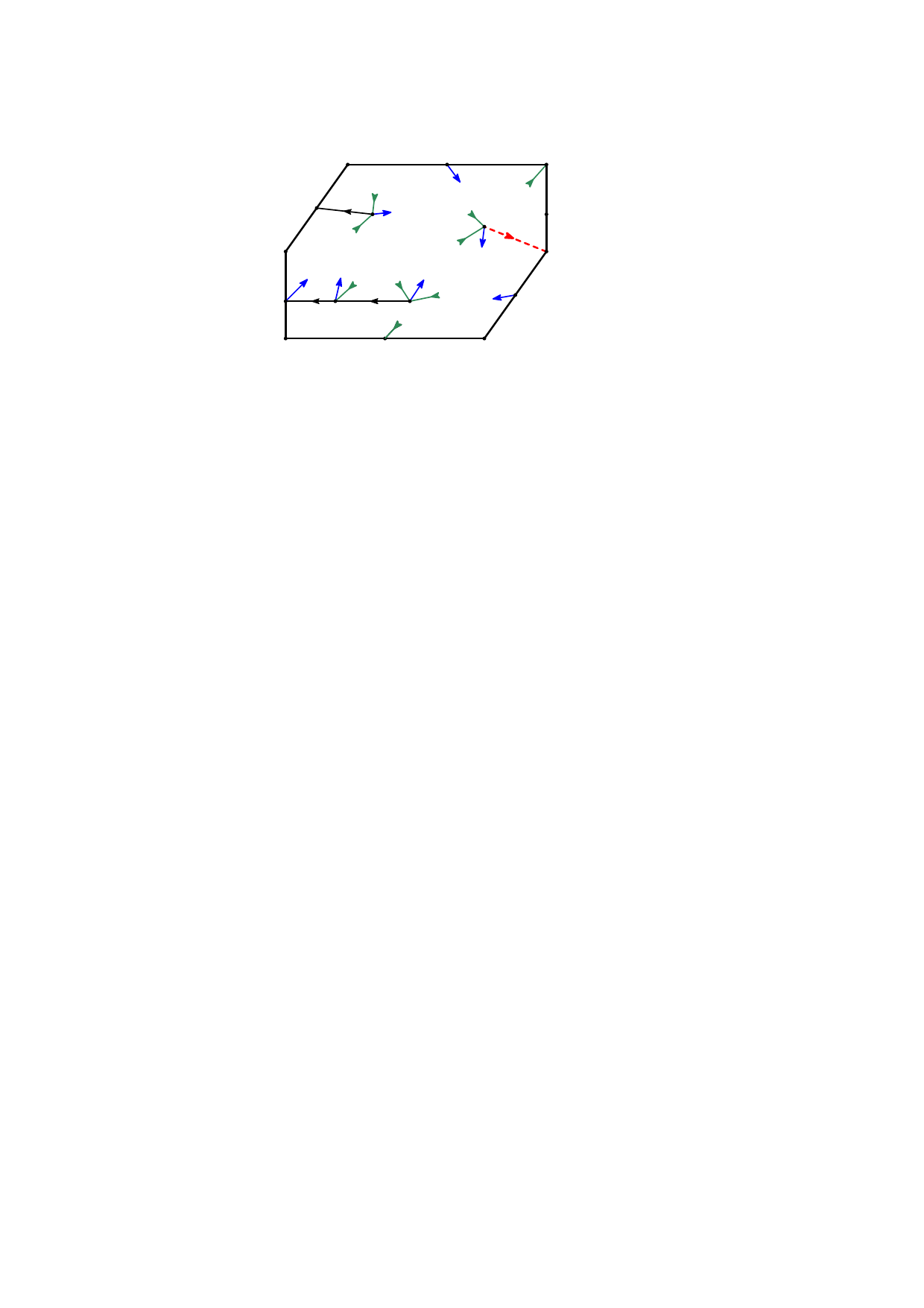}
    }\qquad 
    \subcaptionbox{\label{subfig:Psio}The map $\Psi(\o)$.}[.46\linewidth]
    {
    \includegraphics[page=3,scale=0.8]{images/PsiO.pdf}
    }\qquad
    \subcaptionbox{\label{subfig:Psiotau}The map $\Psi(\o)^\tau$.}[.46\linewidth]
    {
    \includegraphics[page=4,scale=0.8]{images/Rooting.pdf}
    }\qquad
   
    \subcaptionbox{\label{subfig:markedTrunk2}$\tilde \o$, the other preimage of $\o^\tau$.}[.46\linewidth]
    {
    \includegraphics[page=3,scale=0.8]{images/Rooting.pdf}
    }
\caption{\label{fig:trunk}The different maps involved in the proof of Theorem~\ref{thm:shortcut}}  
\end{figure}
\begin{proof}[Proof of Theorem~\ref{thm:shortcut}]
Let $\o\in \mathcal{O}^\times_{\bar \r}$. It follows directly from the definition of pruning, that the number of scheme trunks of $\o$ is equal to the number of rootable scheme stems of its core. By \cite[Lemma~4.7]{Lep19}, a blossoming core $\c$ has $2g-|\mathring n_2(\c)|$ rootable scheme stems (where $g$ denotes the genus of the underlying surface). Hence, $\o$ has $2g-|\mathring n_2(\Psi(\o))|=2g-|\mathring n_2(\bar \r)|$ scheme trunks and $\big(2g-|\mathring n_2(\bar \r)|\big)O^\times_{\bar \r}(z_\bullet,z_\circ)$ is the generating series of elements of $\mathcal{O}^\times_{\bar \r}$ with a marked scheme trunk. 
\smallskip

Then the result relies mostly on Proposition~\ref{prop:2to1} and on the classical relation between decomposition of combinatorial classes and composition of generating functions. However, some subtleties appear. 

First, observe that, in the pruning operation, if a dangling tree not containing the root bud is replaced by a leaf, then the labels of the corners incident to that leaf are equal to the corresponding corners before deleting the tree. It implies that the labeling of the corners of $\Psi(\o)$ have all either the same parity as the corresponding corners in $\o$ or the opposite parity. 

Fix $\tau$ a scheme-trunk of $\o$, and consider $\r^\star = (\r,(\t_\sigma)_{\sigma\in \bar S(\r)}\big)$ the scheme-decorated core associated to $\o$ by Proposition~\ref{prop:2to1}. Then $\r$ and $\Psi(\o)$ are root-equivalent and again either their corners have all the same parity or have all opposite parity. Therefore, when substituting the rootable stems of $\r$ by the trees of the family $(\t_\sigma)$, we either have: 
\begin{equation}\label{eq:1stposs}
    |\ell^\circ(\o)|=|\ell^\circ(\t_{\rho_\r})|+\sum_{\sigma\in \ell^\bullet(\r)}|\ell^\circ(\t_\sigma)|+\sum_{\sigma\in \ell^\circ(\r)}|\ell^\bullet(\t_\sigma)|
\end{equation}
or
\begin{equation}\label{eq:2ndposs}
 |\ell^\circ(\o)|=|\ell^\bullet(\t_{\rho_\r})|+\sum_{\sigma\in \ell^\bullet(\r)}|\ell^\bullet(\t_\sigma)|+\sum_{\sigma\in \ell^\circ(\r)}|\ell^\circ(\t_\sigma)|.
\end{equation}
Similar equalities hold for the number of black leaves of $\o$, but another subtlety appears. Indeed, when replacing leaves of $\r$ by trees, the root bud of $\o$ corresponds in fact to a leaf of one of these trees. More precisely, suppose that \eqref{eq:1stposs} holds, then we have (and a similar equality if \eqref{eq:2ndposs} holds): 
\[
    |\ell^\bullet(\o)|+1=|\ell^\bullet(\t_{\rho_\r})|+\sum_{\sigma\in \ell^\bullet(\r)}|\ell^\bullet(\t_\sigma)|+\sum_{\sigma\in \ell^\circ(\r)}|\ell^\circ(\t_\sigma)|.
\]
It concludes the proof, in view of the definition of the generating series $O^\times$ and $R$ and of the last assertion of Proposition~\ref{prop:2to1}.
\end{proof}

\section{Criterion for rationality via Motzkin walks}
\label{sec:motzkin}
In this section, we extend the criterion for rationality obtained in~\cite{ChMaSc09} to the bivariate case. We first introduce in Section~\ref{sec:Motz} a family of weighted paths that will appear naturally in the enumeration of blossoming cores.  In Section~\ref{sec:symRat}, we give a criterion for rationality based on these paths. 

\subsection{Weighted Motzkin paths}\label{sec:Motz}
A \emph{Motzkin walk $\w$ of length $\ell$ starting at height $h$ } is a walk in $\mathbb{Z}_{\geq 0} \times \mathbb{Z}$ starting at $(0,h)$ made of $\ell$ steps $w_1,\ldots,w_\ell$, such that $w_i\in\{(1,-1),(1,0),(1,1)\}$, for $1\leq i \leq \ell$. The set of Motzkin walks is denoted by $\mathcal{W}$. A step $w_i$ is called \emph{horizontal}, \emph{up} or \emph{down} if its second coordinate is respectively equal to $0$, $+1$ or $-1$. For $1\leq k \leq \ell$, the \emph{height} at time $k$ is equal to the ordinate of the walk when its abscissa is equal to $k$ and the \emph{height of the $k$-th step} is the height at time $k-1$. 
The \emph{increment} of $\w$ is the difference between the height at time~$\ell$ and the height at time 0. 

 A \emph{Motzkin bridge} is a Motzkin walk starting at height $0$ whose increment is equal to 0. 
 More generally, for $i,j\in \mathbb{Z}$, we denote $\mathcal{W}^{i\rightarrow j}$ the set of Motzkin walks starting at height $i$, with increment $j-i$. 
 A \emph{primitive Motzkin walk} is a Motzkin walk starting at height $0$ with increment $-1$ and such that the height of each step is non-negative. 

For $\w\in \mathcal{W}$, we denote respectively by $h(\w)$, $o(\w)$ and $e(\w)$, its number of horizontal steps, its number of non-horizontal steps with odd height (called odd steps) and its number of non-horizontal steps with even height (called even steps). For any $i,j\in \mathbb{Z}$, we introduce the following (weighted) generating series which will appear naturally in the next section: 
\begin{equation}\label{eq:defWMotzkin}
W^{i\rightarrow j}(t_\bullet,t_\circ) = \sum_{\w \in\mathcal{W}^{i\rightarrow j} }(2(t_{\bullet}+t_\circ))^{h(\w)}t_{\bullet}^{e(\w)}t_{\circ}^{o(\w)}
\end{equation}
We also consider the following generating series enumerating primitive Motzkin walks and Motzkin bridges:
\begin{align}
D_{\bullet}(t_\bullet,t_\circ) &= \sum_{\substack{\w \text{ primitive}\\ \text{Motzkin walks}}}(2(t_{\bullet}+t_\circ))^{h(\w)}t_{\bullet}^{e(\w)}t_{\circ}^{o(\w)}\\
D_{\circ}(t_\bullet,t_\circ) &= \sum_{\substack{\w \text{ primitive}\\ \text{Motzkin walks}}}(2(t_{\bullet}+t_\circ))^{h(\w)}t_{\circ}^{e(\w)}t_{\bullet}^{o(\w)}\\
B(t_\bullet,t_\circ) &= \sum_{\w \text{ Motzkin bridges}}(2(t_{\bullet}+t_\circ))^{h(\w)}t_{\bullet}^{e(\w)}t_{\circ}^{o(\w)}
\end{align}
Note, that $D_\circ$ could be equivalently defined as the weighted generating series of shifted primitive Motzkin walks, i.e starting at height 1, with increment -1 and staying positive before their last step. More generally, for $k\in \mathbb{Z}$, denote by $W_{[\geq k]}^{k \rightarrow (k-1)}(t_\bullet,t_\circ)$ the (weighted) generating series of Motzkin walks, with height not smaller than $k$, except after their last step. It is immediate to see that 
\[
W_{[\geq k]}^{k \rightarrow (k-1)}(t_\bullet,t_\circ)=
\begin{cases}
D_\bullet &\text{if }k\text{ is even}\\
D_\circ &\text{otherwise.}
\end{cases}
\]
\begin{figure}[t]
    \centering
    \includegraphics[scale=1]{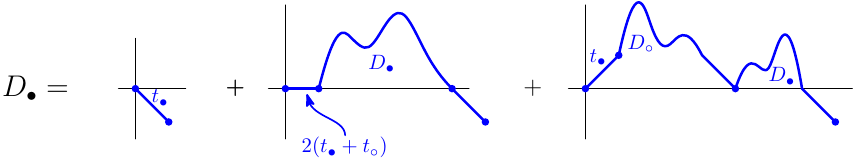}
    \caption{Schematic decomposition of primitive Motzkin walks.}
    \label{fig:decMotzkin}
\end{figure}

\begin{property}\label{prop:eqDt}
The generating series $D_\bullet$, $D_{\circ}$ and $B$ satisfy the following ``symmetric'' relations: 
\begin{equation}\label{eq:Bsym}
B(t_\bullet,t_\circ)=B(t_\circ,t_\bullet)\qquad\text{and}\qquad
t_\circ D_\bullet = t_\bullet D_\circ,
\end{equation}
and the following decomposition equations:
\begin{align}
D_\bullet&=t_\bullet+2(t_\bullet+t_\circ)D_\bullet+t_\bullet\cdot D_\circ D_\bullet,\\
D_\circ&=t_\circ+2(t_\bullet+t_\circ)D_\circ+t_\circ\cdot D_\bullet D_\circ,\\
B&=1+2(t_\bullet+t_\circ)B+(t_\bullet D_\circ + t_\circ D_\bullet)B.
\end{align}
\end{property}
\begin{proof}
In a Motzkin bridge, the number of odd up-steps is clearly equal to the number of even down-steps and the number of even up-steps is equal to the number of odd down-steps, so that the number of odd steps is equal to the number of even steps.
Therefore, a symmetry around a vertical axis gives the first equality. 
The same argument applied to positive bridges (enumerated by $\frac{D_\bullet}{t_\bullet}$) proves the second equality. 
\smallskip

It follows directly from \eqref{eq:Bsym}, that for any $k\in \mathbb{Z}$, $B(t_\bullet,t_\circ)=W^{k\rightarrow k}(t_\bullet,t_\circ)$. 
Hence, the three decomposition equations are obtained by the first-passage to 0 classical decomposition (see e.g.~\cite[Section~I.5.3]{FlaSed}), illustrated in Figure~\ref{fig:decMotzkin} in the case of $D_\bullet$.
\end{proof}

\subsection{Symmetry and rationality}\label{sec:symRat}
The relations stated in~\cref{prop:eqDt} imply that $t_\bullet$ and $t_\circ$ might be viewed as rational functions in $D_\bullet$ and $D_\circ$ defined as follows:
\begin{align}
\label{eq:tbul}
t_\bullet (D_\bullet,D_\circ)&=\frac{1}{D_\circ+2\left( \frac{D_\circ}{D_\bullet}+1\right) +\frac{1}{D_\bullet}},\\
\label{eq:tcir}
t_\circ (D_\bullet,D_\circ)&=\frac{1}{D_\bullet+2\left( \frac{D_\bullet}{D_\circ}+1\right) +\frac{1}{D_\circ}}.
\end{align} 
This expression implies in particular that any function in the two variables $t_\bullet$ and $t_{\circ}$ can be seen as a function in $D_{\bullet}$ and $D_\bullet$.

\begin{definition}
Let $f(x,y)$ be a function of two variables, we say that
\begin{itemize}
\item  $f$ is \emph{symmetric} if $f(x,y)=f(y,x)$,
\item $f$ is \emph{$\parallel$-symmetric} if 
$f(x,y)=f(x^{-1} ,y^{-1})$.
\item $f$ is \emph{$\times$-symmetric} if 
$f(x,y)=f(y^{-1} ,x^{-1})$.
\end{itemize}
We denote by $f^{\yy}$ the function defined by $f^{\yy}=f(x,y)+f(y,x)$, so that $f^{\yy}$ is always symmetric and is $\parallel$-symmetric if and only if is $\times$-symmetric.

We also denote by $\overline{f}^\parallel$ the function such that $\overline{f}^\parallel(x,y)=f(x^{-1} ,y^{-1})$, so that $f$ is $\parallel$-symmetric if and only if $f=\overline{f}^\parallel$.
\end{definition}
For instance, it is clear from \eqref{eq:tbul} and~\eqref{eq:tcir} that $t_\bullet$ and $t_\circ$ are $\times$-symmetric, when considered as functions of $D_\bullet$ and $D_\circ$.
\medskip

Based on these definitions, we get the following nice criterion for rationality in $z_\circ$ and $z_\bullet$, extending~\cite[Lemma~9]{ChMaSc09}.
\begin{lemma}
\label{lem:criterion}
Let $f$ be a symmetric function and write $F$ for the function such that $F(D_\circ,D_\bullet)=f(t_\circ,t_\bullet)$. Then $F$ is also symmetric. 

Moreover, the two following properties are equivalent: 
\begin{enumerate}[(i)]
  \item $f$ is a rational function.
  \item $F$ is a rational function and is $\parallel$-symmetric
\end{enumerate}
\end{lemma}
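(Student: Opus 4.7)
I would organize the argument around an explicit quadratic relation between $D_\bullet$ and $(t_\bullet,t_\circ)$ which makes $k(D_\bullet,D_\circ)/k(t_\bullet,t_\circ)$ a degree-two extension whose Galois involution is precisely the inversion involution $(D_\bullet,D_\circ)\mapsto(D_\circ^{-1},D_\bullet^{-1})$ detected by $\parallel$-symmetry.

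\textbf{Warm-up and (i) $\Rightarrow$ (ii).} Multiplying numerator and denominator in \eqref{eq:tbul}–\eqref{eq:tcir} by $D_\bullet D_\circ$ gives the cleaner form $t_\bullet=D_\bullet/Q$ and $t_\circ=D_\circ/Q$ with $Q:=D_\bullet D_\circ+2(D_\bullet+D_\circ)+1$. Setting $\phi(x,y):=x/(xy+2(x+y)+1)$, one has $t_\bullet=\phi(D_\bullet,D_\circ)$, $t_\circ=\phi(D_\circ,D_\bullet)$, and therefore
\[
F(D_\circ,D_\bullet)=f\bigl(\phi(D_\circ,D_\bullet),\phi(D_\bullet,D_\circ)\bigr).
\]
Swapping the two arguments of $F$ swaps the two arguments of $f$, so symmetry of $f$ transfers to $F$; this settles the unconditional first claim. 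Furthermore, the closed form shows $\phi(x^{-1},y^{-1})=\phi(y,x)$, i.e.\ $t_\bullet$ and $t_\circ$ are $\times$-symmetric; substituting $(D_\bullet,D_\circ)\mapsto(D_\circ^{-1},D_\bullet^{-1})$ into the displayed identity and using symmetry of $f$ then yields $F(D_\circ^{-1},D_\bullet^{-1})=F(D_\circ,D_\bullet)$, proving (i) $\Rightarrow$ (ii).

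\textbf{Implication (ii) $\Rightarrow$ (i) — the heart of the proof.} Substituting $Q=D_\bullet/t_\bullet$ into the defining relation $Q=D_\bullet D_\circ+2(D_\bullet+D_\circ)+1$ and simplifying yields
\[
t_\circ\,D_\bullet^{\,2}-\bigl(1-2(t_\bullet+t_\circ)\bigr)D_\bullet+t_\bullet=0.
\]
The key algebraic fact to establish is that this quadratic is the minimal polynomial of $D_\bullet$ over $k(t_\bullet,t_\circ)$. Once it is, Vieta's formulas give that the product of the two roots equals $t_\bullet/t_\circ$, and reading off $t_\bullet/t_\circ=D_\bullet/D_\circ$ from the explicit form of $\phi$ identifies the conjugate root as $1/D_\circ$. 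By symmetry the Galois involution $\sigma$ of the quadratic extension $k(D_\bullet,D_\circ)/k(t_\bullet,t_\circ)$ is then $(D_\bullet,D_\circ)\mapsto(D_\circ^{-1},D_\bullet^{-1})$, with fixed field $k(t_\bullet,t_\circ)$. Under hypothesis (ii), combining the symmetry of $F$ proved above with the assumed $\parallel$-symmetry yields
\[
F(D_\circ,D_\bullet)=F(D_\circ^{-1},D_\bullet^{-1})=F(D_\bullet^{-1},D_\circ^{-1}),
\]
which is exactly $\sigma$-invariance. Therefore $F\in k(t_\bullet,t_\circ)$, and unwinding the defining relation of $F$ gives rationality of $f$.

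\textbf{Main obstacle.} The only non-routine step is irreducibility of the quadratic, equivalently that its discriminant $(1-2t_\bullet-2t_\circ)^2-4t_\bullet t_\circ$ — the very quantity appearing in the denominator of \cref{thm:bivExprFormula} — is not a square in $k(t_\bullet,t_\circ)$. A valuation argument at the origin should suffice: the two roots of the quadratic are $D_\bullet\sim t_\bullet$ and $1/D_\circ\sim 1/t_\circ$, whose orders as formal series are incompatible, so they cannot both lie in $k(t_\bullet,t_\circ)$. Some care may be needed to pin down the ambient ring — our series are ultimately formal power series in the edge-counting variables — but the conclusion is insensitive to this choice since the whole argument takes place in the field generated by $t_\bullet$ and $t_\circ$ over the constants.
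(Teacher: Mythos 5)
Your argument is correct in its overall structure, but it takes a genuinely different route from the paper. The paper proves $(ii)\Rightarrow(i)$ by hand: it observes that a Laurent polynomial in $D_\bullet,D_\circ$ that is simultaneously symmetric, $\parallel$-symmetric and $\times$-symmetric lives in the span of $(D_{i,j})_{0\le i\le j}$, and then proves by induction on $i+j$ that each $D_{i,j}$ is a polynomial in $t_\bullet^{-1},t_\circ^{-1}$; it finally clears denominators of $F$ to reduce to this Laurent-polynomial case. You instead package the same information Galois-theoretically: $k(D_\bullet,D_\circ)/k(t_\bullet,t_\circ)$ is a degree-two extension whose nontrivial automorphism is $(D_\bullet,D_\circ)\mapsto(D_\circ^{-1},D_\bullet^{-1})$, so symmetry plus $\parallel$-symmetry of a rational $F$ is precisely $\sigma$-invariance, forcing $F$ into the fixed field $k(t_\bullet,t_\circ)$. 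This is shorter and conceptually cleaner, and it correctly identifies the quadratic $t_\circ X^2-(1-2t_\bullet-2t_\circ)X+t_\bullet=0$, its second root $1/D_\circ$ via Vieta and $t_\bullet/t_\circ=D_\bullet/D_\circ$, and the extension of $\sigma$ to $D_\circ$ via $D_\circ=(t_\circ/t_\bullet)D_\bullet$. The paper buys explicitness (it essentially constructs the preimage polynomials); you buy brevity and a structural explanation for why exactly three symmetries are involved.

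The one place your sketch does not quite close is the irreducibility of the quadratic. The claim that the two roots $D_\bullet\sim t_\bullet$ and $1/D_\circ\sim 1/t_\circ$ ``have incompatible orders, so they cannot both lie in $k(t_\bullet,t_\circ)$'' is not a valid inference: $k(t_\bullet,t_\circ)$ contains elements of arbitrary positive or negative valuation (for instance $1/t_\circ$ itself), so having one root of positive and one of negative valuation poses no obstruction on its own. Two clean replacements are available. One is to observe that $\iota:(D_\bullet,D_\circ)\mapsto(D_\circ^{-1},D_\bullet^{-1})$ is a nontrivial automorphism of $k(D_\bullet,D_\circ)$ fixing $t_\bullet$ and $t_\circ$ (you already verified $\iota(t_\bullet)=t_\bullet$, $\iota(t_\circ)=t_\circ$ from $\phi(x^{-1},y^{-1})=\phi(y,x)$), so $k(t_\bullet,t_\circ)$ is a proper subfield and the extension has degree exactly $2$; this removes the irreducibility question entirely. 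The other is to specialize $t_\bullet=t_\circ=t$ and note that the discriminant becomes $(1-4t)^2-4t^2=(1-2t)(1-6t)$, which is square-free and hence not a square in $\mathbb{Q}(t)$; a fortiori the bivariate discriminant is not a square in $k(t_\bullet,t_\circ)$. With either fix your proof is complete and is a legitimate alternative to the one in the paper.
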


\begin{proof}
We start by proving that $F$ is symmetric. The definition of $D_\circ$ and $D_\bullet$ immediately implies that $D_\circ(t_\circ,t_\bullet)=D_\bullet(t_\bullet,t_\circ)$. We then have: 
\[
  F(D_\circ,D_\bullet)=f(t_{\circ},t_{\bullet})= f(t_{\bullet},t_{\circ})=F(D_\bullet,D_\circ),
\]
hence $F$ is symmetric. The implication $(i)\implies (ii)$ is clear from the expression of $t_\circ$ and $t_\bullet$ in terms of $D_\circ$ and $D_\bullet$. 
\smallskip

We now turn our attention to the implication $(ii)\implies (i)$. Let $P$ and $Q$ be two bivariate polynomials such that $F=P/Q$. Since $F$ is symmetric and $\parallel$-symmetric, we can write:
\begin{align}
  F(D_\circ,D_\bullet)&=\frac{1}{4}\left(\frac{P(D_{\circ},D_\bullet)}{Q(D_{\circ},D_\bullet)} + \frac{P(1/D_{\circ},1/D_\bullet)}{Q(1/D_{\circ},1/D_\bullet)} +\frac{P(1/D_{\bullet},1/D_\circ)}{Q(1/D_{\bullet},1/D_\circ)}+\frac{P(D_{\bullet},D_\circ)}{Q(D_{\bullet},D_\circ)}\right)\\
  &=\frac{1}{4}\frac{\tilde P(D_{\circ},D_{\bullet},1/D_\circ,1/D_\bullet)}{Q(D_{\circ},D_\bullet)Q(D_{\bullet},D_\circ)Q(1/D_{\circ},1/D_\bullet)Q(1/D_{\bullet},1/D_\circ)}, 
\end{align}
where $\tilde P$ is a polynomial. Hence $F$ can be written as $\tilde P/\tilde Q$, where $\tilde P$ and $\tilde Q$ are two polynomials in $D_\circ$, $D_\bullet$, $D_\circ^{-1}$ and $D_\bullet^{-1}$, which are both $\parallel$-symmetric and $\times$-symmetric. 
The set of such polynomials 
admits $(D_{i,j})$ as a base, where we set:
\begin{equation}
D_{i,j} := D_\circ^iD_\bullet^j+ D_\bullet^iD_\circ^j+D_\circ^{-j}D_\bullet^{-i}+ D_\bullet^{-j}D_\circ^{-i}, \quad \text{for}\quad i\in \mathbb{Z}_{\geq 0} \text{ and }j\in \{-i,\ldots,i\}.
\end{equation}

To conclude the proof, it is enough to establish that $D_{i,j}$ can be written as a rational function of $t_\bullet,t_\circ$, for any $i$ and $j$ as above. To do so, we proceed by induction on $i$. 
We first check by direct computation that $D_{i,j}$ satisfies the desired property for $0\leq i\leq 2$ and $-i\leq j \leq i$. For instance, we have: 
\begin{equation}
D_{1,0}= (1-2t_\circ-2t_\bullet)\Big(\frac{1}{t_\circ}+\frac{1}{t_\bullet}\Big)\qquad \text{ and }\qquad
D_{1,1} = 2\frac{4t_\circ^2+6 t_\circ t_\bullet+4 t_\bullet^2-4 t_\circ-4 t_\bullet+1}{t_\circ t_\bullet}.
\end{equation}
Next, fix $i\geq 3$ and $0\leq j\leq i$ and observe that: 
\begin{align}
    D_{i,j} &= \Big(D_\circ D_\bullet+\frac{1}{D_\circ D_\bullet}\Big)D_{i-1,j-1}-D_{i-2,j-2},\\
    &= \frac12 D_{1,1}D_{i-1,j-1}-D_{i-2,j-2}.
\end{align}
If $j\geq 2$, we can directly apply the induction hypothesis to conclude that $D_{i,j}$ can be written as a rational function of $t_\circ$ and $t_\bullet$. For $0\leq j<2$, either $|i-2|\geq|j-2|$ and we can proceed by induction, or $|i-2|<|j-2|$, meaning that $i=3$ and $j=0$ so that $D_{i-2,j-2}=D_{1,-2}=D_{2,1}$, which corresponds to one of the base cases. 

Now for negative values of $j$, i.e. when $-i\leq j<0$, we can similarly write :
\[
    D_{i,j} = \Big(\frac{D_\circ}{D_\bullet}+\frac{D_\bullet}{D_\circ}\Big)D_{i-1,j+1}-D_{i-2,j+2},
\]
and conclude in the same way. 
\end{proof}

\section{Rationality of the generating series of cores and schemes}\label{sec:genSeries}
The main point of this section is to introduce enough material to state Theorem~\ref{thm:MirrorR}, which is the main result of this article and establishes a refined version of the rationality scheme given in Theorem~\ref{thm:bivExprRat}. To state this result, we first group together blossoming cores that share the same ``labeled scheme''; it allows us to give an identity between their generating series and the generating series of typed Motzkin walks in Lemma~\ref{lem:decompCore}. Then, we go one step further and group together blossoming cores that admit the same ``unlabeled scheme'', which will lead to the refined rationality scheme. 

\subsection{Decomposition of cores according to their labeled scheme.}\label{sec:brancheMotz}
In this section, we prove some identities between the generating series of blossoming cores with a fixed ``labeled scheme'' and the generating series of typed Motzkin walks.

\begin{definition}
A rooted well-oriented blossoming scheme, equipped with a labeling of its corners, is called a \emph{labeled scheme} if:
\begin{itemize}
 \item its root corner is labeled 0,
 \item and the labels of adjacent corners incident to the same vertex differ by $1$, with the corner of higher label on the left of the separating half-edge.
 \end{itemize} 
The set of labeled schemes is denoted by $\mathcal{L}$.
\end{definition}
Note that a labeled scheme is not necessarily well-labeled because labels of corners adjacent along an edge may differ, see for instance Figure~\ref{fig:decorated}.

For $\r \in \mathcal{R}$, recall from Section~\ref{subsub:blossoming} that $\Theta(\r)$ denotes the scheme of $\r$. Additionally, if $\r$ is endowed with a labeling of its corners, we let corners of $\Theta(\r)$ inherit the labeling of their corresponding corners in $\r$. So that, by definition if $\r$ is well-labeled, then $\Theta(\r)\in \mathcal{L}$. For $\l \in \cL$, we set by 
\begin{equation}\label{eq:defRl}
\mathcal{R}_\l:=\{\r \in \mathcal{R} \quad\text{s.t.}\quad \Theta(\r)=\l\}.
\end{equation}

\begin{remark}
In Theorem~\ref{thm:shortcut}, we grouped together ``good maps'' with the same unrooted core, thus obtaining a connection between the two generating series $O^\times_{\bar \r}$ and $R_{\bar \r}$, for any $\r\in \cR$. To apply the criterion given in \cref{lem:criterion}, we go in a slightly different direction and now group together elements of $\mathcal{R}$ which admit the same labeled scheme. For any $\l \in \mathcal{L}$, it allows us to obtain in Lemma~\ref{lem:decompCore} a relatively explicit formula -- as a function of $D_\circ$, $D_\bullet$ and $B$ -- for the generating function $R_{\l}$. 
\end{remark}

Fix $\l \in \cL$. To construct an element of $\mathcal{R}_\l$ from $\l$, we need to replace each edge of $\l$ by a sequence of vertices of inner degree 2, in such a way that the labels around the scheme vertices after substitution coincide with the labels in the corners of $\l$. 
\begin{figure}[t]
    \centering
    \includegraphics[page=9]{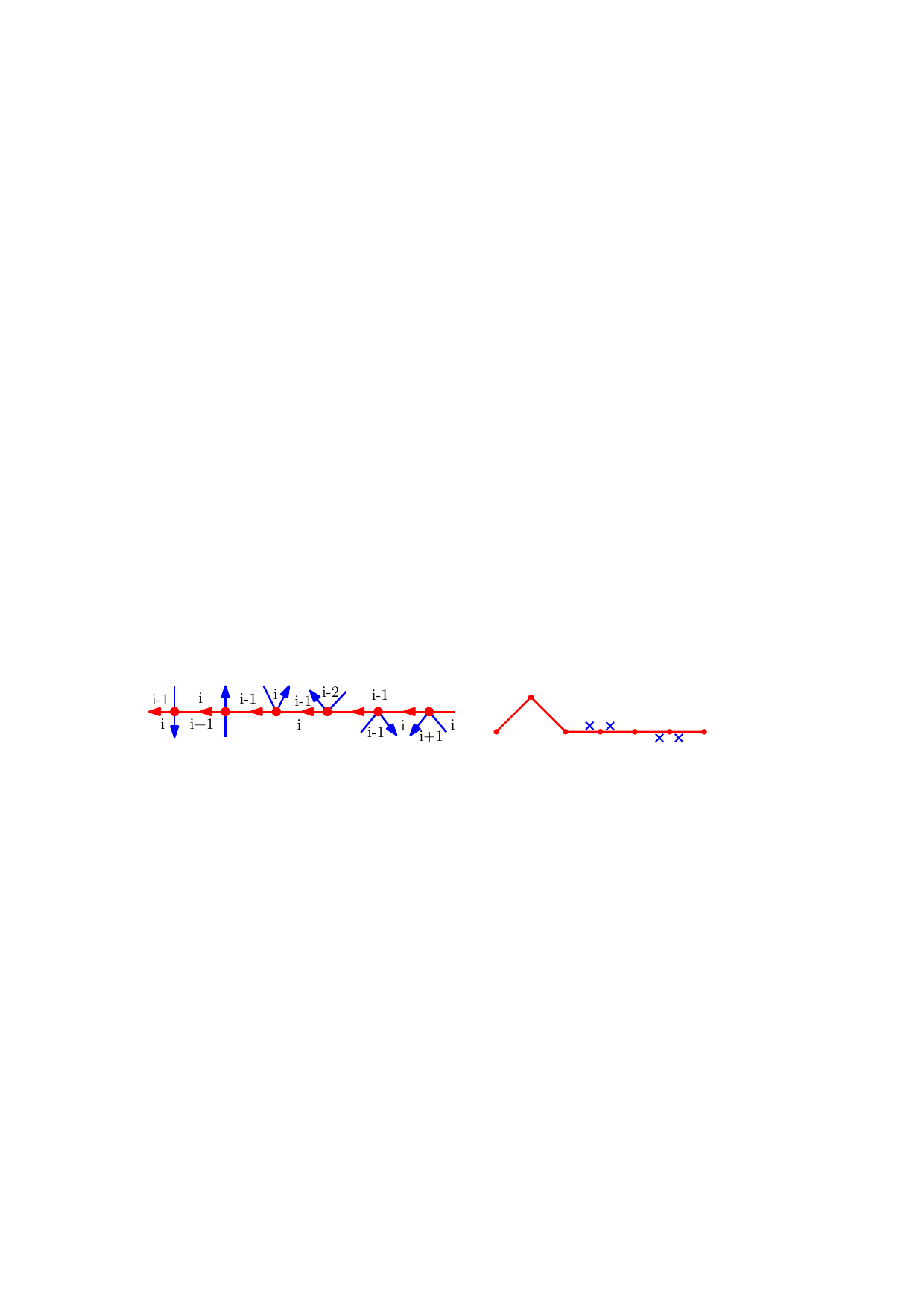}
    \caption{Illustration of the 6 types of branch vertices (top), the distinction between $i$ even and odd enables us to indicate the color of the leaves on the branches.\\ On the bottom, we represent the corresponding steps in the typed Motzkin paths where the heights of the vertices are indicated. The heights always correspond to the labels situated on the right of the oriented edges in the map.}
    \label{fig:corrVertStep}
\end{figure}

In an element of $\cR$, the vertices of interior degree $2$ can be of 6 different types, as illustrated in \cref{fig:corrVertStep} (top). When following a branch in the direction given by its orientation, the first type of vertex implies an increase of $1$ in the labels, the second one a decrease of $1$, and the last $4$ do not change the value of the labels along the branch. Each of the latter ones is given a type in $\{a,b,c,d\}$, as described in~\cref{fig:corrVertStep}. Define the set of \emph{typed Motzkin paths} to be the set of Motzkin walks such that each of their horizontal steps is decorated by a type in $\{a,b,c,d\}$. 

For $e=\{u,v\}\in E(\l)$ oriented from $u$ to $v$ (in the canonical orientation of $\l$), we denote respectively by $\lamZ(e)$ and $\lamU(e)$
the labels of the corners on the right of $u$ and respectively incident to $u$ and to $v$, see Figure~\ref{fig:decorated}. Hence, for $\mathcal{R}_\l$, a branch with $k$ edges in $\r$ corresponding to the edge $e$ of $\l$ can be encoded by a typed Motzkin walk of length $k-1$ starting at height $\lamZ(e)$ and ending at height $\lamU(e)$. Hence, we have:
\begin{claim}\label{cla:appPhi}
Define a \emph{decorated labeled scheme} $\l^\star$ as a couple $\Big(\l,(\w_e)_{e\in E(\l)}\Big)$, such that $\l\in \mathcal{L}$ and for each $e\in E(\l)$, $\w_e$ is a typed Motzkin path, which goes from height $\lamZ(e)$ to height $\lamU(e)$.

Then the application $\phi$ that maps $\l^\star$ to the blossoming core obtained by replacing each edge $e\in E(\l)$ by the branch encoded by $\w_e$ is a bijection between decorated labeled schemes with $\l$ as labeled scheme and $\mathcal{R}_\l$.
\end{claim}

For an edge $e\in E(\l)$, $e$ is said to be \emph{increasing} if $\lamZ(e)\leq \lamU(e)$ and decreasing otherwise.The sets of increasing and decreasing edges of $\l$ are respectively denoted by $\Eup(\l)$  and $\Edo(\l)$.

\begin{lemma}\label{lem:decompCore}
For any $\l\in \cL$, recall that $\bar S(\l)$ denotes the set of its rootable stems, then we have the following equality:
\begin{equation}
  R_\l(t_\bullet, t_\circ)=
  \prod_{\sigma\in \bar S(\l)}\Delta_{\lambda(\sigma)}^{\lambda(\sigma)+1}(t_\bullet,t_\circ)\cdot
  \prod_{e\in \Eup(\l)}B\cdot\Delta_{\lamZ(e)}^{\lamU(e)}(D_\bullet,D_\circ)\cdot
  \prod_{e\in \Edo(\l)}B\cdot\Delta_{\lamU(e)}^{\lamZ(e)}(D_\circ,D_\bullet),
\end{equation}
where for $i\leq j\in \mathbb{Z}_{\geq 0}$, we set:
\begin{equation}\label{eq:defDeltaPairImpair}
\Delta_i^j(x,y)\coloneqq x^{|[i,j[\cap (2\mathbb{Z})|} y^{|[i,j[\cap (2\mathbb{Z}+1)|} \quad\text{ for }i<j \qquad \text{ and }\qquad \Delta_i^i(x,y)=1.
\end{equation}

\end{lemma}

\begin{proof} The idea of the proof is completely standard and amounts to adding up the contributions of the rootable stems incident to $\l$ and of the ones coming from the substitution of edges by branches. However, some special care has to be taken in order to track the colors of stems or equivalently the parity of their labels. 

Let $\l^\star = \Big(\l,(\w_e)\Big)$ be a decorated labeled scheme. The rootable stems of $\r:=\phi(\l^\star)$ are either rootable stems of $\l$ or leaves created when replacing the edge $e$ of $\l$ by the branch encoded by $\w_e$. It is clear from Figure~\ref{fig:corrVertStep} that for any $e\in E(\l)$: 
\begin{itemize}
  \item each odd step of $\w_e$ creates one white leaf in $\r$,
  \item each even step of $\w_e$ creates one black leaf in $\r$,
  \item each horizontal step of $\w_e$ of type $(a)$ or $(b)$ creates one white leaf if $i$ is odd (resp. one black leaf if $i$ is even) in $\r$
  \item each horizontal step of $\w_e$ of type $(c)$ or $(d)$ creates one black leaf if $i$ is odd (resp. one white leaf if $i$ is even) in $\r$,
\end{itemize}
Recall the definition of the generating series $W^{i\rightarrow j}$ given in~\eqref{eq:defWMotzkin}, we can write: 
\begin{equation}
  R_\l(t_\bullet, t_\circ)=\prod_{\sigma\in \bar S(\l)}\Delta_{\lambda(\sigma)}^{\lambda(\sigma)+1}(t_\bullet,t_\circ)\cdot
   \prod_{e\in E(\l)} W^{\lamZ(e)\rightarrow \lamU(e)}(t_\bullet,t_\circ).
\end{equation}
Fix now $e\in\Edo(\l)$, then a typed Motzkin path from  $\lamZ:=\lamZ(e)$ to $\lamU:=\lamU(e)$ can be decomposed into a $(\lamZ-\lamU)$-tuple of primitive typed Motzkin paths followed by a typed Motzkin bridge. It implies that for $e\in \Edo(\l)$: 
\[
  W^{\lamZ\rightarrow \lamU}(t_\bullet,t_\circ) = B(t_\bullet,t_\circ)\prod_{k=\lamU+1}^{\lamZ} W_{[\geq k]}^{k \rightarrow (k-1)}(t_\bullet,t_\circ), 
\]
Hence, for $e\in \Edo(\l)$: 
\[
  W^{\lamZ\rightarrow \lamU}(t_\bullet,t_\circ) = B(t_\bullet,t_\circ)\cdot \Delta_{\lamU}^{\lamZ}(D_\circ,D_\bullet).
  \]
A similar decomposition when $e$ belongs to $\Eup(\l)$ concludes the proof of the lemma.
\end{proof}
\begin{figure}[t]
\centering
\includegraphics[width=1\linewidth,page=4]{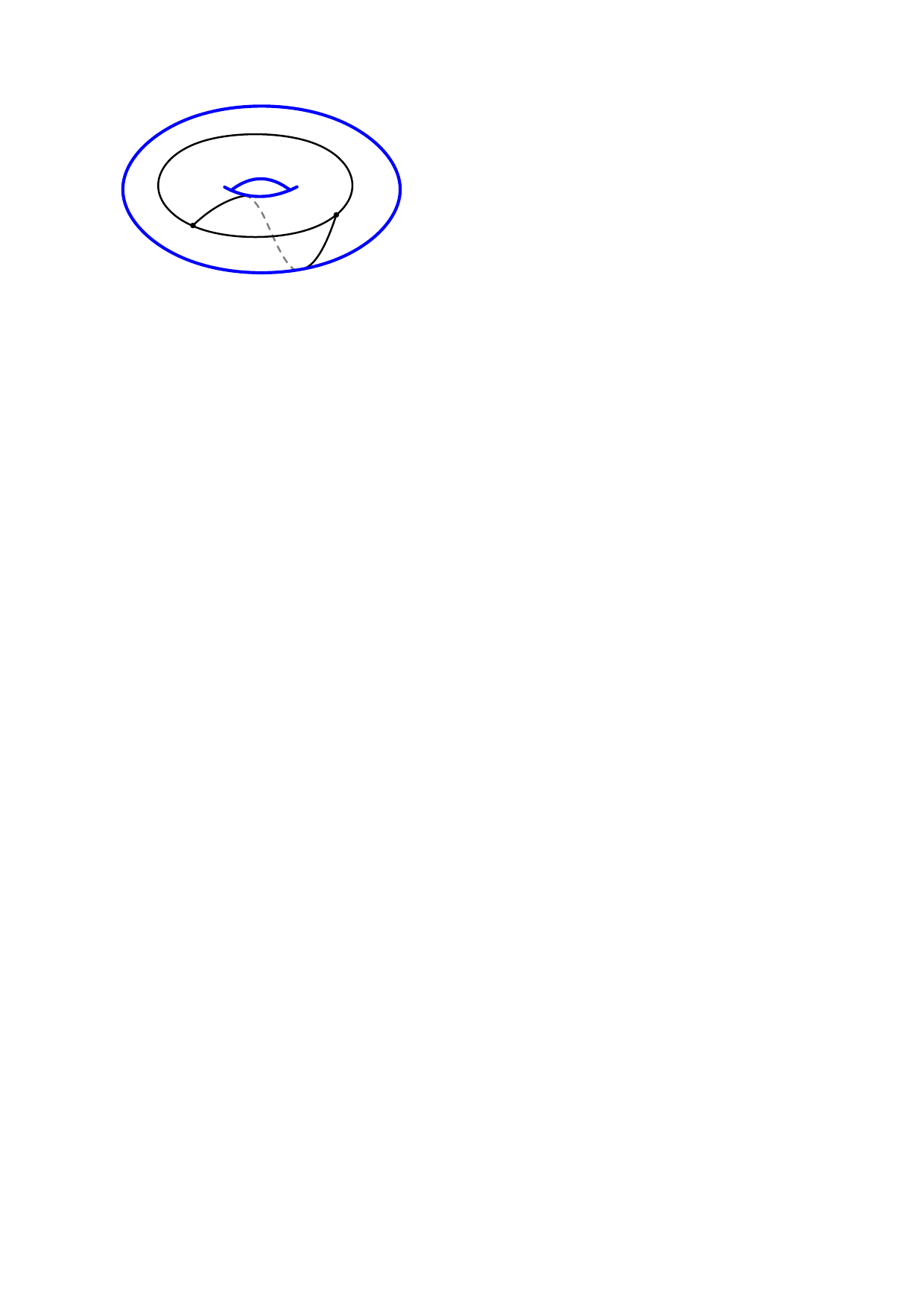}
\caption{\label{fig:decorated}The correspondence between a decorated labeled scheme $\l^\star$ (left) and a blossoming core (right).\\ In this example, $\lamZ_\l(e_1)=2$ and $\lamU_\l(e_1)=0$. Moreover, $\Eup(\l)=\{e_2\}$ and $\Edo(\l)=\{e_1,e_3\}$.}
\end{figure}
\begin{corollary}\label{cor:RratD}
For any $\l\in \cL$, $R_\l$ is a rational function of $D_\bullet$ and $D_\circ$.
\end{corollary}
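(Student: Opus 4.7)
The proof will be essentially an immediate consequence of Lemma~\ref{lem:decompCore} once we verify that each ingredient on its right-hand side is rational in $D_\bullet$ and $D_\circ$. So the plan is a small bookkeeping argument.

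First I will analyze each factor appearing in Lemma~\ref{lem:decompCore}. The factors $\Delta_{i}^{j}(D_\bullet,D_\circ)$ and $\Delta_{i}^{j}(D_\circ,D_\bullet)$ are, by the very definition \eqref{eq:defDelta}, monomials in $D_\bullet$ and $D_\circ$, hence trivially rational in those variables. The remaining ingredients are the factors $\Delta_{\lambda_\l(\sigma)}^{\lambda_\l(\sigma)+1}(t_\bullet,t_\circ)$, which are monomials in $t_\bullet,t_\circ$ of degree~$1$, and the bridge generating function $B(t_\bullet,t_\circ)$. So it suffices to show that $t_\bullet$, $t_\circ$ and $B$ are all rational in $D_\bullet,D_\circ$.

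For $t_\bullet$ and $t_\circ$ this is exactly the content of formulas \eqref{eq:tbul} and \eqref{eq:tcir}, which I already have: they express $t_\bullet$ and $t_\circ$ as explicit rational functions of $D_\bullet$ and $D_\circ$. For $B$, I will use the third equation of Property~\ref{prop:eqDt}, namely
\begin{equation*}
B = 1 + 2(t_\bullet+t_\circ)B + (t_\bullet D_\circ + t_\circ D_\bullet)B,
\end{equation*}
which solves as
\begin{equation*}
B = \frac{1}{1 - 2(t_\bullet+t_\circ) - (t_\bullet D_\circ + t_\circ D_\bullet)}.
\end{equation*}
Substituting the expressions \eqref{eq:tbul}--\eqref{eq:tcir} for $t_\bullet$ and $t_\circ$ exhibits $B$ as a rational function of $D_\bullet$ and $D_\circ$.

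Combining these observations, every factor in the product formula of Lemma~\ref{lem:decompCore} is rational in $D_\bullet,D_\circ$, hence so is $R_\l(t_\bullet,t_\circ)$. There is no genuine obstacle here; the only minor care needed is to confirm that the denominator $1 - 2(t_\bullet+t_\circ) - (t_\bullet D_\circ + t_\circ D_\bullet)$ is a nonzero element of the field of rational functions in $D_\bullet,D_\circ$ (which is immediate since $t_\bullet,t_\circ,D_\bullet,D_\circ$ have no constant term, so the denominator has constant term $1$).
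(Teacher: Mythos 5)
Your proposal is correct and follows essentially the same approach as the paper: you invoke Lemma~\ref{lem:decompCore}, observe that the $\Delta$ factors are monomials, and use \eqref{eq:tbul}, \eqref{eq:tcir} and Property~\ref{prop:eqDt} to express $t_\bullet$, $t_\circ$, $B$ as rational functions of $D_\bullet$, $D_\circ$. The paper simply goes one step further and records the simplified closed form $B = \frac{1+2(D_\bullet+D_\circ)+D_\bullet D_\circ}{1-D_\bullet D_\circ}$ (their equation \eqref{eq:BD}), which is reused later; you stop at the intermediate expression $B = \bigl(1-2(t_\bullet+t_\circ)-(t_\bullet D_\circ+t_\circ D_\bullet)\bigr)^{-1}$, but that is entirely sufficient for the corollary at hand.
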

\begin{proof}
Thanks to Lemma~\ref{lem:decompCore} and the expressions for $t_\bullet$ and $t_\circ$ given in~\eqref{eq:tbul} and \eqref{eq:tcir}, it is easy to see that $R_\l$ is a rational function of $B$, $D_\circ$, and $D_\bullet$. Moreover, a little of algebra based on Property~\ref{prop:eqDt} and Equations \eqref{eq:tbul} and \eqref{eq:tcir} gives the following expression for $B$:
\begin{equation}\label{eq:BD}
  B = \frac{1+2(D_\bullet+D_\circ) + D_\bullet D_\circ}{1-  D_\bullet D_\circ},
\end{equation}
so that $R_\l$ is also rational as a function of $D_\bullet$ and $D_\circ$ only.
\end{proof}

Since for any genus $g\geq 1$, the number of labeled schemes of genus $g$ is infinite, we explain in the next section how to further regrouping elements of $\mathcal{R}$, so as to be able to apply criterion of~\cref{lem:criterion}.

\subsection{Unlabeled schemes, consistent namings and height ordering}\label{sec:prel}
Let $\l\in\mathcal{L}$ be a labeled scheme, then its \emph{unlabeled scheme} -- denoted $\us(\l)$
 -- is (unsurprisingly) defined as the scheme obtained from $\l$ after erasing its labels. The set of unlabeled schemes is denoted $\mathcal{S}$.

\begin{figure}[t]
\centering
    \subcaptionbox{A well-oriented labeled scheme,\label{subfig:schemegenus2}}    {\includegraphics[page=1,scale=.77]{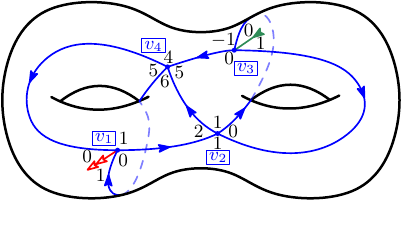}}
    \qquad
    \subcaptionbox{with the height of its vertices and its relative labels. \label{subfig:relative}}
    {\includegraphics[page=2,scale=.77]{images/Scheme_genus2_bis.pdf}}\qquad
      \subcaptionbox{The corresponding unlabeled scheme.\label{subfig:unlabeled}}
    {\includegraphics[page=3,scale=.77]{images/Scheme_genus2_bis.pdf}}
\caption{\label{fig:unlabScheme}Example of a well-oriented labeled scheme $\l$ of genus $2$ with $4$ vertices. In~\subref{subfig:schemegenus2}, $\l$ is represented classically on a surface of genus $2$ with the labeling of its corners and its orientation.
In~\subref{subfig:relative} and~\subref{subfig:unlabeled}, the same map is represented but the underlying surface is omitted.\\ In~\subref{subfig:relative} and~\subref{subfig:unlabeled}, corners are labeled by their relative labels (which could also be deduced from the canonical orientation of the edges). In~\subref{subfig:relative}, the height of each vertex is indicated by the circled red label. In~\subref{subfig:unlabeled}, the corresponding unlabeled scheme is represented.}
\end{figure}

\subsubsection{Relative labels and height}
Let $\l\in \mathcal{L}$. For $v\in V(\l)$, the \emph{height} $h(v)$
 of $v$ is defined as the minimum of the labels of its adjacent corners, i.e.:
\[
h(v) = \min\{\lambda(\kappa),\text{ for }\kappa\text{ a corner incident to }v\}.
\]
For $\kappa \in C(\l)$, the \emph{relative label} of $\kappa$ is set to be the difference $\lambda(\kappa)-h(\v(\kappa))$, where $\v(\kappa)$ denotes the vertex incident to $\kappa$. Observe that relative labels can be immediately retrieved from the orientation of the edges of $\l$ or of $\us(\l)$. 
Therefore, $\l$ can be reconstructed from $\us(\l)$ and $(h(v))_{v\in V(\l)}$, see Figure~\ref{fig:unlabScheme} and it makes sense to speak of the relative labels of an unlabeled scheme. 

\medskip 
The following fact will be useful in the sequel:
\begin{claim}\label{cla:height0}
For $\l\in \mathcal{L}$, the height of its root vertex is equal to $0$. 
\end{claim}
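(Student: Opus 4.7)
Since $\lambda_\l(\rho) = 0$ is built into the definition of a labeled scheme, we immediately get $h_\l(v_\rho) \leq 0$; the content of the claim is therefore that every label at $v_\rho$ is in fact non-negative.

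My plan is to exploit the 4-valent structure, to which we have specialised for the rest of the paper. Every vertex of $\l$ has total degree $4$ and interior degree at least $3$, so each vertex carries at most one stem. Since $v_\rho$ is incident to the root bud, its unique stem is the root bud itself, and the three remaining half-edges around $v_\rho$ are proper. Writing $c_0 = \rho, c_1, c_2, c_3$ for the corners of $v_\rho$ in counterclockwise order and $e_i$ for the half-edge between $c_i$ and $c_{i+1 \bmod 4}$, this means that $e_0$ is the outgoing root bud and $e_1, e_2, e_3$ are all proper. The rule ``higher label on the left'' applied across the outgoing $e_0$ then immediately gives $\lambda_\l(c_1) = 1$.

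For $\lambda_\l(c_3)$, I would examine the clockwise contour of $\l$ viewed as a unicellular blossoming map: its very last transition returns to $c_0$ by crossing $e_3$. Since the proper edge $e$ carrying $e_3$ is traversed exactly twice by the contour and $c_0$ appears only at the beginning and the end of the contour, this cyclic return must be the second of the two traversals of $e$; consequently the first traversal of $e$ leaves $v_\rho$ (at $c_3$). The well-orientation rule ``orient each proper edge backwards the first time it is followed'' then orients $e$ into $v_\rho$, so $e_3$ is incoming, and the labeling rule yields $\lambda_\l(c_3) = 1$. Finally, with $\lambda_\l(c_1) = \lambda_\l(c_3) = 1$, the $\pm 1$ rule between consecutive corners squeezes $\lambda_\l(c_2)$ into $\{0,2\}$, so all four labels at $v_\rho$ lie in $\{0,1,2\}$ and thus $h_\l(v_\rho) \geq 0$. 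The step I expect to need the most care is the orientation of $e_3$: one has to verify cleanly that the cyclic closure at $c_0$ is genuinely the second traversal of the proper edge carrying $e_3$, which is where the combination of the rooting convention, 4-valence, and well-orientation actually enters the argument.
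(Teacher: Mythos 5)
Your proof is correct and takes essentially the same route as the paper's: the heart of the matter in both is that well-orientation forces the proper edge adjacent to $c_0$ (the one preceding the root bud) to be oriented into the root vertex, which pins $\lambda(c_3)=1$; you then use the root bud directly to get $\lambda(c_1)=1$, whereas the paper notes (equivalently) that the edge following the bud is also incoming. Your extra care about the last transition of the contour being the \emph{second} traversal of the edge carrying $e_3$ is exactly the justification the paper leaves implicit behind its reference to the figure.
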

\begin{proof}
The only thing to prove is that the relative labels of the corners incident to the root vertex of $\l$ are all in $\{0,1\}$. Since $\l$ is well-oriented, the two edges following and preceding the root bud around its root vertex are necessarily oriented toward it, see Figure~\ref{fig:unlabScheme}\subref{subfig:schemegenus2}.
\end{proof}

\subsubsection{Offset edges and consistent naming}
In this section $\s$ is a fixed element of $\mathcal{S}$. Most definitions of this section are illustrated in Figure~\ref{fig:naming}. A half-edge of $\s$ is said to be \emph{shifted}, if its adjacent relative labels are $1$ and $2$, and \emph{unshifted} otherwise (in which case its adjacent relative labels are $0$ and $1$).

\begin{definition}[balanced, shifted, and offset edges and stems]\label{def:balancedShiftedOffset}
An edge $\{u,v\}$ of $\s$ is said to be:
\begin{compactitem}
\item \emph{unshifted} if it consists of two unshifted half-edges,
\item \emph{shifted} if it consists of two shifted half-edges,
\item \emph{offset towards $v$} if it has exactly one shifted half-edge that is incident to $v$. 
\end{compactitem}
\end{definition}

In~\cite{Lep19}, the second author obtained the following result about offset edges:
\begin{theorem}[Theorem 4.14 of \cite{Lep19}]
\label{thm:acyclicOffsetGraph}
For any $\s\in \mathcal{S}$, the directed graph of offset edges of $\s$ is acyclic.
\end{theorem}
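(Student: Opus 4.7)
The plan is to argue by contradiction: assume that the directed offset graph of $\s$ contains a directed cycle $u_0 \to u_1 \to \cdots \to u_k = u_0$. A preliminary observation, handled quickly, is that each $u_i$ on the cycle must be of ``triangle'' type (relative labels $\{0,1,2,1\}$): a vertex on the cycle accommodates both an outgoing type-$0$ half-edge and an incoming type-$1$ half-edge, which ``square'' vertices (relative labels $\{0,1,0,1\}$) do not possess.

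The heart of the argument leverages the fact that $\s = \us(\Theta(\c))$ for some well-labeled core $\c \in \mathcal{R}$. The well-labeled condition translates, for every core edge $e = \{a,b\}$, into the identity $h_{\c}(b) - h_{\c}(a) = t^L(e) - t^R(e)$, where $t^L, t^R \in \{0,1\}$ are the types of the half-edges of $e$ at $a$ and $b$ respectively. In particular, $h_{\c}$ strictly decreases along core-level offset edges. Each scheme edge $u_i \to u_{i+1}$ corresponds to a branch of $\c$; telescoping the per-edge identity along the branch gives
\[
h_{\c}(u_{i+1}) - h_{\c}(u_i) \;=\; -1 \;+\; \sum_{w\text{ internal}} \bigl(t^L_{\mathrm{next}}(w) - t^R_{\mathrm{prev}}(w)\bigr),
\]
where the $-1$ encodes the offset nature of the scheme edge (type $0$ at $u_i$, type $1$ at $u_{i+1}$) and the sum records type transitions at internal triangle degree-$2$ vertices of the branch. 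Summing around the cycle and using $\sum_{i}(h_{\c}(u_{i+1}) - h_{\c}(u_i)) = 0$ yields $\Sigma = k$, where $\Sigma$ is the aggregate of all internal triangle-vertex transitions along the branches of $C$.

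The contradiction must come from ruling out $\Sigma = k$. The cleanest route is to choose the realization $\c$ ``minimally'' so that no branch of $C$ contains an internal triangle degree-$2$ vertex: then $\Sigma = 0$ and $k = 0$, contradicting $k \geq 1$. The main obstacle will be justifying the existence of such a minimal $\c$, since collapsing an internal triangle degree-$2$ vertex of a well-labeled core can destroy the label-matching condition, so one cannot freely shorten branches. The honest argument must therefore either build a well-labeled realization adapted to $C$ from scratch, or proceed by induction on the total number of internal degree-$2$ vertices appearing on the branches of $C$, using the Motzkin-walk encoding of branches (Section~\ref{sec:brancheMotz}) together with the six-type classification of degree-$2$ vertices (Figure~\ref{fig:corrVertStep}) to reduce to a strictly smaller case while preserving the offset cycle.
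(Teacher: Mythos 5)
The paper does not prove this statement; it cites it as Theorem~4.14 of~\cite{Lep19} and uses it as a black box. So there is no in-paper proof to compare against, and your attempt must be judged on its own merits.

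Your telescoping identity is correct. For any well-labeled core $\c$ realizing $\s$, the matching of labels across each core edge gives $h_\c(b)-h_\c(a)=r_a-r_b$ (with $r_a,r_b$ the types of the half-edges at $a,b$), and telescoping along a branch from $u_i$ to $u_{i+1}$ whose scheme edge is offset towards $u_{i+1}$ yields
\begin{equation}
h_\c(u_{i+1})-h_\c(u_i)=-1+\Sigma_i,\qquad
\Sigma_i=\sum_{w\ \text{internal}}\bigl(r_w^{\text{next}}-r_w^{\text{prev}}\bigr).
\end{equation}
Summing over a hypothetical directed offset cycle of length $k$ then forces $\Sigma:=\sum_i\Sigma_i=k$, exactly as you say.

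But the proposed way of deriving a contradiction is self-defeating, and this is a genuine gap rather than a mere omission of detail. Your own identity shows $\Sigma=k$ for \emph{every} realization $\c$ of $\s$: the left side telescopes to $0$ around the cycle regardless of which $\c$ you pick, so the right side is always $k$. Therefore a realization with $\Sigma=0$ simply does not exist once $k\geq 1$; the ``minimal $\c$ with no internal triangle vertices on the cycle branches'' that you hope to construct is precisely the thing whose nonexistence you are trying to prove, and any attempt to ``build a well-labeled realization adapted to $C$ from scratch'' or to ``proceed by induction on the number of internal degree-$2$ vertices'' will run straight into this wall: you can never shrink $\Sigma$ below $k$. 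What is actually needed is an argument that bounds $\Sigma$ from above by something strictly less than $k$ using a property that is \emph{not} available to arbitrary realizations --- e.g.\ an argument exploiting the unicellularity of $\s$ (the single face constrains how type-$0\to 1$ transitions can be distributed around the contour), or a direct structural argument on the scheme $\s$ itself that does not pass through a realization at all. Your write-up does not supply such an argument, and explicitly flags the step as unresolved; so the statement is not proved. The preliminary observation that every cycle vertex must have corner profile $(0,1,2,1)$ is correct but is not used anywhere in what follows, so it does not help close the gap.
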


\begin{definition}\label{def:naming}
A \emph{naming} of $\s$ is a bijection from $V(\s)$ to $[|V(\s)|]$.\footnote{A naming is simply a labeling of the vertices of $\s$ by integers from 1 to $|V(\s)|$. However, we do not use the term ``label'' to avoid any confusion with \emph{labeled schemes}, and use the term ``naming'' instead.}
A naming is called \emph{consistent} if it is a linear extension of the partial order induced by the (acyclic) oriented graph of its offsets edges. In other words, a naming $\nu$ is consistent if, for any edge $\{u,v\}$ offset towards $v$, $\nu(u)<\nu(v)$. 
\end{definition}
Theorem~\ref{thm:acyclicOffsetGraph} implies that $\s$ admits a consistent naming. 
\medskip

\begin{center}
\begin{minipage}[b]{0.9\textwidth}
\textbf{\emph{In the following, we always assume that an unlabeled scheme is endowed with a fixed consistent naming $\nu$ even if it is not explicitely mentioned.}}
\end{minipage}
\end{center}

\begin{figure}[t]
\centering
    \subcaptionbox{Labeled scheme $\l$,\label{subfig:schemeNaming}}    {\includegraphics[page=1,scale=.9]{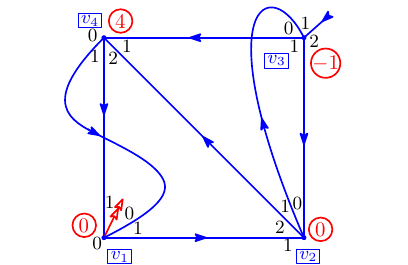}}
    \qquad\qquad\quad
    \subcaptionbox{and its unlabeled scheme $\s:=\us(\l)$.\label{subfig:schemeOffset}}
    {\includegraphics[page=2,scale=.9]{images/Offset.pdf}}
\caption{\label{fig:naming}
In~\subref{subfig:schemeNaming} this is the labeled scheme  $\l$ of Figure~\ref{fig:unlabScheme}\subref{subfig:relative}. In~\subref{subfig:schemeOffset}, this is the corresponding unlabeled scheme, where its offset edges are represented by dashed black edges with a triple arrow illustrating the direction of offset, its shifted edge is represented as a dot-dashed green edge with two triple arrows, and its shifted stem as a black half edge with a triple arrow.\\
A consistent naming $\nu$ of $\s$ is given by $\nu(v_i)=i$. Moreover, the height ordering $\pi_{\l}$ of $\l$ is equal to $\pi_{\l}(1)=v_3$, $\pi_{\l}(2)=v_1$, $\pi_{\l}(3)=v_2$ and $\pi_{\l}(4)=v_4$, since $h(v_3)< h(v_1)=h(v_2)<h(v_4)$ and $\nu(v_1)<\nu(v_2)$.
}
\end{figure}

\subsubsection{Height-order relative to a naming}
Fix $\l\in\mathcal{L}$.
, and let $\nu$ be a naming of $\us(\l)$. 
The \emph{height order} $\pi_{\l}$ of $\l$ is informally defined as the ordering of the vertices of $\l$ by their height, where two vertices of the same height are ordered thanks to the naming of $\nu$. 
More precisely, $\pi_{\l}$ is a bijection from $|V(\l)|]$ to $V(\l)$ such that for any $i<j$:
\begin{itemize} 
  \item Either $h(\pi_{\l}(i))<h(\pi_{\l}(j))$, 
  \item Or $h(\pi_{\l}(i))=h(\pi_{\l}(j))$ and $\nu(\pi_{\l}(i))<\nu(\pi_{\l}(j))$.
\end{itemize}
The inverse bijection $\pi_{\l}^{-1}:V(\l)\to [|V(\l)|]$ is called the \emph{height rank function} of $\l$. The height ordering and the height rank depend on the naming $\nu$, so that we should in fact write $\pi_{\l,\nu}$. To lighten the notation, we drop the index $\nu$ which should not cause any confusion. 
\smallskip

To enumerate labeled schemes which admit the same unlabeled scheme, we group them by height order. Write $\mathfrak{S}_\s$ for the set of bijections from $[|V(\s)|]$ to $V(\s)$. Fix $\pi \in \mathfrak{S}_\s$, then we set:
\begin{equation}
\mathcal{L}_{\s}^\pi\coloneqq\{\l\in\mathcal{L}_\s|\pi_{\l}=\pi\}
\end{equation} 
Finally, we denote by $\mathcal{R}_{\s}^\pi$ the corresponding subset of $\mathcal{R}$:
\begin{equation}\label{eq:defRSNuPi}
\mathcal{R}_{\s}^\pi\coloneqq\bigcup\limits_{\l\in\mathcal{L}_{\s}^\pi}\mathcal{R}_\l.
\end{equation}

\subsection{A key tool to prove $\parallel$-symmetry : the mirror operation}\label{sub:MainMirror}
For $\pi\in \mathfrak{S}_{\s}$, its \emph{mirror} $\overline{\pi}$ is defined by: 
 \begin{equation}\label{eq:defmirror}\overline{\pi}(k)=\pi(|V(\s)|+1-k), \quad \text{for }k\in [|V(\s)|].
 \end{equation}

The mirror operation allows us to state the following refined rationality scheme which is key in the combinatorial proof of Theorem~\ref{thm:bivExprRat}: 
\begin{theorem}\label{thm:MirrorR}
For any $\s\in \mathcal{S}$, any consistent naming $\nu$ of $\s$ and any $\pi\in \mathfrak{S}_{\s}$, we have: 
\begin{equation}
\overline{R_{\s}^\pi(D_\bullet,D_\circ)}^\parallel+\overline{R_{\s}^\pi(D_\circ,D_\bullet)}^\parallel=R_{\s}^{\overline{\pi}}(D_\bullet,D_\circ)+R_{\s}^{\overline{\pi}}(D_\circ,D_\bullet).
\end{equation}
Moreover, $R_{\s}^\pi(D_\bullet,D_\circ)$ is a rational function of $D_\bullet$ and $D_\circ$.

Hence, by Lemma~\ref{lem:criterion}, $R_{\s}(t_\bullet,t_\circ)+R_{\s}(t_\circ,t_\bullet)$ is a rational function of $t_\bullet$ and $t_\circ$.
\end{theorem}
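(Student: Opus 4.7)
My approach is to evaluate $R_{\s,\nu}^\pi(D_\bullet,D_\circ)$ explicitly by summing the product formula of \cref{lem:decompCore} over all $\l\in\mathcal{L}_{\s,\nu}^\pi$, then to deduce the mirror identity via a height-reversal on the resulting admissible sequences.

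First, I would observe that an element $\l\in\mathcal{L}_{\s,\nu}^\pi$ is uniquely specified by the height sequence $(h_i)_{i\in[n]}\coloneqq(h_\l(\pi(i)))_{i\in[n]}$, where $n=|V(\s)|$. Admissibility reads: $h_1\leq h_2\leq\cdots\leq h_n$ in $\mathbb{Z}_{\geq 0}$, with strict inequality at each $\nu$-descent of $\pi$ (i.e.\ at each $i$ with $\nu(\pi(i))>\nu(\pi(i+1))$), and $h_i=0$ for every $i\leq\pi^{-1}(\v(\rho))$, forced by \cref{cla:height0} together with the consistency of $\nu$ (which, combined with the partial order of offset edges that $\pi$ must extend, pins down the initial block). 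Using \cref{lem:decompCore}, the contribution of $\l$ is then an explicit product over the edges and rootable stems of $\s$ of monomials in $D_\bullet,D_\circ$ (and $t_\bullet,t_\circ$), each depending only on the half-edge types at the edge and the parities of the endpoint heights. The overall sum decouples as an iterated geometric sum along the chain $0\leq h_1\leq\cdots\leq h_n$, hence is rational in $D_\bullet,D_\circ$; combined with~\eqref{eq:BD}, this already delivers the second assertion of the theorem.

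To match $\overline{R_{\s,\nu}^\pi}^\parallel$ with $R_{\s,\nu}^{\overline\pi}$, I would consider the reversal $(h_i)\mapsto(H-h_{n+1-i})$ for an auxiliary integer $H$. Under this map: (a) the reversed sequence is weakly increasing, and its strict-inequality positions match the $\nu$-descents of $\overline\pi$; (b) each monomial factor is sent to its inverse, because the parity counts of integers within the corresponding intervals are mirrored, so the overall effect on $R_{\s,\nu}^\pi$ is precisely $\overline{\,\cdot\,}^\parallel$; (c) a global shift of all heights by $H$ flips every parity when $H$ is odd, which translates into the swap $D_\bullet\leftrightarrow D_\circ$. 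Summing over both parities of $H$ then produces exactly the two $\bullet/\circ$-swapped terms on the left-hand side, and matches them with the corresponding two terms on the right-hand side. The sign factor $(-1)^{|E(\s)|}$ coming from the identity $\overline{B}^\parallel=-B$ (immediate from~\eqref{eq:BD}) enters the computation and must cancel in the pairing.

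The main obstacle lies in this last step: the reflection is not intrinsic to labeled schemes, since the root vertex is constrained to height $0$, and one cannot literally reflect heights inside $\mathcal{L}_{\s,\nu}^\pi$. The identity must therefore be established at the level of formal generating series, by interpreting $\overline{\,\cdot\,}^\parallel$ as the generating series of ``virtual'' admissible sequences with possibly negative entries and recognizing that the two-terms symmetrization kills the parity ambiguity of $H$. Carefully tracking the sign contributions from $\overline{B}^\parallel=-B$ and the parity bookkeeping on the forced $h_i=0$ initial block is the most technical part. Once the mirror identity is proved, summing it over $\pi\in\mathfrak{S}_{V(\s)}$ -- using that $\pi\mapsto\overline\pi$ is an involution on permutations -- shows that $R_{\s,\nu}(D_\bullet,D_\circ)+R_{\s,\nu}(D_\circ,D_\bullet)$ is $\parallel$-symmetric; combined with its rationality in $D_\bullet,D_\circ$ and its obvious $\bullet/\circ$-symmetry, \cref{lem:criterion} then yields the final conclusion.
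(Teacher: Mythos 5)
Your encoding of labeled schemes by height sequences $(h_i)$ and the strategy of reflecting heights is on the right track in spirit, but the plan as stated has several genuine gaps that would derail the proof.

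First, your step (a) — that the reversal $(h_i)\mapsto(H-h_{n+1-i})$ matches the strict-inequality positions with the $\nu$-descents of $\overline\pi$ — is false. By Proposition~\ref{prop:mirror}, a $\nu$-descent of $\overline\pi$ at $k$ corresponds to a $\nu$-\emph{non}-descent of $\pi$ at $n-k$: one has $\delta^{\overline\pi,k:+}_{\s,\nu}=1-\delta^{\pi,(n-k):+}_{\s,\nu}$, not an equality. Hence the reversal map is not a bijection between the two admissible families; it sends ``strict at $\nu$-descents of $\pi$'' to ``strict at $\nu$-\emph{non}-descents of $\overline\pi$''. The paper circumvents this algebraically rather than combinatorially: inverting the geometric factor $D^{\delta\cdot \mathtt{C}}/(1-D^{\mathtt{C}})$ yields $-D^{(1-\delta)\cdot \mathtt{C}}/(1-D^{\mathtt{C}})$, i.e.\ the strict/weak constraint flips \emph{at the cost of a sign}. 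Those $|V(\s)|-1$ signs combine with the $(-1)^{|E(\s)|}$ from $\overline{B}^\parallel=-B$ to give $(-1)^{|E(\s)|+|V(\s)|-1}=(-1)^{2g}=1$ by Euler's formula. Your plan only mentions the $B$-signs and misses the signs produced by the strict/weak swap, which are exactly what makes the cancellation work.

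Second, the auxiliary integer $H$ is not a free parameter: the requirement that the reflected sequence be the height sequence of a labeled scheme (i.e.\ that its root vertex have height $0$, by Claim~\ref{cla:height0}) forces $H=h_{\pi^{-1}(\v(\rho))}=0$. So ``summing over both parities of $H$'' is not available; the two $\bullet/\circ$-swapped terms in the statement do not arise from a choice of $H$. In the paper they come instead from the two pieces ${}^\bullet\!S$, ${}^\circ\!S$ of the partition \eqref{eq:L+}--\eqref{eq:L-}, and more fundamentally the bivariate case requires tracking the parity of every height difference, which is why Section~\ref{sec:bivSym} introduces binary bijections $(\pi,\zeta)$ and the recursive truncation machinery of Lemmas~\ref{lem:bivEnumOrdered} and~\ref{lem:bivSymMir}. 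Your proposal does not substantively engage with this layer. As a minor additional point, ``$h_i=0$ for $i\leq\pi^{-1}(\v(\rho))$'' presupposes that all heights are nonnegative, which need not hold: elements of $\mathcal{R}$ are well-labeled but not necessarily well-rooted, so labels (and heights) can be negative. The correct parameterization, as in the proof of Lemma~\ref{lem:uniEnumOrdered}, uses only the increments $I_k=h_{k+1}-h_k\geq\delta^{\pi,k:+}_{\s,\nu}$ together with the single anchor $h_{\pi^{-1}(\v(\rho))}=0$.
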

We obtain as an immediate corollary: 
\begin{corollary}
\label{thm:symmetry}
For any $\s\in \mathcal{S}$, the series $R_{\s}^{{\yy}}(D_\bullet,D_\circ):=R_\s(D_\bullet,D_\circ)+R_\s(D_\circ,D_\bullet)$ is a rational function of $D_\bullet$ and $D_\circ$, and is $\parallel$-symmetric as a function of $D_\bullet$ and $D_\circ$.
\end{corollary}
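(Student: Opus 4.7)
My plan is to derive the corollary as a direct averaging consequence of Theorem~\ref{thm:MirrorR}. First, I would invoke Corollary~\ref{cor:consistent} to fix some consistent naming $\nu$ of $\s$. By construction, each labeled scheme $\l \in \mathcal{L}_\s$ has a uniquely defined height order $\pi_{\l,\nu}$ relative to $\nu$, so $\{\mathcal{L}_{\s,\nu}^\pi\}_{\pi \in \mathfrak{S}_{V(\s)}}$ partitions $\mathcal{L}_\s$. Passing through the definition $\mathcal{R}_{\s,\nu}^\pi = \bigcup_{\l\in\mathcal{L}_{\s,\nu}^\pi}\mathcal{R}_\l$ given in~\eqref{eq:defRSNuPi}, this induces a finite partition of $\mathcal{R}_\s$, hence $R_\s = \sum_{\pi\in\mathfrak{S}_{V(\s)}} R_{\s,\nu}^\pi$.

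Next, I would sum the identity of Theorem~\ref{thm:MirrorR} over all $\pi \in \mathfrak{S}_{V(\s)}$. The key point is that the mirror operation $\pi \mapsto \overline{\pi}$ from Definition~\ref{def:mirror} is an involution of $\mathfrak{S}_{V(\s)}$. Reindexing the right-hand side along $\pi \leftrightarrow \overline{\pi}$ therefore transforms it into
\[
\sum_\pi \bigl(R_{\s,\nu}^{\pi}(D_\bullet,D_\circ)+R_{\s,\nu}^{\pi}(D_\circ,D_\bullet)\bigr) = R_\s(D_\bullet,D_\circ)+R_\s(D_\circ,D_\bullet) = R_\s^{\yy}(D_\bullet,D_\circ).
\]
On the left-hand side, the operator $f \mapsto \overline{f}^\parallel$, being merely the substitution of $D_\bullet^{-1},D_\circ^{-1}$ for $D_\bullet,D_\circ$, is linear, so summation commutes with it and the left-hand side collapses to $\overline{R_\s^{\yy}(D_\bullet,D_\circ)}^\parallel$. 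Equating both sides yields $\overline{R_\s^{\yy}}^\parallel = R_\s^{\yy}$, which is exactly the $\parallel$-symmetry claim.

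For rationality, I would simply observe that each summand $R_{\s,\nu}^\pi$ is rational in $D_\bullet$ and $D_\circ$ by Theorem~\ref{thm:MirrorR}, and the indexing set $\mathfrak{S}_{V(\s)}$ is finite, so the sum is again a rational function. I do not expect a real obstacle here: all of the combinatorial difficulty has been absorbed into Theorem~\ref{thm:MirrorR}, and the corollary only requires recognizing that the mirror involution on $\mathfrak{S}_{V(\s)}$ lets one symmetrize cleanly over the height-order indexing.
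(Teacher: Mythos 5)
Your proof is correct and follows essentially the same route as the paper: fix a consistent naming via Corollary~\ref{cor:consistent}, decompose $R_\s = \sum_{\pi} R_{\s,\nu}^\pi$, sum the identity of Theorem~\ref{thm:MirrorR} over $\pi$, and exploit that the mirror operation is a bijection (in fact an involution) on $\mathfrak{S}_{V(\s)}$ together with the linearity of $f\mapsto\overline{f}^\parallel$. Your treatment is slightly more explicit about the reindexing step and about rationality being preserved under a finite sum, but it is the same argument.
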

\begin{proof}
Let $\s$ be an unlabeled scheme (endowed with a consistent naming). Then: 
\[
  R_\s=\sum_{\pi\in \mathfrak{S}_{\s}}R_{\s}^\pi.
\]
It follows directly from Theorem~\ref{thm:MirrorR} that:
\[
  \overline{R_\s^{\yy}}^\parallel=\left(\sum_{\pi\in \mathfrak{S}_{\s}}R_{\s}^{\overline{\pi}}\right)^{\yy}=R_\s^{\yy},
\]
where the last equality comes from the fact that the mirror operation is a bijection on $\mathfrak{S}_{\s}$.
\end{proof}

The proof of Theorem~\ref{thm:MirrorR} will be the purpose of the next two sections but we first explain why it concludes the bijective proof of Theorem~\ref{thm:bivExprRat}. 
\smallskip

For $\s\in \mathcal S$, its unrooted map $\mathfrak u$ (recall \cref{def:unroot}) is called an \emph{unrooted scheme}. As before, we denote by $M_{\u}(z_\bullet,z_\circ)$ the generating series of maps which correspond to the unrooted scheme $\u$. In other words, the image of these maps by the bijection given in Corollary~\ref{cor:refOp} are good maps that admit $\mathfrak{u}$ as unrooted scheme.

\begin{theorem}\label{thm:ratScheme}
For any unrooted scheme $\u$, the generating series $M_{\u}(z_\bullet,z_\circ)$ is a rational function of the series $T_\bullet$ and $T_\circ$ defined in~\eqref{eq:dectree}.

Hence, $M_g(z_\bullet,z_\circ)$ is also a rational function of $T_\bullet$ and $T_\circ$.
\end{theorem}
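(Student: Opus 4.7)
The plan is to combine the three main ingredients already set up in the excerpt: the bijection of \cref{cor:refOp}, the rerooting identity of \cref{thm:shortcut}, and the rationality/symmetry package given by \cref{thm:symmetry} together with the criterion of \cref{lem:criterion}. The proof is essentially a finite-summation argument on top of these results, and no new combinatorial input is required.

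First, I would group good maps according to their unrooted core. Via \cref{cor:refOp} one has $M_\u = O^\times_\u$, and canonically decomposing along the pruning $\Psi$ gives
\[
M_\u(z_\bullet,z_\circ) \;=\; \sum_{\overline{\r}\,:\,\Theta(\overline{\r})=\u} O^\times_{\overline{\r}}(z_\bullet,z_\circ).
\]
Next, I would apply \cref{thm:shortcut} to each summand. Vertices of interior degree $4$ are never removed when pruning a core to its scheme (degree-$2$ vertices are removed and the two incident edges are merged, so the interior degrees of the surviving vertices are unchanged). Therefore $|\mathring n_2(\overline{\r})| = |\mathring n_2(\u)|$ does not depend on $\overline{\r}$ in the summation, and the denominator factors out:
\[
M_\u(z_\bullet,z_\circ) \;=\; \frac{R_\u(T_\bullet,T_\circ)+R_\u(T_\circ,T_\bullet)}{2g-|\mathring n_2(\u)|},
\]
where $R_\u \eqdef \sum_{\overline{\r}:\Theta(\overline{\r})=\u} R_{\overline{\r}}$ is the generating series of $4$-valent scheme-rooted well-labeled well-oriented blossoming cores whose unrooted scheme is $\u$.

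Second, I would establish that $R_\u^{\yy}(t_\bullet,t_\circ) \eqdef R_\u(t_\bullet,t_\circ)+R_\u(t_\circ,t_\bullet)$ is a rational function of $t_\bullet$ and $t_\circ$. Since an unrooted scheme has only finitely many rootings, $R_\u$ is a finite sum of $R_\s$ for the rooted schemes $\s\in\mathcal{S}$ with $\bar\s=\u$. By \cref{thm:symmetry}, each $R_\s^{\yy}(D_\bullet,D_\circ)$ is a rational $\parallel$-symmetric function of $D_\bullet, D_\circ$, and both properties are preserved under finite sums. Hence $R_\u^{\yy}$, viewed as a function of $D_\bullet$ and $D_\circ$, is rational and $\parallel$-symmetric. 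Being also symmetric by construction, \cref{lem:criterion} applies directly to $R_\u^{\yy}$ and yields its rationality as a function of $t_\bullet$ and $t_\circ$.

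Finally, the substitution $t_\bullet\mapsto T_\bullet$, $t_\circ\mapsto T_\circ$ turns $R_\u^{\yy}(t_\bullet,t_\circ)$ into a rational function of $T_\bullet$ and $T_\circ$; dividing by the nonzero integer $2g-|\mathring n_2(\u)|$ (which is positive for $g\geq 1$: an Euler-characteristic computation on a $4$-valent scheme with $a$ vertices of interior degree~$3$ and $b=|\mathring n_2(\u)|$ vertices of interior degree~$4$ gives $a+2b=4g-2$, so $b\leq 2g-1$) concludes the proof that $M_\u$ is rational in $T_\bullet,T_\circ$. The global statement then follows immediately: for $g\geq 1$ there are only finitely many unrooted schemes of genus $g$, so $M_g=\sum_{\u\in\overline{\mathcal{S}}_g} M_\u$ is a finite sum of rational functions; the case $g=0$ is classical and due to Arquès. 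The only mildly delicate point in the plan is the bookkeeping of rooted versus unrooted schemes in the passage from $\mathcal{S}$ to $\overline{\mathcal{S}}$, but this is purely formal; all the substantive content is absorbed in \cref{thm:shortcut}, \cref{thm:symmetry} and \cref{lem:criterion}.
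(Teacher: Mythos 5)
Your proof is correct and follows essentially the same route as the paper: combine Theorem~\ref{thm:shortcut}, Corollary~\ref{thm:symmetry}, and Lemma~\ref{lem:criterion}, then sum over the finitely many rooted schemes with a given unrooted scheme and over the finitely many unrooted schemes of a given genus. You are more explicit about some bookkeeping the paper leaves implicit (the invariance of $|\mathring n_2|$ in passing from core to scheme, and the positivity of $2g-|\mathring n_2(\u)|$ via an Euler-characteristic count), but these are routine verifications rather than a different argument.
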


\begin{proof}

 \cref{lem:criterion,thm:symmetry} directly imply that the series $R_{\s}^{{\yy}}$ is a rational function of $t_\circ$ and $t_\bullet$.  Fix now $\u$ in $\mathcal{\bar S}$. The number of unlabeled schemes that admit $\u$ as unrooted scheme is equal to $2g-|\mathring n_2(\u)|$ and is hence finite. It implies that $R_{\u}^{{\yy}}$ is also rational as a series in $t_\circ$ and $t_\bullet$.

By \cref{thm:shortcut}, we deduce that $O_{\u}^{\times}$ is rational in $T_\circ$ and $T_\bullet$. And finally, by \cref{cor:refOp}, we conclude that $M_{\u}$ is a rational function of $T_\circ$ and $T_\bullet$. 

Since the number of unrooted schemes of genus $g$ is finite, it implies that $M_g(z_\bullet,z_\circ)$ is also a rational function of $T_\bullet$ and $T_\circ$, which concludes the proof.
\end{proof}

\section{The univariate case: Combinatorial proof of Theorem~\ref{thm:BC91}}
\label{sec:uniSym}
As a warm-up, we first prove a specialization of \cref{thm:MirrorR} stated in Theorem~\ref{lem:uniSymMir}, which gives a simpler combinatorial proof of Theorem~\ref{thm:BC91} than the one previously obtained in~\cite{Lep19}.

In this section, all the generating series considered are assumed to be univariate. But, with a slight abuse of notation, we keep the same notation. So that, throughout this section, we have for instance $R_\s(t)\coloneqq R_{\s}(t,t)$, $D(t):=D_\circ(t,t)=D_\bullet(t,t)$, $B(t)\coloneqq B(t,t)$ and $T(z):=T_\circ(z,z)=T_\bullet(z,z)$. Moreover, for any function $f$ of $D$, we set $\overline{f}^\parallel(D):=f(1/D)$. 

In all this section, we fix $\s\in \mathcal{S}$ and $\pi\in \mathfrak{S}_{\s}$, where we recall that $\mathfrak{S}_{\s}$ is the set of bijections from $V(\s)$ to $|V(\s)|$.

\medskip

The main result of this section is the following:
\begin{theorem}\label{lem:uniSymMir}
The generating series $R_{\s}^\pi$ is a rational function of $D$, and we have the following identity: 
\begin{equation}
\overline{R_{\s}^{\overline{\pi}}}^\parallel(D)=R_{\s}^\pi(D).
\end{equation}
\end{theorem}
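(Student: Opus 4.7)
The plan is to give an explicit rational closed form for $R_{\s,\nu}^\pi(D)$ and deduce the mirror identity from that formula together with the transformation rules in Proposition~\ref{prop:mirror}. Applying Lemma~\ref{lem:decompCore} with $t_\bullet=t_\circ=t$ and $D_\bullet=D_\circ=D$ (so $\Delta_i^{i+1}(t,t)=t$ and $\Delta_i^j(D,D)=D^{|j-i|}$) yields
\begin{equation*}
R_\l(t) = t^{|\mathcal{S}^\rho(\s)|}\, B^{|E(\s)|}\, D^{N(\l)}, \qquad N(\l) := \sum_{e \in E(\s)} \bigl|\lambda_\l^1(e) - \lambda_\l^0(e)\bigr|,
\end{equation*}
whose prefactor depends only on $\s$. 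An element $\l \in \mathcal{L}_{\s,\nu}^\pi$ is determined by the height profile $H_k := h_\l(\pi(k))$, subject to $H_{k_0}=0$ (Claim~\ref{cla:height0}, with $k_0 = \pi^{-1}(\rho_\s)$) and $H_{k+1}-H_k \geq \delta^{\pi,k:+}_{\s,\nu}$. Setting $g_k := H_{k+1}-H_k-\delta^{\pi,k:+}_{\s,\nu} \in \mathbb{Z}_{\geq 0}$ identifies $\mathcal{L}_{\s,\nu}^\pi$ with $\mathbb{Z}_{\geq 0}^{|V(\s)|-1}$.

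For each edge $e=\{u,v\}$ with $\pi^{-1}(u)=a<b=\pi^{-1}(v)$, the signed label difference equals $\lambda_\l^1(e) - \lambda_\l^0(e) = \sum_{a \leq k < b} g_k + \Delta_e$, where $\Delta_e := \sum_{a\leq k<b}\delta^{\pi,k:+}_{\s,\nu}+(r_v-r_u)$ and $r_u,r_v$ are the relative labels at the endpoints (determined by the half-edge types of $e$). When $\Delta_e\geq 0$, the absolute value equals this expression; when $\Delta_e<0$, it flips sign for sufficiently small gaps, and this sign flip is captured precisely by the overfit/underfit statistics of Definition~\ref{def:ovFitOfEd}. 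A case analysis organizing these flips yields
\begin{equation*}
N(\l) = N_0(\s,\pi,\nu) + \sum_{k=1}^{|V(\s)|-1} \mathtt{C}^{\pi,k:+}_\s \cdot g_k
\end{equation*}
for an explicit integer constant $N_0(\s,\pi,\nu)$. A geometric summation then produces
\begin{equation*}
R_{\s,\nu}^\pi(D) = t^{|\mathcal{S}^\rho(\s)|}\, B^{|E(\s)|}\, D^{N_0(\s,\pi,\nu)} \prod_{k=1}^{|V(\s)|-1} \frac{1}{1 - D^{\mathtt{C}^{\pi,k:+}_\s}},
\end{equation*}
proving rationality in $D$.

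For the mirror identity, the univariate specializations $t(1/D)=t(D)$ and $B(1/D) = -B(D)$ (from $B=(1+D)/(1-D)$), together with $\frac{1}{1-D^{-c}} = -\frac{D^c}{1-D^c}$, produce overall signs $(-1)^{|E(\s)|}$ and $(-1)^{|V(\s)|-1}$ that multiply to $(-1)^{2g}=1$ via Euler's formula $|E(\s)|=|V(\s)|+2g-1$. Proposition~\ref{prop:mirror} shows that $(\mathtt{C}^{\overline\pi,k:+}_\s)_k$ is a reindexing of $(\mathtt{C}^{\pi,k:+}_\s)_k$, so the product factors coincide, and it remains to check that
\begin{equation*}
N_0(\s,\pi,\nu) + N_0(\s,\overline\pi,\nu) = \sum_{k=1}^{|V(\s)|-1} \mathtt{C}^{\pi,k:+}_\s,
\end{equation*}
which follows from the mirror transformations of the overfit/underfit and discordance statistics. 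The main obstacle is the case analysis for the absolute values in $N(\l)$: the statistics introduced in Section~\ref{subsub:statistics} are designed specifically to handle this in a mirror-equivariant way, so once the bookkeeping of $N_0$ is carried out in terms of $\mathtt{O}^{\pi,k}_\s$, $\mathtt{U}^{\pi,k}_\s$, and $\delta^{\pi,k:+}_{\s,\nu}$, the final identity reduces to a routine verification.
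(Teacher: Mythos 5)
Your proposal takes essentially the same route as the paper: reduce Lemma~\ref{lem:decompCore} to the univariate specialization, parametrize $\mathcal{L}_{\s,\nu}^\pi$ by the integer gaps of the height profile, resum the geometric series to obtain an explicit rational expression in $D$, and deduce the mirror identity from Proposition~\ref{prop:mirror}, the involutive reindexing $k\mapsto |V(\s)|+1-k$, $B(1/D)=-B(D)$, and Euler's formula. Your $N_0(\s,\pi,\nu)$ is exactly $\mathtt{O}^\pi_\s-\mathtt{U}^\pi_\s+\sum_k\delta_{\s,\nu}^{\pi,k:+}\mathtt{C}_\s^{\pi,k:+}$ after absorbing the extra $D^{\delta\cdot\mathtt{C}}$ from the change of variable $g_k=I_k-\delta^{\pi,k:+}_{\s,\nu}$, and your claimed identity $N_0(\s,\pi,\nu)+N_0(\s,\overline\pi,\nu)=\sum_k\mathtt{C}_\s^{\pi,k:+}$ does hold by $\mathtt{O}_\s^{\overline\pi}=\mathtt{U}_\s^\pi$ and $\delta_{\s,\nu}^{\overline\pi,k:+}=1-\delta_{\s,\nu}^{\pi,\overline k:-}$. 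So the target formula and the endgame are right.

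The gap is in the derivation of the linear form $N(\l)=N_0+\sum_k\mathtt{C}_\s^{\pi,k:+}g_k$. You write that when $\Delta_e<0$ the absolute value \emph{flips sign for sufficiently small gaps}, and that this is ``captured'' by the overfit/underfit statistics. This is misleading: the overfit/underfit statistics are static data of $(\s,\pi)$, independent of the gaps, so they cannot encode a sign that depends on $(g_k)$. If the sign actually flipped for some admissible $(g_k)$, then $|\lambda^1_\l(e)-\lambda^0_\l(e)|$ would be \emph{piecewise} linear in the gaps and the geometric resummation would not produce a single clean rational factor $\frac{1}{1-D^{\mathtt{C}^{\pi,k:+}_\s}}$. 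What actually makes the linear form hold is the hypothesis, which your proposal never invokes, that $\nu$ is a \emph{consistent} naming (this is also what Lemma~\ref{lem:decompCore}'s companion Lemma~\ref{lem:uniEnumOrdered} assumes). Consistency guarantees that for an offset edge $\{u,v\}$ with $\pi^{-1}(u)<\pi^{-1}(v)$, the heights $h_\l(u)\leq h_\l(v)$ in \emph{every} $\l\in\mathcal{L}_{\s,\nu}^\pi$, and moreover that the sign of $\epsilon_\l(\{u,v\})$ is constant over $\mathcal{L}_{\s,\nu}^\pi$: there is no ``sufficiently small gaps'' regime in which the absolute value flips. This is the one place where consistency is used, and without it the argument you sketch does not go through. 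The fix is simply to assume $\nu$ consistent and verify, as the paper does, that the stature $\Delta(\{u,v\})$ decomposes as $h_\l(v)-h_\l(u)+\epsilon_\l(\{u,v\})$ with $\epsilon_\l(\{u,v\})\in\{-1,0,1\}$ constant on $\mathcal{L}_{\s,\nu}^\pi$ and determined by the overfit/underfit status, rather than gesturing at a case analysis that would in fact break the rationality.
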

As in Section~\ref{sub:MainMirror}, it gives as an immediate corollary:
\begin{corollary}
\label{thm:uniSym}{}
For any unlabeled scheme $\s\in \mathcal{S}$, the series $R_{\s}(t)$ is rational and symmetric in $D(t)$ and $1/D(t)$.
\end{corollary}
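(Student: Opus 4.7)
The plan is to specialize \cref{lem:decompCore} to the univariate case ($t_\bullet = t_\circ = t$, $D_\bullet = D_\circ = D$) and then sum over $\l \in \mathcal{L}_{\s,\nu}^{\pi}$. Since $\Delta_i^j(D,D) = D^{j-i}$, the formula collapses to
\[
R_\l(t) = t^{|S^\rho(\s)|}\, B^{|E(\s)|}\, D^{\,\sum_{e \in E(\s)} |\lambda^1_\l(e) - \lambda^0_\l(e)|},
\]
where $\s = \us(\l)$, because the contributions of the increasing and decreasing edges in \cref{lem:decompCore} both equal $D^{|\lambda^1 - \lambda^0|}$. The prefactor depends only on $\s$, so the whole dependence on $\l$ is concentrated in the exponent, which is a piecewise-linear function of the heights $(h_k)_{k=1}^n$ of the vertices $\pi(1), \ldots, \pi(n)$.

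The next step is to parametrize $\mathcal{L}_{\s,\nu}^{\pi}$ by the height vector, subject to the constraints $h_{k+1} \geq h_k + \delta^{\pi,k:+}_{\s,\nu}$ (by the definition of the relative-height order) and $h_{k^\star} = 0$ for $k^\star := \pi^{-1}(\text{root})$ (by \cref{cla:height0}). For each edge $e$ of $\s$ joining the $\pi$-positions $i < j$, the convention ``higher label on the left'' together with \cref{def:balancedShiftedOffset} gives $\lambda^1_\l(e) - \lambda^0_\l(e) = (h_j - h_i) + \varepsilon(e)$, where $\varepsilon(e) \in \{0,1,2\}$ depends only on the type of $e$. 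The absolute value can be expanded layer by layer: the contribution of each slab $k$ is a multiple of $h_{k+1} - h_k$, the multiplicity being $\mathtt{C}^{\pi,k:+}_\s$ in the regime where every edge contributes positively, with nonlinear corrections coming exactly from edges whose orientation disagrees with the $\pi$-order when $h_j - h_i$ is small, which is what the overfit/underfit statistics of \cref{def:ovFitOfEd} are designed to measure. Performing the change of variables $\Delta_k = h_{k+1} - h_k - \delta^{\pi,k:+}_{\s,\nu} \in \mathbb{N}$ and summing the resulting geometric series in $D^{\mathtt{C}^{\pi,k:+}_\s}$ (a connectivity argument shows $\mathtt{C}^{\pi,k:+}_\s \geq 1$, so the series converge formally) yields
\[
R_{\s,\nu}^\pi(D) = t^{|S^\rho(\s)|}\, B^{|E(\s)|}\, D^{\,E^{\pi}_{\s}} \prod_{k=1}^{n-1} \frac{D^{\mathtt{C}^{\pi,k:+}_\s \cdot \delta^{\pi,k:+}_{\s,\nu}}}{1 - D^{\mathtt{C}^{\pi,k:+}_\s}},
\]
for an integer $E^{\pi}_{\s}$ encoding the overfits, underfits and shifted buds and depending on the boundary shift coming from $h_{k^\star} = 0$. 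This already proves the rationality of $R_{\s,\nu}^\pi$ as a function of $D$.

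To establish the identity $\overline{R_{\s,\nu}^{\overline{\pi}}}^\parallel(D) = R_{\s,\nu}^\pi(D)$, I would apply $D \mapsto 1/D$ to the product formula for $\overline{\pi}$ and re-index via $k \mapsto \overline{k} = n - k$. The transformation laws of \cref{prop:mirror}, namely $\mathtt{C}^{\overline{\pi},k:+}_\s = \mathtt{C}^{\pi,n-k:+}_\s$ and $\delta^{\overline{\pi},k:+}_{\s,\nu} = 1 - \delta^{\pi,n-k:+}_{\s,\nu}$, together with the elementary identity
\[
\frac{D^{-c(1-\delta)}}{1-D^{-c}} = -\frac{D^{c\delta}}{1-D^c},
\]
show that each factor of the product for $\overline{\pi}$ matches the corresponding factor for $\pi$ up to a sign $-1$. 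The $(n-1)$ minus signs collected this way must cancel the sign coming from $\overline{D^{E^{\overline{\pi}}_\s}}^\parallel = D^{-E^{\overline{\pi}}_\s}$ against $D^{E^{\pi}_{\s}}$; since \cref{prop:mirror} swaps $\mathtt{O}^\pi_\s \leftrightarrow \mathtt{U}^\pi_\s$ and $\mathtt{of}^{\pi,k}_\s \leftrightarrow \mathtt{uf}^{\pi,\overline{k}}_\s$ while preserving edge and stem counts, a parity computation of $E^{\pi}_{\s} + E^{\overline{\pi}}_{\s}$ should yield the required compensation.

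The main obstacle is precisely this last bookkeeping: identifying $E^{\pi}_{\s}$ explicitly in terms of overfits, underfits and bud statistics, and then checking that the parities conspire with the product of $(n-1)$ minus signs so that the equality $\overline{R_{\s,\nu}^{\overline{\pi}}}^\parallel = R_{\s,\nu}^\pi$ holds on the nose. The subtlety is localized around the layer containing the root vertex (where $h_{k^\star} = 0$ shifts the base of the exponent) and around edges whose orientation disagrees with the $\pi$-order (where $|\lambda^1 - \lambda^0|$ fails to unfold linearly). Once the right combinatorial identities among the statistics of \cref{def:enclEdge,def:ovFitOfEd,def:regShiBud} are pinned down, the symmetry falls out.
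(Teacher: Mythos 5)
Your strategy — specialize \cref{lem:decompCore} to the univariate case, parametrize labeled schemes by their height vector, change variables to increments, and sum geometric series — is the same route the paper follows in \cref{lem:uniEnumOrdered} and \cref{lem:uniSymMir}. Up to the cosmetic choice of keeping the factor $t^{|S^\rho(\s)|}$ rather than stripping scheme stems to define ${}^b\!R$ as the paper does, your product formula for $R_{\s,\nu}^\pi$ matches the one the paper derives.

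The gap is in the sign analysis for the mirror identity, which you leave as a hope that ``a parity computation of $E^\pi_\s + E^{\overline{\pi}}_\s$ should yield the required compensation.'' This cannot work: the substitution $D\mapsto 1/D$ sends $D^{E^{\overline{\pi}}_\s}$ to $D^{-E^{\overline{\pi}}_\s}$ with no sign at all, and the mirror relations $\mathtt{O}^{\overline{\pi}}_\s=\mathtt{U}^\pi_\s$, $\mathtt{U}^{\overline{\pi}}_\s=\mathtt{O}^\pi_\s$ force $E^{\overline{\pi}}_\s=-E^\pi_\s$, so the monomial part matches on the nose without producing any compensating sign. The $(n-1)$ factors of $-1$ coming from your elementary identity are instead cancelled by the factor $B^{|E(\s)|}$: one checks from \eqref{eq:BD} that $\overline{B}^{\parallel}=-B$, so that this factor contributes $(-1)^{|E(\s)|}$, and the net sign is $(-1)^{|E(\s)|+|V(\s)|-1}$, which equals $+1$ because $\s$ is a unicellular map on an orientable surface, hence $|E(\s)|+|V(\s)|-1=2g$ by Euler's formula. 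You omit the crucial observation $\overline{B}^{\parallel}=-B$ entirely, and the Euler's-formula step is the punch line.

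Two smaller issues. First, you should pin down $E^\pi_\s=\mathtt{O}^\pi_\s-\mathtt{U}^\pi_\s$; it does not involve shifted buds (scheme stems were already pulled into the prefactor $t^{|S^\rho(\s)|}$) nor any ``boundary shift'' from $h_{k^\star}=0$, since the exponent depends only on increments $I_k=h_{k+1}-h_k$ and the normalization of the absolute height is irrelevant. Second, the place where the consistency of $\nu$ enters should be made explicit: it is exactly what guarantees that for an edge $\{u,v\}$ offset toward $v$, the sign of $\epsilon_\l(\{u,v\})$ is determined by the $\pi$-order $\pi^{-1}(u)\lessgtr\pi^{-1}(v)$ rather than by the (a priori varying) heights, which is what allows the sum $\sum_e\epsilon_\l(e)$ to collapse to $\mathtt{O}^\pi_\s-\mathtt{U}^\pi_\s$ independently of $\l$.
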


Note that a proof of \cref{thm:uniSym} is already given in \cite{Lep19}, but we give here a different proof that we were able to extend to the bivariate case.
The differences between the two proofs will be discussed in \cref{rem:diff2proofs}.
Using a univariate version of \cref{lem:criterion} due to \cite{ChMaSc09}, \cref{thm:uniSym} can be used to prove the rationality of $M_g(z)$, thus giving a combinatorial proof of~\cref{thm:BC91}.
\smallskip

The proof of Theorem~\ref{lem:uniSymMir} relies on an explicit formula for the generating series $R_{\s}^\pi$. We denote by $\tilde{R}$ the generating series of $\mathcal{R}$, where we set to 1 the weight of the scheme stems (in other words, we neglect their contribution). This is still a valid generating function since there is only a (fixed) finite number of stems on a given scheme.
We define similarly the generating series $\tilde{R}_{\s}^{\pi}$ and $\tilde{R}_\l$ for the corresponding subfamilies of $\mathcal{R}$.

Fix $\s\in\mathcal{S}$ and $\pi \in \mathfrak{S}_{\s}$, we derive in the following paragraphs a closed expression for $\tilde{R}_{\s}^{\pi}$, stated in Lemma~\ref{lem:uniEnumOrdered}. First by decomposing the family of maps $\mathcal{R}_{\s}^\pi$ according to their labeled scheme, we get:
\[ \tilde{R}_{\s}^\pi = \sum_{\l\in \mathcal{L}_{\s}^\pi}\tilde{R}_\l \]
For a given labeled scheme $\l$, the generating series $\tilde{R}_\l$ can be computed as the product of the contribution of the branches of $\l$. Indeed, setting $D_\bullet=D_\circ$ in \cref{lem:decompCore} and ignoring the contribution of the scheme rootable stems, we get:
\[\tilde{R}_\l=\prod_{e\in E(\l)} B\cdot D^{L(e)},\]
where we set $L(e):=|\lamZ_\l(e)-\lamU_\l(e)|$. For an edge $e=\{u,v\}$, the quantity $L(e)$ depends only on the heights $h_\l(u)$ and $h_\l(v)$, and whether $e$ is offset or not. Namely:
\begin{equation}
L(\{u,v\})=|h_\l(u)-h_\l(v)|+\epsilon_\l(\{u,v\}),
\end{equation} 
where: 
\begin{equation}\label{eq:defEpsilon}
\epsilon_\l(\{u,v\})=
\left\{
\begin{array}{ll}
0, &\text{if $\{u,v\}$ is not offset}\\
1, &\text{if $\{u,v\}$ is offset toward $v$ and } h_\l(v)\geq h_\l(u)\\
-1, &\text{if $\{u,v\}$ is offset toward $v$ and } h_\l(v)< h_\l(u)
\end{array}
\right. .
\end{equation}
Recall the definition of $R_{\s}^\pi$ given in~\eqref{eq:defRSNuPi}, then:
\begin{equation}\label{eq:Runi}
\tilde{R}_{\s}^\pi =B^{|E(\s)|} \cdot \sum_{\l\in \mathcal{L}_{\s}^\pi}\prod_{\substack{\{u,v\}\in E(\l)\\ \pi^{-1}(u)<\pi^{-1}(v)}} D^{h_\l(v)-h_\l(u)+\epsilon_\l(\{u,v\})},
\end{equation}
since for $\l\in\mathcal{L}_\s^\pi$, $\pi^{-1}(u)<\pi^{-1}(v)$ implies that $h_\l(u)\leq h_\l(v)$.
\medskip 

An element $\l$ of $\mathcal{L}_{\s}^\pi$ is fully characterized by a sequence $(h_\l(\pi(k))_{k\in[|V(\s)|]}$. By Claim~\ref{cla:height0}, the height of the root vertex of $\l$ must be equal to 0 so that $\l$ is equivalently characterized by the sequence $(I_k)_{k\in[|V(\s)|-1]}=(h_\l(\pi(k+1))-h_\l(\pi(k)))_{k\in[|V(\s)|-1]}$ of the differences between successive heights of the vertices $\pi(k)$ in $\l$. 

Since $\pi$ gives the ordering of vertices of $\l$ by non-decreasing height, we necessarily have that:
\[
I_k\geq 0 \quad\text{ for }k\in [|V(\s)|-1].
\]
Moreover, by definition of a consistent naming, $I_k$ can be 0 if and only if $\nu(\pi(k+1))>\nu(\pi(k))$. For $k\in [|V(\s)-1]$, we set: 
\begin{equation}\label{eq:defDelta}
\delta^k_{\pi}:=\begin{cases}
1&\text{ if }\nu(\pi(k+1))<\nu(\pi(k))\\
0&\text{ otherwise.}
\end{cases}
\end{equation}
Let $\mathcal{I}_{\s}^\pi$ be the set of sequence of integers $(I_k)_{k\in[|V(\s)|-1]}$ such that $I_k \geq \delta^k_{\pi}$ for any $k$. Then, we can rewrite \eqref{eq:Runi} as:
\begin{equation}\label{eq:Runi2}
\tilde{R}_{\s}^\pi 
=B^{|E(\s)|} \cdot \sum_{(I_k)\in \mathcal{I}_{\s}^\pi}\prod_{\stackrel{\{u,v\}\in E(\l)}{\pi^{-1}(u)<\pi^{-1}(v)}} D^{I_{\pi^{-1}(u)}+\cdots+I_{\pi^{-1}(v)-1}+\epsilon_\l(\{u,v\})}.
\end{equation}
\begin{figure}[t]
\centering
    \subcaptionbox{The unlabeled scheme $\s$ represented as a graph.\label{subfig:schemeGraph}}
    {\includegraphics[page=5,scale=1.1]{images/Scheme_genus2_bis.pdf}}\qquad\ 
      \subcaptionbox{Contribution to $\mathtt{C}_{\pi}^{3}$.\label{subfig:posEnclosing}}
    {\includegraphics[page=6,scale=1.1]{images/Scheme_genus2_bis.pdf}}\qquad\ 
     \subcaptionbox{Contribution to $\mathtt{O}_\pi$.\label{subfig:OverFit}}
    {\includegraphics[page=7,scale=1.1]{images/Scheme_genus2_bis.pdf}}\qquad\ 
\caption{\label{fig:unlabSchemeStatUniv}
Illustration of the quantities introduced in Definitions~\ref{def:Ck} and~\ref{def:OU} on the unlabeled scheme $\s$ of Figure~\ref{fig:naming}. We keep the same naming $\nu$ as previously, namely $\nu(v_i)=i$ for $i=1,\ldots,4$, and still represent shifted half-edges by triple arrows. We consider the height ordering $\pi$, defined by $\pi(1)=v_3$, $\pi(2)=v_1$, $\pi(3)=v_2$ and $\pi(4)=v_4$, and order vertices of $\s$ from bottom to top according to $\pi$.\\
\subref{subfig:schemeGraph}: We easily read from this representation that $\delta_\pi^1=1$ and $\delta_\pi^2=\delta_\pi^3=0$.\\
\subref{subfig:posEnclosing}: Edges contributing to $\mathtt{C}_{\pi}^{3}$ are emphasized, giving $\mathtt{C}_{\pi}^{3}=4$.\\
\subref{subfig:OverFit}: Edges contributing to $\mathtt{O}_\pi$ are emphasized.  giving $\mathtt{O}_{\pi}=3$.}
\end{figure}
To rearrange this expression, consider the representation of $\s$ in which its vertices are ordered with respect to $\pi$ from bottom to top, see Figure~\ref{fig:unlabSchemeStatUniv}. For $k\in V(\s)$, consider a horizontal line situated just above the vertex $\pi(k)$ and define $\mathtt{C}^{k}_\pi$ as the number of edges of $\s$ crossing this line, see Figure~\ref{subfig:posEnclosing}. More formally:
\begin{definition}\label{def:Ck}
For $\s$ an unlabeled scheme, $\pi \in \mathfrak{S}_\s$ and $k\in [|V(\s)|-1]$, define $\mathtt{C}^{k}_\pi$ as the number of edges $\{u,v\}\in E(\s)$ such that
\begin{itemize}
\item either $\pi^{-1}(u)\leq k<\pi^{-1}(v)$,
\item or $\pi^{-1}(v)\leq k<\pi^{-1}(u)$.
\end{itemize}
\end{definition}
Observe that the number of terms in which $I_k$ appears in \eqref{eq:Runi2} is precisely equal to $\mathtt{C}_\pi^{k}$ so that we can write~\eqref{eq:Runi2} as: 
\begin{equation}\label{eq:Runi3}
\tilde{R}_{\s}^\pi 
=B^{|E(\s)|} \cdot \sum_{(I_k)\in \mathcal{I}_{\s}^\pi}\prod_{k=1}^{|V(\s)|-1}D^{I_k\cdot \mathtt{C}^{k}_\pi}
\cdot \prod_{\{u,v\}\in E(\l)} D^{\epsilon_\l(\{u,v\})}.
\end{equation}

It only remains to deal with the contributions of $\epsilon_\l(\{u,v\})$. 
\begin{definition}\label{def:OU}
Define respectively $\mathtt{O}_{\pi}$ and $\mathtt{U}_\pi$ as the number of edges $\{u,v\}$ such that the half-edge incident to $u$ is shifted, and such that $\pi^{-1}(u)>\pi^{-1}(v)$ (respectively $\pi^{-1}(u)<\pi^{-1}(v)$), see Figure~\ref{subfig:OverFit}. \\
Letters $\mathtt{O}$ and $\mathtt{U}$ stand for ``over'' and ``under'', to indicate which extremity of the edge is shifted: the top one, or the bottom one.
\end{definition}
Since any shifted edge contributes once to $\mathtt{O}_\pi$ and once to $\mathtt{U}_\pi$, we get\footnote{Including the contribution of shifted edges to $\mathtt{O}_\pi$ and to $\mathtt{U}_\pi$ will make more sense, when we move to the bivariate enumeration where their contribution has to be taken into account.}:
\begin{equation}
\sum_{\{u,v\}\in E(\l)}\epsilon_\l(\{u,v\})=\mathtt{O}_{\pi}-\mathtt{U}_{\pi}.
\end{equation}
Therefore:
\begin{equation}
\tilde{R}_{\s}^\pi 
=B^{|E(\s)|}\cdot D^{\mathtt{O}_{\pi}-\mathtt{U}_{\pi}} \cdot \sum_{(I_k)\in \mathcal{I}_{\s}^\pi}\prod_{k=1}^{|V(\s)|-1}D^{I_k\cdot \mathtt{C}^{k}_\pi}.
\end{equation}

We have just established:
\begin{lemma}\label{lem:uniEnumOrdered}
Fix $\s\in\mathcal{S}$ and $\pi \in \mathfrak{S}_{\s}$, then:
\begin{equation}
\tilde{R}_{\s}^\pi=B^{|E(\s)|}\cdot D^{\mathtt{O}_{\pi} - \mathtt{U}_{\pi}}\cdot\prod_{k=1}^{|V(\s)|-1}\frac{D^{\delta^{k}_{\pi} \cdot \mathtt{C}^{k}_\pi}}{1-D^{\mathtt{C}^{k}_\pi}},
\end{equation}
where $\mathtt{C}^{k}_\pi$, $\mathtt{O}_\pi$, $\mathtt{U}_\pi$ and $\delta^{k}_{\pi}$ are explicit quantities depending only on $\s$, $\pi$ and $k$ and defined respectively in Definitions~\ref{def:Ck} and~\ref{def:OU} and in~\eqref{eq:defDelta}.
\end{lemma}

All the quantities introduced in the course of the previous proof interact nicely with the mirror operation defined in Section~\ref{sub:MainMirror}. It follows from the definitions that: 
\begin{fact}\label{fact:mirrorUniv}
Fix $\s\in \mathcal{S}$ and $\pi\in \mathfrak{S}_{\s}$. Recall that $\bar \pi:= k\rightarrow \pi(|V(\s)|-k+1)$, then we have:  
\begin{equation}
\mathtt{O}_{\bar \pi}=\mathtt{U}_\pi.    
\end{equation}
Additionally, for $k\in[|V(\s)|-1]$, write $\bar k = |V(s)|-k$, then:
\begin{equation}
\mathtt{C}_{\bar \pi}^{k}=\mathtt{C}_\pi^{\overline{k}},\quad \text{and}\qquad \delta_{\overline{\pi}}^k=1-\delta_\pi^{\overline{k}}.
\end{equation}
\end{fact}
We can now conclude the proof of Theorem~\ref{lem:uniSymMir}:
\begin{proof}[Proof of Theorem~\ref{lem:uniSymMir}]
Since $t$ and $B$ are rational functions of $D$, the definition of $\tilde{R}_{\s}^\pi$ and \cref{lem:uniEnumOrdered} imply that $R_{\s}^\pi$ is also rational in $D$. Moreover, by \cref{lem:uniEnumOrdered}, we have:
\begin{align}
\overline{\tilde{R}_{\s}^{\pi}}^\parallel
&=\overline{B^{|E(\s)|}\cdot D^{\mathtt{O}_{\pi}-\mathtt{U}_{\pi}}\cdot\prod_{k=1}^{|V(\s)|-1}\frac{D^{\delta^{k}_{\pi} \cdot \mathtt{C}^{k}_\pi}}{1-D^{\mathtt{C}^{k}_\pi}}}^\parallel. \\
\intertext{It follows from~\eqref{eq:BD}, that $\overline{B}^\parallel=-B$ and hence:} 
\overline{\tilde{R}_{\s}^{\pi}}^\parallel &=(-B)^{|E(\s)|}\cdot D^{-\mathtt{O}_{\pi}+\mathtt{U}_{\pi}}\cdot\prod_{k=1}^{|V(\s)|-1}\frac{D^{-\delta^{k}_{\pi} \cdot \mathtt{C}^{k}_\pi}}{1-D^{-\mathtt{C}^{k}_\pi}}.\\
\intertext{We apply the change of variable $\overline{k}\coloneqq |V(\s)|-k$, and obtain, by \cref{fact:mirrorUniv}: } 
\overline{\tilde{R}_{\s}^{\pi}}^\parallel &=(-1)^{|E(\s)|}B^{|E(\s)|}\cdot D^{-\mathtt{U}^{\overline{\pi}}_\s+\mathtt{O}^{\overline{\pi}}_\s}\cdot\prod_{k=1}^{|V(\s)|-1}\frac{D^{(\delta_{\overline{\pi}}^{\overline{k}}-1) \cdot \mathtt{C}_{\overline{\pi}}^{\overline{k}}}}{1-D^{-\mathtt{C}_{\overline{\pi}}^{\overline{k}}}}
\\
\overline{\tilde{R}_{\s}^{\pi}}^\parallel &=(-1)^{|E(\s)|+|V(\s)|-1}\cdot \tilde{R}_{\s}^{\overline{\pi}}.
\end{align}
Since $\s$ is a unicellular map of an orientable surface, by Euler's formula, we get $(-1)^{|E(\s)|+|V(\s)|-1}=1$. Hence:
\begin{equation}
\overline{\tilde{R}_{\s}^{\pi}}^\parallel =\tilde{R}_{\s}^{\overline{\pi}}.
\end{equation}
From \eqref{eq:tbul} and \eqref{eq:tcir} of Section~\ref{sec:symRat}, it is easy to see that, as a function of $D$, $\bar{t}^\parallel =t$. Hence reincorporating the weights of the scheme stems in $\overline{\tilde{R}_{\s}^{\pi}}$ gives:
\begin{equation}
\overline{R_{\s}^{\pi}}^\parallel=R_{\s}^{\overline{\pi}},
\end{equation}
which concludes the proof.
\end{proof}

\begin{remark}\label{rem:diff2proofs}
Note that the proof presented here differs from the one given in \cite{Lep19}. In the former, the schemes are also grouped by their height order. However, unlike here, where the equality cases of the height ordering are resolved with the naming, to get a total ordering, the height ordering used in \cite{Lep19} was only a partial ordering. It results in some alternated sum on partial orders, that end up canceling out after a careful analysis of their structure.
\end{remark}

\section{The bivariate case: Combinatorial proof of Theorem~\ref{thm:MirrorR}}\label{sec:bivSym}

\subsection{Signed height orders}
In the bivariate case, the weight of a branch in a labeled scheme depends not only on the difference of its labels but also on their parity. 
For this reason, we introduce \emph{signed bijections}, that allow to group labeled schemes not only through the height ordering of their vertices, but also through the parity of the difference of heights between any pair of vertices. 

\smallskip

To do so, some additional terminology is needed. For $\s\in \mathcal{S}$, a \emph{signed bijection} of $\s$ is a couple $(\pi,\zeta)\in \mathfrak{S}_{\s}\times (\mathbb{Z}/2\mathbb{Z})^{[|V(\s)|-1]}$.
The set of signed bijections associated to $\s$ is denoted by $\mathfrak{S}_{\s}^{\ptdeux}$.

For $\l\in\mathcal{L}$, such that $\us(\l)=\s$, the \emph{signed height ordering of $\l$} is the couple made of $\pi_{\l}$ -- its height ordering -- and of the function:
\begin{equation}
\zeta_{\l}:[|V(\s)|-1]\to \mathbb{Z}/2\mathbb{Z} \quad \text{ such that }\quad \zeta_{\l}(k)\equiv h_\l(\pi_{\l}(k+1))-h_\l(\pi_{\l}(k)) \text{ mod }2.
\end{equation}

\bigskip

{\bf For the rest of this section, we fix $\s \in \mathcal{S}$, $\nu$ a consistent naming of $\s$ and $(\pi,\zeta) \in \mathfrak{S}_{\s}^{\ptdeux}$.
}

\bigskip

We denote by $\mathcal{L}_{\s}^{(\pi,\zeta)}$ the set of labeled schemes $\l$ such that $\us(\l)=\s$ and $(\pi_{\l},\zeta_{\l})=(\pi,\zeta)$. Observe for later use that, for any $\l\in \mathcal{L}_{\s}^{(\pi,\zeta)}$, the parity of the height of its vertices is prescribed by $\zeta$ (and the fact that the root vertex of $\l$ has height $0$ by Claim~\ref{cla:height0}). In particular, either $h_\l\big(\pi^{-1}(1)\big)$ is odd for every $\l\in \mathcal{L}_{\s}^{(\pi,\zeta)}$, or it is even for every $\l\in \mathcal{L}_{\s}^{(\pi,\zeta)}$.

We denote by $\mathcal{R}_{\s}^{(\pi,\zeta)}$ the subset of elements of $\mathcal{R}$ whose labeled scheme belongs to $\mathcal{L}_{\s}^{(\pi,\zeta)}$. Similarly to the previous section, we write $\tilde R_{\s}^{(\pi,\zeta)}$ for the generating series of elements of $\mathcal{R}_{\s}^{(\pi,\zeta)}$, in which we neglect the contribution of the scheme stems. By Lemma~\ref{lem:decompCore}, we have: 
\begin{equation}\label{eq:RtildeBivariate}
\tilde{R}_{\s}^{(\pi,\zeta)}=B^{|E(\s)|}\cdot S^{(\pi,\zeta)}_\s(D_\bullet,D_\circ),
\end{equation}
where $S^{(\pi,\zeta)}_\s$ is the generating series defined by: 
\begin{equation}\label{eq:defSFull}
    S_\s^{(\pi,\zeta)}(D_\bullet,D_\circ)=
    \sum_{\l\in \mathcal{L}_\s^{(\pi,\zeta)}}\left(
     \prod_{e\in \Eup(\l)}\Delta_{\lambda_\t^{0}(e)}^{\lambda_\t^{1}(e)}(D_{\bullet},D_\circ)
    \prod_{e\in \Edo(\l)}\Delta_{\lambda_\t^{1}(e)}^{\lambda_\t^{0}(e)}(D_\circ,D_{\bullet})\right).
\end{equation}
\medskip

To establish Theorem~\ref{thm:MirrorR}, we extend the \emph{mirror operation} of Section~\ref{sub:MainMirror} to signed height orders. For $(\pi,\zeta)\in \mathfrak{S}_{\s}^{\ptdeux}$ and $k\in [|V(\s)|-1]$ we set:
\begin{equation}\label{eq:defMirrorZeta}
    \overline \zeta(k):=\zeta(\bar k), \quad \text{where we recall that } \bar k= |V(\s)|-k,
\end{equation}
and define $\overline{(\pi,\zeta)}:=(\overline \pi,\overline \zeta)$. 
\medskip

Then, the main result of this section is the following: 
\begin{proposition}\label{prop:MirrorUltimeS} We have the following inequality of generating series:
\begin{equation}
\overline{\left(S_\s^{(\pi,\zeta)}(D_\bullet,D_\circ)+S_\s^{(\pi,\zeta)}(D_\circ,D_\bullet)\right)}^\parallel =(-1)^{|V(\s)|-1}\left(S_\s^{\overline{(\pi,\zeta)}}(D_\bullet,D_\circ)+S_\s^{\overline{(\pi,\zeta)}}(D_\circ,D_\bullet)\right)
\end{equation}
\end{proposition}
The rest of this section is devoted to the proof of this result and how this implies Theorem~\ref{thm:MirrorR}. The proof is rather technical to say the least, and to give at least some intuitions about the quantity at stakes before dwelving into the computations, we start with an introductory example.

\begin{example}
Consider the graph $\mathfrak{g}$ reduced to two vertices $u$ and $v$ linked by an edge, and rooted at the vertex $u$. Set $\pi(u)=1$ and $\pi(v)=2$, and $\nu(1)=u$ and $\nu(2)=v$. This graph does obviously not correspond to a scheme, but we are going to compute $S^{(\pi,\zeta)}_{\mathfrak{g}}$ as if $\mathfrak{g}$ were a scheme with a unique unshifted edge (so that the naming $\nu$ is necessarily consistent). Moreover, the edge $\{u,v\}$ is oriented from $v$ to $u$ in the canonical orientation of $\mathfrak{g}$ so that $\{u,v\}\in E^\downarrow(\mathfrak{g})$, with this choice of $\pi$.

We start with the case where $\zeta(1)=0$. Since $\pi(u)=1\leq 2=\pi(v)$, we have that $h(u)\leq h(v)$. The case of equality is possible since $\nu(1)=u$ and $\nu(2)=v$. It follows from the definition of $S^{(\pi,\zeta)}_{\mathfrak{g}}$ given in~\eqref{eq:defSFull} that: 
\[
S^{(\pi,\zeta)}_{\mathfrak{g}}(D_\bullet,D_\circ)=\sum_{k\geq 0} D_\circ^kD_\bullet^k = \frac{1}{1-D_\circ D_\bullet}.
\]
Similarly, we have:
\[
S^{\overline{(\pi,\zeta)}}_{\mathfrak{g}}(D_\bullet,D_\circ)=\sum_{k\geq 1} D_\circ^kD_\bullet^k = \frac{D_\circ D_\bullet}{1-D_\circ D_\bullet},
\]
from which it follows that $\overline{S^{(\pi,\zeta)}_{\mathfrak{g}}}^\parallel = \overline{S^{(\pi,\zeta)}_{\mathfrak{g}}}^\times= (-1)^{|V(\mathfrak{g})|-1}S^{\overline{(\pi,\zeta)}}_{\mathfrak{g}}$, similarly to the univariate case (where the distinction between $\parallel$-symmetric and $\times$-symmetric was not pertinent). 

We now deal with the case $\zeta(1)=1$, this time we have: 
\[
S^{(\pi,\zeta)}_{\mathfrak{g}}(D_\bullet,D_\circ)=D_\circ\cdot\sum_{k\geq 0} D_\circ^kD_\bullet^k = \frac{D_\circ}{1-D_\circ D_\bullet},
\]
and similarly: 
\[
S^{\overline{(\pi,\zeta)}}_{\mathfrak{g}}(D_\bullet,D_\circ)=D_\circ\cdot\sum_{k\geq 0} D_\circ^kD_\bullet^k = \frac{D_\circ}{1-D_\circ D_\bullet},
\]
from which it follows that $\overline{S^{(\pi,\zeta)}_{\mathfrak{g}}}^\times = (-1)^{|V(\mathfrak{g})|-1}S^{\overline{(\pi,\zeta)}}_{\mathfrak{g}}$. We insist on the fact that this equality does not hold for $\parallel$-symmetry. 

However for both values of $\zeta$, it follows directly from the previous computations that the result of Proposition~\ref{prop:MirrorUltimeS} holds for the graph $\mathfrak{g}$. 

\end{example}

\subsection{Truncations: definition and decomposition} 
%
Unlike in the univariate case developed in the previous section, we could not obtain a simple closed formula for the bivariate series $S_{\s}^{(\pi,\zeta)}$. Instead of a bivariate analogue of \cref{lem:uniEnumOrdered}, we give a recursive description of that series, which will be establish by induction on the number of vertices in the scheme. 

\begin{figure}[t]
\centering
    \subcaptionbox{Orientation of $\us(\l)$.\label{subfig:orientationTrunc}}
    {\includegraphics[page=9,scale=.85]{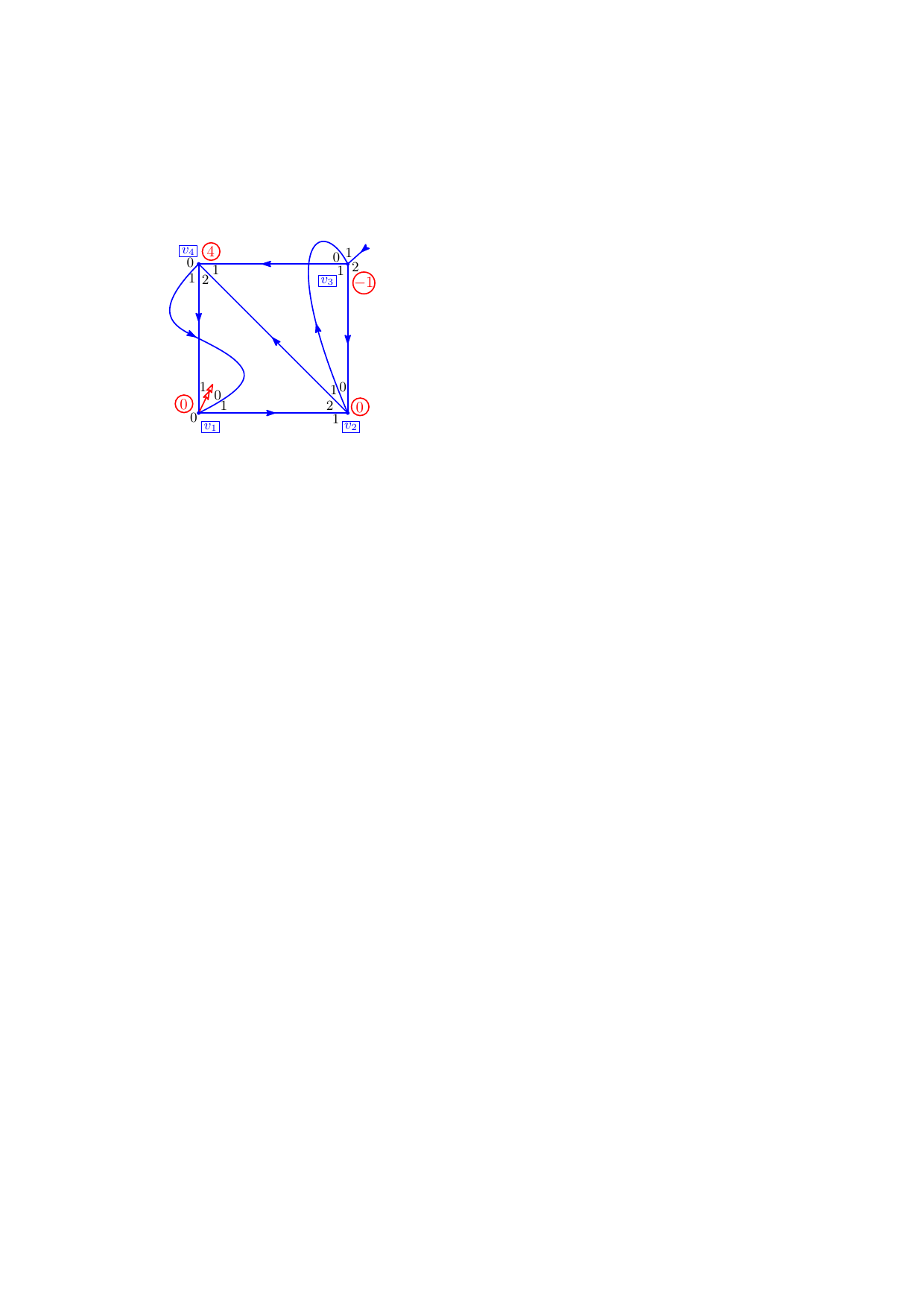}}\qquad\ 
      \subcaptionbox{Ascending truncation at height 1,\label{subfig:trunc1}}
    {\includegraphics[page=10,scale=.85]{images/Truncations.pdf}}\qquad\ 
     \subcaptionbox{at height 2,\label{subfig:trunc2}}
    {\includegraphics[page=11,scale=.85]{images/Truncations.pdf}}\qquad\ 
     \subcaptionbox{and at height 3\label{subfig:trunc3}}
    {\includegraphics[page=12,scale=.85]{images/Truncations.pdf}}
\caption{\label{fig:Truncation} We still consider the labeled scheme of Figure~\ref{fig:naming}\subref{subfig:schemeNaming}, with the same naming $\nu$. In~\subref{subfig:orientationTrunc}, we represent \emph{as a graph} the interior map of its unlabeled scheme. The ordering (from bottom to top) of its vertices is given by its height ordering relative to $\nu$. The orientation of its edges corresponds to the canonical orientation of $\l$.\\
In~\subref{subfig:trunc1}, \subref{subfig:trunc2} and \subref{subfig:trunc3}, we represent respectively the ascending truncation of $\l$ at height 1, 2 and 3. For sake of clarity, we do not represent the projection of the canonical orientation of $\l$ on its truncations. But, it can be deduced from \subref{subfig:orientationTrunc}.}
\end{figure}

\begin{definition}
For $\l \in\mathcal{L}_{\s}^\pi$ and $k\in [|V(\l)|]$, the \emph{ascending truncation of $\l$ at rank $k$} -- denoted $\Pi^{k:+}(\l)$ -- is the oriented and labeled graph, with typed half-edges, obtained as follows, see Figure~\ref{fig:Truncation}. 

Start with $\l$ endowed with its canonical orientation, and label its vertices by their height. 
Consider the subset $X$ of vertices of $\l$ defined by $X=\pi_{\l}([k-1])$, then:
\begin{itemize}
    \item Delete all the stems of $\l$. 
    \item Declare that all half-edges incident to vertices of $X$ are unshifted (i.e: replace offset edges towards a vertex of $X$ and shifted edges between two vertices of $X$ by unshifted edges, and shifted edges between $u\in X$ and $v\notin X$ by an offset edge towards $v$). In other words, consider that all the relative labels around vertices of $X$ are now equal to 0. 
    \item Finally, merge all the vertices of $X$ with $\pi_{\l}(k)$, which keeps its label $h_{\l}(\pi_{\l}(k))$, and erase the self-loops thus created.
\end{itemize}

We define similarly the \emph{descending truncation of $\l$ at rank $k$} -- denoted by $\Pi^{k:-}(\l)$ -- by setting instead $X=\pi_{\l}([k,|V(\s)|])$ in the previous construction. 
\end{definition}
We now introduce the set of graphs that can be obtained as the truncation of a labeled scheme. 
\begin{definition}
For $(\pi,\zeta)\in \mathfrak{S}_{\s}^{\ptdeux}$, $k\in |V(\s)|$, and $\pm\in\{-,+\}$ we denote by:
\[
\mathcal{T}_{(\pi,\zeta)}^{k:\pm}=\left\{\Pi^{k,\pm}(\l)\,|\,\l\in \mathcal{L}_{\s}^{(\pi,\zeta)}\right\}.
\]

Additionally, we denote by $\mathcal{T}_{(\pi,\zeta)}$ the set of all truncations, i.e. 
\[
    \mathcal{T}_{(\pi,\zeta)}=\bigcup_{k\in |V(\s)|,\pm\in \{-,+\}}\mathcal{T}_{(\pi,\zeta)}^{k:\pm}.
\]
\end{definition}

Vertices of a truncation inherit naturally the height of the vertices of their preimage. For $\t\in\mathcal{T}_{(\pi,\zeta)}$ and $v\in V(\t)$, we denote by $h_\t(v)$ this value. 
We now extend the definition of $\lamZ(e)$ and $\lamU(e)$ for $e\in E(\t)$. Informally, corners incident to vertices of rank not smaller than $k$ in $\l$ keep their labels, while other corners are labeled $h_\l(\pi(k))$. More formally, for $e=\{u_0,u_1\}\in E(\l)$ oriented from $u_0$ to $u_1$, for $k\in[|V(\l)|]$, and for $\varepsilon\in\{0,1\}$, we define:
\begin{align}
    \lambda_\l^{\varepsilon,k:+}(e)&=
    \begin{cases}
    h_\l(\pi(k)), &\text{ if }\pi^{-1}(u_\varepsilon)<k\text{ or  if } \pi^{-1}(u_\varepsilon)=k\text{ and } \pi^{-1}(u_{1-\varepsilon})<k,\\
    \lambda_\l^\varepsilon(e), &\text{otherwise.}
    \end{cases}\\
\intertext{and}
    \lambda_\l^{\varepsilon,k:-}(e)&=
    \begin{cases}
    h_\l(\pi(k)), &\text{ if }\pi^{-1}(u_\varepsilon)>k\text{ or  if } \pi^{-1}(u_\varepsilon)=k\text{ and } \pi^{-1}(u_{1-\varepsilon})>k,\\
    \lambda_\l^\varepsilon(e), &\text{otherwise.}
    \end{cases}
\end{align}

For $k\in [|V(\s)|]$, let $\l_1,\l_2 \in \mathcal{L}_{\s}^{(\pi,\zeta)}$ be such that $\Pi^{k:+}(\l_1)=\Pi^{k:+}(\l_2)$, and denote $\t$ their common truncation. Note that, for any edge $e$, $\lambda_{\l_1}^{\varepsilon,k:+}(e)=\lambda_{\l_2}^{\varepsilon,k:+}(e)$. We then define  $\lambda_{\t}^{\varepsilon}(e)$ as this common value. Similarly, if $\t=\Pi^{k:-}(\l_1)=\Pi^{k:-}(\l_2)$, we set $\lambda_{\t}^{\varepsilon}(e):=\lambda_{\l_1}^{\varepsilon,k:-}(e)=\lambda_{\l_2}^{\varepsilon,k:-}(e)$.

\smallskip

The definition of $\Eup$ and $\Edo$ first given in~Section~\ref{sec:brancheMotz} can then be naturally extended to truncations and for $\t\in \mathcal{T}_{\s}^{(\pi,\zeta)}$, we set:
\begin{equation}
\Eup(\t)=\Big\{e\in E(\t)\,|\,\lambda_\t^{0}(e)\leq \lambda_\t^{1}(e)\Big\}\quad \text{ and }\quad \Edo(\t)=\Big\{e\in E(\t)\,|\,\lambda_\t^{0}(e)> \lambda_\t^{1}(e)\Big\}.
\end{equation}

Finally for $\pm \in \{-,+\}$, we define the following family of generating series: 
\begin{equation}\label{eq:defSGen}
    S_{(\pi,\zeta)}^{k:\pm}(D_\bullet,D_\circ)=
    \sum_{\t\in \mathcal{T}_{(\pi,\zeta)}^{k:\pm}}\left(
     \prod_{e\in \Eup(\t)}\Delta_{\lambda_\t^{0}(e)}^{\lambda_\t^{1}(e)}(D_{\bullet},D_\circ)
    \prod_{e\in \Edo(\t)}\Delta_{\lambda_\t^{1}(e)}^{\lambda_\t^{0}(e)}(D_\circ,D_{\bullet})\right).
\end{equation}

Observe that $S_{(\pi,\zeta)}^{1:+}(D_\bullet,D_\circ)=S_{(\pi,\zeta)}^{|V(s)|:-}=S_\s^{(\pi,\zeta)}$, so that Proposition~\ref{prop:MirrorUltimeS} is in fact a particular case of the following result: 
\begin{proposition}\label{prop:MirrorTruncationUltime}
For any $k\in [|V(\s)|]$, we have: 
\begin{equation}\overline{\left(S^{k:+}_{(\pi,\zeta)}(D_\bullet,D_\circ)+S^{k:+}_{(\pi,\zeta)}(D_\circ,D_\bullet)\right)}^\parallel =(-1)^{|V(\s)|-1}\left(S^{\bar k+1:-}_{\overline{(\pi,\zeta)}}(D_\bullet,D_\circ)+S^{\bar k+1:-}_{\overline{(\pi,\zeta)}}(D_\circ,D_\bullet)\right)
\end{equation}\end{proposition}

This proposition will be proven in Section~\ref{sub:mirrorTrunc} and its proof relies on the following recursive decomposition of $S_{(\pi,\zeta)}^{k:\pm}$:

\begin{lemma}\label{lem:bivEnumOrdered}
The family of functions $\Big(S_{(\pi,\zeta)}^{k:\pm},\text{ with }\pm\in\{-,+\},k\in [|V(\s)|]\Big)$ admits the following recursive characterization:
\begin{equation}
S_{(\pi,\zeta)}^{|V(\s)|:+}=1, \quad \text{ and }\quad
S_{(\pi,\zeta)}^{1:-}=1.
\end{equation}
Fix $k\in [|V(\s)-1|]$ and define $c_{(\pi,\zeta)}^k\in\{\bullet,\circ\}$ by:
\begin{equation}
c_{(\pi,\zeta)}^k:=\begin{cases}
\bullet &\text{if } h(\pi(k)) \text{ is even}\\
\circ & \text{otherwise.} 
\end{cases}
\end{equation}
Then, writing $c=c_{(\pi,\zeta)}^k$ and defining $-c$ as the opposite color, we have:
\begin{equation}
S_{(\pi,\zeta)}^{k:+}=\begin{cases}
\dfrac{(D_{c})^{\mathtt{O}_\pi^{k,\uparrow}}(D_{c})^{\mathtt{O}_\pi^{k,\downarrow}}}{(D_{c})^{U_\pi^{k,\uparrow}}(D_{c})^{U_\pi^{k,\downarrow}}}\cdot\dfrac{(D_\circ D_\bullet)^{\delta^{k}_{\pi}\cdot \mathtt{C}_{\pi}^{k}}}{1-(D_\circ D_\bullet)^{\mathtt{C}_{\pi}^{k}}}\cdot
S_{(\pi,\zeta)}^{k+1:+} & \text{if }\zeta(k)=0,\\
~&\\
\dfrac{(D_{-c})^{\mathtt{O}_\pi^{k,\uparrow}}(D_{c})^{\mathtt{O}_\pi^{k,\downarrow}}}{(D_{c})^{U_\pi^{k,\uparrow}}(D_{-c})^{U_\pi^{k,\uparrow}}}\cdot\dfrac{(D_c)^{\mathtt{C}_\pi^{k,\uparrow}}\cdot (D_{-c})^{\mathtt{C}_\pi^{k,\downarrow}}}{1-(D_\circ D_\bullet)^{\mathtt{C}_{\pi}^{k}}} 
\cdot
S_{(\pi,\zeta)}^{k+1:+} 
& \text{if }\zeta(k)=1.
\end{cases}
\end{equation}
and similarly: 
\begin{equation}
S_{(\pi,\zeta)}^{k+1:-}=\begin{cases}
\dfrac{(D_{c})^{\mathtt{O}_\pi^{k,\uparrow}}(D_{c})^{\mathtt{O}_\pi^{k,\downarrow}}}{(D_{c})^{U_\pi^{k,\uparrow}}(D_{c})^{U_\pi^{k,\downarrow}}}\cdot\dfrac{(D_\circ D_\bullet)^{\delta^{k}_{\pi}\cdot \mathtt{C}_{\pi}^{k}}}{1-(D_\circ D_\bullet)^{\mathtt{C}_{\pi}^{k}}}\cdot
S_{(\pi,\zeta)}^{k:-} & \text{if }\zeta(k)=0,\\
~&\\
\dfrac{(D_{-c})^{\mathtt{O}_\pi^{k,\uparrow}}(D_{c})^{\mathtt{O}_\pi^{k,\downarrow}}}{(D_{c})^{U_\pi^{k,\uparrow}}(D_{-c})^{U_\pi^{k,\uparrow}}}\cdot\dfrac{(D_c)^{\mathtt{C}_\pi^{k,\uparrow}}\cdot (D_{-c})^{\mathtt{C}_\pi^{k,\downarrow}}}{1-(D_\circ D_\bullet)^{\mathtt{C}_{\pi}^{k}}} 
\cdot
S_{(\pi,\zeta)}^{k:-} 
& \text{if }\zeta(k)=1.
\end{cases}
\end{equation}
The quantities $\mathtt{C}_{\pi}^{k,\uparrow}$ and $\mathtt{C}_{\pi}^{k,\downarrow}$ are introduced in Definition~\ref{def:Ek}, and the quantities $\mathtt{O}_\pi^{k,\uparrow}$, $\mathtt{O}_\pi^{k,\downarrow}$, $\mathtt{U}_\pi^{k,\uparrow}$ and $\mathtt{U}_\pi^{k,\downarrow}$ in Definition~\ref{def:OkUk} in the course of the proof.

\end{lemma}

\smallskip

\begin{proof}
From the definition of ascending and descending truncation, the sets $\mathcal{T}_{(\pi,\zeta)}^{|V(\s)|:+}$ and $\mathcal{T}_{(\pi,\zeta)}^{1:-}$ are all reduced to a single element with one vertex and no edge. 
The initialization cases follow easily.
\smallskip

For any $e\in E(\t)$, we investigate its contribution to $S_{(\pi,\zeta)}^{k:+}$. If both extremities of $e$ have rank greater than $k$, then 
the contribution of $e$ to $S_{(\pi,\zeta)}^{k:+}$ is the same as its contribution to $S_{(\pi,\zeta)}^{k+1:+}$. So that, only edges contributing to $\mathtt{C}_\pi^k$ (see Definition~\ref{def:Ck}) need to be studied. We need the following refinement of $\mathtt{C}_\pi^k$:

\begin{definition}\label{def:Ek}
For any $k\in |V(\s)|$, we partition the set of edges of $\s$ contributing to $\mathtt{C}_\pi^k$ into two sets, depending on whether they are increasing or decreasing. More precisely, for $e=\{u,v\}\in E(\s)$ oriented from $u$ to $v$, we set: 
\begin{equation}
\begin{cases}
e\in E_{\pi}^{k,\uparrow} & \text{ if }\pi^{-1}(u)\leq k<\pi^{-1}(v)\\
e\in E_{\pi}^{k,\downarrow} & \text{if }\pi^{-1}(v)\leq k<\pi^{-1}(u).
\end{cases}
\end{equation}
By a slight abuse of notation, we identify elements of $E_{\pi}^{k,\uparrow}$ and of $E_{\pi}^{k,\downarrow}$ with their image in $\Pi^{k:+}(\s)$ or in $\Pi^{k+1:-(\s)}$, and define $\mathtt{C}_{\pi}^{k,\uparrow}:=|E_{\pi}^{k,\uparrow}|$ and $\mathtt{C}_{\pi}^{k,\downarrow}:=|E_{\pi}^{k,\downarrow}|$.
\end{definition}

We start with the case where $e\in E_{\pi}^{k,\uparrow}$. Its contribution can be decomposed into $w_1(e):=\Delta_{\lambda_t^0(e)}^{h_\t(\pi(k+1))}(D_\bullet,D_\circ)$\footnote{We slightly abuse notation here since $\Delta_i^j$ is only defined for $i\leq j$, and we might have $\lambda_t^0(e)=h_\t(\pi(k+1))+1$, in that case we set $w_1(e):=1/D_{c}$, which corresponds to $\zeta_k=0$ and $J_k=0$ in~\eqref{eq:w1Shifted}.} and $w_2(e):=\Delta_{h_\t(\pi(k+1))}^{\lambda_t^1(e)}(D_\bullet,D_\circ)$. We first compute $w_1(e)$. Set:
\[
J_k\coloneqq h_\l\Big(\pi(k+1)\Big)-h_\l\Big(\pi(k)\Big). 
\]
If $\zeta(k)=1$, $J_k$ can be any positive odd integer. Whereas, if $\zeta(k)=0$, $J_k$ can be any even nonnegative integer if $\nu(\pi(k))<\nu(\pi(k+1))$ or any even integer greater than 1 otherwise (equivalently, $J_k$ is an even integer not smaller than $\delta^{k}_{\pi}$, as defined in~\eqref{eq:defDelta}).

If the half-edge of $e$ incident to $\pi(k)$ is unshifted, i.e. if $\lamZ_\t(e)=h_\t(\pi(k))$, we have:
\begin{equation}
w_1(e)=
\begin{cases}(D_cD_{-c})^{J_k/2}&\text{ if }\zeta_k=0\\
D_{c}(D_cD_{-c})^{(J_k-1)/2}&\text{ if }\zeta_k=1.
\end{cases}
\end{equation}
Otherwise, i.e. if $\lamZ_\t(e)=h_\t(\pi(k))+1$, we need to divide the above formula by $D_{c}$. In that case, we obtain: 
\begin{equation}\label{eq:w1Shifted}
w_1(e)=
\begin{cases}
D_{-c}(D_cD_{-c})^{J_k/2-1}&\text{ if }\zeta_k=0\\
(D_cD_{-c})^{(J_k-1)/2}&\text{ if }\zeta_k=1.
\end{cases}
\end{equation}
We now move to $w_2(e)$. If $e$ is incident to $\pi(k+1)$, then $w_2(e)=1$ if $\lambda_\t^1(e)=h_\t(\pi(k+1))$,    and otherwise is equal to: 
\[
    w_2(e)= \begin{cases}
    D_c&\text{ if }\zeta(k)=0,\\
    D_{-c}&\text{ if }\zeta(k)=1.
    \end{cases}
\] 
In this case, the image of $e$ in $\Pi^{k+1:+}(\t)$ is a self-loop and is deleted by definition of the truncation. Otherwise, $w_2(e)$ is exactly equal to the contribution to $S_{(\pi,\zeta)}^{k+1:+}$ of the image of $e$ in $\Pi^{k+1:+}(\t)$.
\smallskip

If $e\in  E_{\pi}^{k,\downarrow}$, the exact same analysis can be carried out and we obtain the same contributions up to replacing $c$ by $-c$ everywhere. 

Recall the definitions of $E_\pi^{k,\uparrow}$ and $E_\pi^{k,\downarrow}$ and of $\mathtt{O}_{\pi}$ and $\mathtt{U}_{\pi}$ given respectively in Definition~\ref{def:Ek} and in Definition~\ref{def:OkUk}. We need to introduce the following refinement of $\mathtt{O}_\pi^{k}$ and $\mathtt{U}_\pi^{k}$:
\begin{definition}\label{def:OkUk}
For $k\in [|V(\s)|-1]$, define respectively $\mathtt{O}_\pi^{k,\uparrow}$ and $\mathtt{U}_\pi^{k,\uparrow}$ as the number of edges $\{u,v\}$ in $E_\pi^{k,\uparrow}$ such that the half-edge incident to $u$ is shifted, and such that $\pi(v)<\pi(u)=k+1$ (respectively $k=\pi(u)<\pi(v)$).\\
We define similarly $\mathtt{O}_\pi^{k,\downarrow}$ and $\mathtt{U}_\pi^{k,\downarrow}$.
\end{definition}
Summarizing the contributions of all edges in $\t$, and summing over $\t$, and over all possible values of $J_k$ we get: 

\noindent For $\zeta(k)=1$: 
\begin{align}
\frac{S_{(\pi,\zeta)}^{k:+}}{S_{(\pi,\zeta)}^{(k+1):+}}
&= 
     \sum_{i\geq 0}\left(
     \frac{(D_{-c})^{\mathtt{O}_\pi^{k,\uparrow}}}{(D_{c})^{U_\pi^{k,\uparrow}}}\left(\prod_{e\in  E_{\pi}^{k,\uparrow}}
     D_{c}(D_\circ D_{\bullet})^i\right)\cdot
     \frac{(D_{c})^{\mathtt{O}_\pi^{k,\downarrow}}}{(D_{-c})^{U_\pi^{k,\downarrow}}}\left(\prod_{e\in  E_{\pi}^{k,\downarrow}}
     (D_{-c})(D_\circ D_{\bullet})^i\right)\right)\\
     &=\frac{(D_{-c})^{\mathtt{O}_\pi^{k,\uparrow}}(D_{c})^{\mathtt{O}_\pi^{k,\downarrow}}}{(D_{c})^{U_\pi^{k,\uparrow}}(D_{-c})^{U_\pi^{k,\uparrow}}}\cdot\frac{(D_c)^{\mathtt{C}_\pi^{k,\uparrow}}\cdot (D_{-c})^{\mathtt{C}_\pi^{k,\downarrow}}}{1-(D_\circ D_\bullet)^{\mathtt{C}_{\pi}^{k}}} 
\end{align}

For $\zeta(k)=0$, recall that $J_k$ can take any even value not smaller than $\delta^{k}_{\pi}$. Hence, we get:
\begin{equation}
\frac{S_{(\pi,\zeta)}^{k:+}}{S_{(\pi,\zeta)}^{(k+1):+}}
=\frac{(D_{c})^{\mathtt{O}_\pi^{k,\uparrow}}(D_{c})^{\mathtt{O}_\pi^{k,\downarrow}}}{(D_{c})^{U_\pi^{k,\uparrow}}(D_{c})^{U_\pi^{k,\uparrow}}}\cdot\frac{(D_\circ D_\bullet)^{\delta^{k}_{\pi}\cdot \mathtt{C}_{\pi}^{k}}}{1-(D_\circ D_\bullet)^{\mathtt{C}_{\pi}^{k}}} 
\end{equation}

The other recurrence relation (for descending truncations) is obtained similarly.
\end{proof}

\subsection{Proof of Propositions~\ref{prop:MirrorUltimeS} and~\ref{prop:MirrorTruncationUltime} via truncations}\label{sub:mirrorTrunc}
\begin{lemma}\label{lem:bivSymMir}
Let $(\pi,\zeta)\in \mathfrak{S}_{\s}^{\ptdeux}$, and $k\in [|V(\s)|-1]$. Then, we have: 
\begin{equation}
\overline{
\left(
S_{(\pi,\zeta)}^{k:+}\Big/
S_{(\pi,\zeta)}^{k+1:+}
\right)}^\parallel=
-
\left(S_{\overline{(\pi,\zeta)}}^{\overline{k}+1:-}\Big/S_{\overline{(\pi,\zeta)}}^{\overline{k}:-}\right)\quad\text{if }\zeta(k)=0,
\end{equation}
and
\begin{equation}
\overline{
\left(
S_{(\pi,\zeta)}^{k:+}\Big/
S_{(\pi,\zeta)}^{k+1:+}
\right)}^\times=
-
\left(S_{\overline{(\pi,\zeta)}}^{\overline{k}+1:-}\Big/S_{\overline{(\pi,\zeta)}}^{\overline{k}:-}\right) \quad\text{if }\zeta(k)=1.
\end{equation}
\end{lemma}

Before proving this lemma, let us first comment on why it concludes the proof of Proposition~\ref{prop:MirrorTruncationUltime} and hence of Proposition~\ref{prop:MirrorUltimeS}. 

We proceed by induction. The result is clear for $k=|V(\s)|$. Indeed, by Lemma~\ref{lem:bivEnumOrdered} we have: 
\begin{equation}
S_{(\pi,\zeta)}^{|V(\s)|:+}=1=
S_{(\pi,\zeta)}^{1:-}.
\end{equation}
Then, recall that for any function $f(x,y)$ of two variables, we write $f^{\yy}:=f(x,y)+f(y,x)$ for the symmetrization of $f$. It follows from Lemma~\ref{lem:bivSymMir} that for any $k\in [|V(\s)|-1]$, we have:
\begin{equation}
\overline{
\left(
S_{(\pi,\zeta)}^{k:+}\Big/
S_{(\pi,\zeta)}^{k+1:+}
\right)^{\yy}}^\parallel=-
\left(S_{\overline{(\pi,\zeta)}}^{\overline{k}+1:-}\Big/S_{\overline{(\pi,\zeta)}}^{\overline{k}:-}\right)^{\yy},
\end{equation}
which concludes the proof of Proposition~\ref{prop:MirrorTruncationUltime}.

The key ingredient of the proof of Lemma~\ref{lem:bivSymMir} is the following extension and refinement of Fact~\ref{fact:mirrorUniv}, which follows directly from Definitions~\ref{def:Ek} and~\ref{def:OkUk}:
\begin{fact}\label{fact:mirroBiv}
Let $(\pi,\zeta)\in \mathfrak{S}_{\s}^{\ptdeux}$, and $k\in [|V(\s)|-1]$. Then, we have: 
\begin{equation}
\mathtt{O}_{\pi}^{k,\uparrow}=\mathtt{U}_{\overline{\pi}}^{\bar k,\downarrow},\quad\text{and}\quad\mathtt{O}_{\pi}^{k,\downarrow}=\mathtt{U}_{\overline{\pi}}^{\bar k,\uparrow}
\end{equation}
and
\begin{equation}
\mathtt{C}_{\pi}^{k,\uparrow}=\mathtt{C}_{\overline{\pi}}^{\bar k,\downarrow}\quad\text{and}\quad\mathtt{C}_{\pi}^{k,\downarrow}=\mathtt{C}_{\overline{\pi}}^{\bar k,\uparrow}.
\end{equation}
\end{fact}

\begin{proof}[Proof of Lemma~\ref{lem:bivSymMir}]
We only treat the case where $\zeta(k)=1$, the case $\zeta(k)=0$ being similar and closer to the univariate setting.
As in Lemma~\ref{lem:bivEnumOrdered}, set $c=c_{(\pi,\zeta)}^k$, then we can write: 
\begin{align}
\overline{
\left(
S_{(\pi,\zeta)}^{k:+}\Big/
S_{(\pi,\zeta)}^{k+1:+}
\right)}^\times
&=\overline{
\left(
\dfrac{
(D_{-c})^{\mathtt{O}_\pi^{k,\uparrow}}
(D_{c})^{\mathtt{O}_\pi^{k,\downarrow}}
}{
(D_{c})^{\mathtt{U}_\pi^{k,\uparrow}}
(D_{-c})^{\mathtt{U}_\pi^{k,\downarrow}}}
\cdot\dfrac{(D_c)^{\mathtt{C}_\pi^{k,\uparrow}}\cdot (D_{-c})^{\mathtt{C}_\pi^{k,\downarrow}}}{1-(D_\circ D_\bullet)^{\mathtt{C}_{\pi}^{k}}} 
\right)}^\times\\
&=\dfrac{
(D_{-c})^{\mathtt{U}_\pi^{k,\uparrow}}
(D_{c})^{\mathtt{U}_\pi^{k,\downarrow}}
}
{
(D_{c})^{\mathtt{O}_\pi^{k,\uparrow}}
(D_{-c})^{\mathtt{O}_\pi^{k,\downarrow}}
}
\cdot\dfrac{(D_{-c})^{-\mathtt{C}_\pi^{k,\uparrow}}\cdot (D_{c})^{-\mathtt{C}_\pi^{k,\downarrow}}}{1-(D_\circ D_\bullet)^{-\mathtt{C}_{\pi}^{k}}} 
\intertext{Applying Fact~\ref{fact:mirroBiv} and observing that $\mathtt{C}_\pi^{k,\uparrow}+\mathtt{C}_\pi^{k,\downarrow}=\mathtt{C}_\pi^{k}$, we can rewrite:}
\overline{
\left(
S_{(\pi,\zeta)}^{k:+}\Big/
S_{(\pi,\zeta)}^{k+1:+}
\right)}^\times
&=
\dfrac{
(D_{-c})^{\mathtt{O}_{\bar\pi}^{\bar k,\downarrow}}
(D_{c})^{\mathtt{O}_{\bar\pi}^{\bar k,\uparrow}}
}{
(D_{c})^{\mathtt{U}_{\bar\pi}^{\bar k,\downarrow}}
(D_{-c})^{\mathtt{U}_{\bar\pi}^{\bar k,\uparrow}}}
\cdot\dfrac{(D_c)^{\mathtt{C}_{\bar\pi}^{\bar k,\downarrow}}\cdot (D_{-c})^{\mathtt{C}_{\bar\pi}^{\bar k,\uparrow}}}{(D_\circ D_\bullet)^{\mathtt{C}_{{\bar\pi}}^{\bar k}}-1}.
\end{align}
Since $\zeta(k)=1$, $h(\pi^{-1}(k))$ and $h(\bar \pi^{-1}(\bar k))$ have opposite parity, so that $c^{\bar k}_{\overline{(\pi,\zeta)}}=-c$. We can then apply Lemma~\ref{lem:bivSymMir} to get a recurrence  relation between $S_{\overline{(\pi,\zeta)}}^{\overline{k}+1:-}$ and $S_{\overline{(\pi,\zeta)}}^{\overline{k}:-}$, which yields the desired result. 
\end{proof} 

\subsection{Proof of Theorem~\ref{thm:MirrorR}}\label{sub:proofMirrorR}

By \cref{lem:decompCore}, we can write:
\begin{equation}\label{eq:Rfinal}
    R_{\s}^{(\pi,\zeta)}(D_\bullet,D_\circ)= B^{|E(\s)|} \cdot S_\s^{(\pi,\zeta)}\cdot \sum_{\l\in \mathcal{L}_\s^{(\pi,\zeta)}}\prod_{r\in \bar S(\l)}\Delta_{\lambda_\l(r)}^{\lambda_\l(r)+1}(t_\bullet,t_\circ),
\end{equation}
where we recall that $\bar S(\l)$ denotes the set of rootable scheme stems of $\l$. It follows from~Lemma~\ref{lem:decompCore}, that a rootable stem $r$ contributes to a weight 
$t_{\bullet}$ if $h_\l(\v(r))+\lambda_\s(r)$ is even, and $t_{\circ}$ otherwise.
Then the total contribution of the rootable stems in $R_{\s}^{(\pi,\zeta)}$ is given by: 
\begin{equation}
\mathtt{P}_{(\pi,\zeta)}:=\sum_{\l \in \mathcal{L}_{\s}^{(\pi,\zeta)}}\prod_{r\in \bar S(\l)}
\begin{cases}
t_\bullet,&\text{ if }h_\l(\pi^{-1}(1))+\sum_{i=1}^{\pi^{-1}(\v(r)-1)}\zeta(i) + \lambda_\s(r) \text{ is even}, \\
t_\circ,&\text{ otherwise.}
\end{cases}
\end{equation}
For any $\bar \l \in \mathcal{L}_{\s}^{\overline{(\pi,\zeta)}}$, the parity of the height of vertices in $\bar \l$ are the same as in any $\l \in \mathcal{L}_{\s}^{(\pi,\zeta)}$, more formally: $h_\l(\pi(k))$ and $h_{\bar\l}(\bar\pi(\bar k+1))$ have the same parity for any $k\in [|V(\s)|]$. Hence, we have: $\mathtt{P}_{(\pi,\zeta)}=\mathtt{P}_{\overline{(\pi,\zeta)}}$. By~\eqref{eq:tbul} and~\eqref{eq:tcir}, $t_\circ$ and $t_\bullet$ are $\times$-symmetric, so that we also have:
\begin{equation}
\overline{\mathtt{P}_{(\pi,\zeta)}}^\times=\mathtt{P}_{\overline{(\pi,\zeta)}}, \quad \text{ and hence }\quad\overline{\mathtt{P}_{(\pi,\zeta)}^\yy}^\times=\mathtt{P}_{\overline{(\pi,\zeta)}}^\yy.
\end{equation}
Next, recall the expression of $B$ given in~\eqref{eq:BD}, from which it follows that: 
\[
\overline{B(D_\bullet,D_{\circ})}^\parallel=\overline{B(D_\bullet,D_{\circ})}^\times=-B(D_\circ,D_{\bullet}).
\]
Combined with Proposition~\ref{prop:MirrorUltimeS}, this gives: 
\begin{equation}
\overline{\left(R_{\s}^{(\pi,\zeta)}\right)^{\yy}}^\parallel =(-1)^{|E(\s)|+|V(\s)|-1}\cdot \left(R_{\s}^{\overline{(\pi,\zeta)}}\right)^{\yy}.
\end{equation}
By Euler's formula, for any unlabeled scheme, we have $|E(\s)|+|V(\s)|-1=2g$. And, since 
\[
    \left(R_{\s}^{\pi}\right)^{\yy}=\sum_{\zeta:[|V(\s)|-1]\rightarrow \mathbb{Z}/2\mathbb{Z}}\left(R_{\s}^{(\pi,\zeta)}\right)^{\yy},
\]
this concludes the proof.

\normalsize

\bibliographystyle{amsplain}
\bibliography{faceSommet}

\providecommand{\bysame}{\leavevmode\hbox to3em{\hrulefill}\thinspace}
\providecommand{\MR}{\relax\ifhmode\unskip\space\fi MR }
\providecommand{\MRhref}[2]{%
  \href{http://www.ams.org/mathscinet-getitem?mr=#1}{#2}
}
\providecommand{\href}[2]{#2}
\begin{thebibliography}{10}

\bibitem{Arq85}
D.~Arques, \emph{Une relation fonctionnelle nouvelle sur les cartes planaires
  point{\'e}es}, J. Combin. Theory Ser. B \textbf{39} (1985), no.~1, 27--42.

\bibitem{Arq87}
D.~Arqu{\`e}s, \emph{Relations fonctionnelles et d{\'e}nombrement des cartes
  point{\'e}es sur le tore}, J. Combin. Theory Ser. B \textbf{43} (1987),
  no.~3, 253--274.

\bibitem{ArqGio99}
D.~Arques and A.~Giorgetti, \emph{{\'E}num{\'e}ration des cartes point{\'e}es
  sur une surface orientable de genre quelconque en fonction des nombres de
  sommets et de faces}, J. Combin. Theory Ser. B \textbf{77} (1999), no.~1,
  1--24.

\bibitem{BenCan86}
E.A. Bender and E.R. Canfield, \emph{The asymptotic number of rooted maps on a
  surface}, J.~Combin.~Theory~Ser.~A \textbf{43} (1986), no.~2, 244--257.

\bibitem{BenCan91}
\bysame, \emph{The number of rooted maps on an orientable surface}, J. Combin.
  Theory Ser. B \textbf{53} (1991), no.~2, 293--299.

\bibitem{BeCaRi93}
E.A. Bender, E.R. Canfield, and L.B Richmond, \emph{The asymptotic number of
  rooted maps on a surface.: {II}. {E}numeration by vertices and faces}, J.
  Combin. Theory Ser. A \textbf{63} (1993), no.~2, 318--329.

\bibitem{BonzomChapuyDolega}
Valentin Bonzom, Guillaume Chapuy, and Maciej Do{\l}{\k{e}}ga,
  \emph{Enumeration of non-oriented maps via integrability}, Algebraic
  Combinatorics \textbf{5} (2022), no.~6, 1363--1390.

\bibitem{BDG04}
J.~Bouttier, {Ph.} {Di Francesco}, and E.~Guitter, \emph{Planar maps as labeled
  mobiles}, Electron. J. Combin \textbf{11} (2004), no.~1, R69.

\bibitem{CarCha15}
S.R. Carrell and G.~Chapuy, \emph{Simple recurrence formulas to count maps on
  orientable surfaces}, J.~Combin.~Theory~Ser.~A \textbf{133} (2015), 58--75.

\bibitem{ChMaSc09}
G.~Chapuy, M.~Marcus, and G.~Schaeffer, \emph{A bijection for rooted maps on
  orientable surfaces}, SIAM J. Discrete Math. \textbf{23} (2009), no.~3,
  1587--1611.

\bibitem{CorVau81}
R.~Cori and B.~Vauquelin, \emph{Planar maps are well labeled trees}, Canad. J.
  Math. \textbf{33} (1981), no.~5, 1023--1042.

\bibitem{DolegaLepoutre}
Maciej Do{\l}{\k{e}}ga and Mathias Lepoutre, \emph{Blossoming bijection for
  bipartite pointed maps and parametric rationality of general maps of any
  surface}, Adv. Appl. Math. \textbf{141} (2022), 102408.

\bibitem{Eyna16}
B.~Eynard, \emph{Counting surfaces}, vol.~70, Springer, 2011.

\bibitem{FlaSed}
{Ph}. Flajolet and R.~Sedgewick, \emph{Analytic combinatorics}, Cambridge
  University press, 2009.

\bibitem{LanZvo04}
S.K. Lando and A.K. Zvonkin, \emph{{Graphs on surfaces and their applications.
  Appendix by Don B. Zagier.}}, Springer, 2004.

\bibitem{Lep19}
M.~Lepoutre, \emph{Blossoming bijection for higher-genus maps},
  J.~Combin.~Theory~Ser.~A \textbf{165} (2019), 187--224.

\bibitem{LepoutrePhD}
{M}. Lepoutre, \emph{Blossoming bijections, multitriangulations: What about
  other surfaces?}, Ph.D. thesis, University of Paris-Saclay, France, 2019.

\bibitem{HDRLeveque}
B.~L{\'e}v{\^e}que, \emph{Generalization of {S}chnyder woods to orientable
  surfaces and applications}, Habilitation \`a diriger des recherches,
  Université Grenoble Alpes, 2017.

\bibitem{Propp93}
J.~Propp, \emph{Lattice structure for orientations of graphs}, arXiv preprint
  math/0209005 (2002).

\bibitem{Scha97}
G.~Schaeffer, \emph{Bijective census and random generation of {E}ulerian planar
  maps with prescribed vertex degrees}, Electron. J. Combin. \textbf{4} (1997),
  no.~1, 20.

\bibitem{Scha98}
\bysame, \emph{Conjugaison d'arbres et cartes combinatoires al{\'e}atoires},
  Ph.D. thesis, Bordeaux 1, 1998.

\bibitem{Scha15}
\bysame, \emph{{Planar Maps}}, Handbook of {E}numerative {C}ombinatorics
  (M~B{\'{o}}na, ed.), vol.~87, CRC Press, 2015.

\bibitem{Tut62}
W.T. Tutte, \emph{A census of slicings}, Canad. J. Math. \textbf{14} (1962),
  708--722.

\bibitem{Tut63}
\bysame, \emph{A census of planar maps}, Canad. J. Math. \textbf{15} (1963),
  249--271.

\bibitem{WalshLehmanI}
T.~Walsh and A.~Lehman, \emph{Counting rooted maps by genus. {I}},
  J.~Combin.~Theory~Ser.~B \textbf{13} (1972), no.~3, 192--218.

\bibitem{WalshLehmanII}
\bysame, \emph{Counting rooted maps by genus {II}}, J.~Combin.~Theory~Ser.~B
  \textbf{13} (1972), no.~2, 122--141.

\end{thebibliography}

\end{document}